\numberwithin{equation}{section}
\newcommand{\Epsilon}{\mathcal{E}}
\DeclareMathOperator{\Tr}{Tr}
\newcommand\ddfrac[2]{\frac{\displaystyle #1}{\displaystyle #2}}
\newcommand{\norm}[1]{\left\lVert#1\right\rVert}
\newcommand{\opnormtwo}[1]{\norm{#1}_{\ell^2\to\ell^2}}
\newcommand{\opnorminf}[1]{\norm{#1}_{\ell^\infty\to\ell^\infty}}
\newcommand{\m}{\vect{m}}
\newcommand{\supp}[1]{\operatorname{supp}(#1)}
\DeclareMathOperator{\im}{Im}
\DeclareMathOperator{\re}{Re}
\DeclareMathOperator{\prob}{\mathbb{P}}
\DeclareMathOperator{\Expv}{\mathbb{E}}
\DeclareMathOperator{\dist}{dist}
\DeclareMathOperator{\sign}{sign}
\newcommand{\Gap}[1]{\operatorname{Gap}\left(#1\right)}
\newcommand{\diag}[1]{\operatorname{diag}\left(#1\right)}
\newcommand{\bigO}[1]{\mathcal{O}\left(#1\right)}
\newcommand{\smallo}[1]{o\left(#1\right)}
\newcommand{\Oprec}{\mathcal{O}_\prec}
\newcommand{\E}[1]{\Expv\left[#1 \right]}
\newcommand{\Prob}[1]{\prob\left[#1 \right]}
\newcommand{\other}[1]{\widetilde{#1}}
\newcommand{\G}{\mathcal{G}}
\newcommand{\X}{X}
\newcommand{\Y}{Y}
\newcommand{\s}{\vect{s}}
\newcommand{\g}{\vect{g}}
\newcommand{\al}{\mathfrak{a}}
\newcommand{\be}{\mathfrak{b}}
\newcommand{\vv}{\vect{v}}
\newcommand{\eigF}{\psi} %TODO:: come up with a better notation
\newcommand{\eps}{\hat\varepsilon}
\newcommand{\alp}{{\varepsilon_0/2}} %This exists because I think alpha is no longer needed
\newcommand{\twoalp}{\varepsilon_0} %This exists because I think alpha is no longer needed
\newcommand{\dom}{\Omega_0}
\newcommand{\vect}[1]{\mathbf{#1}}
\newcommand{\bulk}{\mathcal{I}}
\newcommand{\I}{i}
\newcommand{\stab}{B}
\newcommand{\annulus}{\mathcal{A}}
\newcommand{\Cmlnt}{\mathcal{C}}
\DeclareFontFamily{OT1}{pzc}{}
\DeclareFontShape{OT1}{pzc}{m}{it}{ <-> s*[1.1] pzcmi7t }{}
\DeclareMathAlphabet{\mathpzc}{OT1}{pzc}{m}{it}
\newcommand{\flucG}{\mathpzc{g}}
\newcommand{\flucRHS}{\mathpzc{r}}
\newcommand{\Ff}[2]{\mathcal{F}^{#1}_{#2}}
\newtheorem{theorem}{Theorem}
\numberwithin{theorem}{section}
\newtheorem{lemma}[theorem]{Lemma}
\newtheorem{Def}[theorem]{Definition}
\newtheorem{prop}[theorem]{Proposition}
\newtheorem{claim}[theorem]{Claim}
\newtheorem{remark}[theorem]{Remark}
\newtheorem{corollary}[theorem]{Corollary}
\newcommand{\new}{}%\color{blue}}
\newcommand{\old}{\color{black}}
\begin{document}
\begin{minipage}{0.85\textwidth}
	\vspace{2.5cm}
\end{minipage}
\begin{center}
	\large\bf Mesoscopic eigenvalue statistics for Wigner-type matrices
	%OLD::Central Limit Theorem for Mesoscopic Linear Eigenvalue Statistics of Wigner-type Matrices
\end{center}

\vspace{0.5cm}

\renewcommand{\thefootnote}{\fnsymbol{footnote}}	
\begin{center}
	Volodymyr Riabov\footnotemark[1]\\
	\footnotesize 
	{Institute of Science and Technology Austria}\\
	{\it volodymyr.riabov@ist.ac.at}
\end{center}

\bigskip

\footnotetext[1]{Supported by the ERC Advanced Grant "RMTBeyond" No.~101020331}
\renewcommand*{\thefootnote}{\arabic{footnote}}
\vspace{0.5cm}

\begin{center}
	\begin{minipage}{0.91\textwidth}\footnotesize{
			{\bf Abstract.}}
		We prove a universal mesoscopic central limit theorem for linear eigenvalue statistics of a Wigner-type matrix inside the bulk of the spectrum with compactly supported twice continuously differentiable test functions. The main novel ingredient is an optimal local law for the two-point function $T(z, \zeta)$  and a general class of related quantities involving two resolvents	at nearby spectral parameters. 
	\end{minipage}
\end{center}

\vspace{0.8cm}

{\small
	\footnotesize{\noindent\textit{Date}: \today}\\
	\footnotesize{\noindent\textit{Keywords and phrases}: Wigner-type matrix, mesoscopic eigenvalue statistics, central limit theorem}\\
	\footnotesize{\noindent\textit{2010 Mathematics Subject Classification}: 60B20, 15B52}
}

\vspace{10mm}

\thispagestyle{headings}
\section{Introduction}
	In the study of the eigenvalue distribution of large random matrices, the most celebrated analog of the Law of Large Numbers is the Wigner semicircle law \cite{Wigner1957}. It states that the empirical density of eigenvalues converges to a deterministic limit known as the semicircle distribution $\rho_{sc}$. More explicitly, if $H$ is an $N\times N$ Wigner matrix and $f$ is a sufficiently smooth test function, then the \textit{linear eigenvalue statistics} $N^{-1}\Tr f(H)$ converge in probability to $\int_\mathbb{R}f(x)\rho_{sc}(x)\mathrm{d}x$ in the large $N$ limit.
		
	The  corresponding Central Limit Theorem (CLT) asserts that the asymptotic fluctuations of the linear eigenvalue statistics $\Tr f(H) - \E{\Tr f(H)}$ are Gaussian. The absence of the $N^{-1/2}$ normalization factor, appearing in the classical CLT, can be viewed as a manifestation of the strongly-correlated nature of the eigenvalues.
	For the special case of $f(x) = (x-z)^{-1}$ with $\im z \neq 0$, this result was obtained by Khorunzhy, Khoruzhenko and Pastur \cite{Khorunzhy1996}. Johansson obtained the CLT for invariant ensembles with arbitrary polynomial potentials in \cite{Johansson1998}.  In \cite{Bai2005}, Bai and Yao used martingale CLT to establish the result for Wigner matrices with analytic test functions.  The proof for bounded test functions $f$ with bounded derivatives appeared in the work of Lytova and Pastur \cite{Lytova2009indCLT}. In subsequent works, different moment conditions on the matrix and regularity conditions on the test function were studied extensively by many authors, e.g., \cite{Bao2016, Landon2022, Shcherbina2011, Sosoe2013}. 
	
	While fixed test functions represent macroscopic averaging in the spectrum, one can introduce $N$-dependent scaling and consider \textit{ scaled  test functions} of the form $f(x) = g(\eta_0^{-1}(x-E_0))$, where $E_0$ is a fixed reference energy in the bulk, $\eta_0 \equiv \eta_0(N) \ll 1$  is a scaling parameter, and $g$ is compactly supported. Then $\Tr f(H)$ involves only about $N\eta_0$ eigenvalues of $H$. In particular, on \textit{mesoscopic} scales, corresponding to $N^{-1}\ll\eta_0\ll 1$, the limiting variance is given by the square of the $\dot H^{1/2}$ norm of $g$.  Mesoscopic test functions were first studied by Boutet de Monvel and Khorunzhy in \cite{Khorunzhy1999} for the Gaussian Orthogonal Ensemble, with subsequent extension to real Wigner matrices in \cite{Boutet1999} with $N^{-1/8}\ll\eta_0\ll 1$.  
	In \cite{He2017WignerCLT}, He and Knowles proved the CLT for Wigner matrices with general mesoscopic test functions for all scaling parameters $N^{-1}\ll\eta_0\ll 1$.  
	
	The result was extended to ensembles of greater generality in the more recent works, see, e.g., \cite{Bao2020freesum} and \cite{Li2021deformed}.
	%Bao, Schnelli, and Xu proved the mesoscopic CLT for free sums of matrices in \cite{Bao2020freesum}, the result for deformed Wigner ensemble and sample covariance matrices was obtained in \cite{Li2021deformed} by Li, Schnelli, and Xu.  %TODO::END of new stuff
	In particular, Li and Xu obtained mesoscopic CLT for \textit{generalized Wigner matrices} \footnote[1]{
		Generalized Wigner matrices are characterized by a flat doubly-stochastic matrix of variances $S$. Unlike the Wigner case, the entries $S_{jk}$  are not assumed to be equal. The limiting eigenvalue distribution remains semicircular.
	} 
	in the bulk and at the spectral edge with $C^2_c$ test functions in the full range of scales \cite{Li2021genWigner}.

	Finally, Landon, Lopatto, and Sosoe proved the bulk CLT for the much more general ensemble of \textit{Wigner-type matrices} in \cite{Landon2021Wignertype} for two classes of $C^\infty$ test functions. 
	\new For a special class of globally supported regularized bump functions \old, the proof is performed via resolvent techniques for large scales and extended to the entire mesoscopic range using Dyson Brownian motion (DBM) dynamics. \new For the more conventional compactly supported \old scaled %$C^\infty_c$ 
	test functions, the bulk CLT is established on all mesoscopic scales $N^{-1} \ll \eta_0 \ll 1$ using a combination of DBM and Green's function comparison.  
 	
	Wigner-type matrices were first introduced in \cite{Ajanki2015Wignertype}; they have centered entries $H_{jk}$ independent up to the symmetry constraint $H=H^*$. The matrix of variances $S$, defined by $S_{jk} := \E{|H_{jk}|^2}$, is assumed to be \textit{flat}, i.e., $S_{jk}\sim N^{-1}$ and satisfy a piece-wise H\"older regularity condition (see \eqref{cond_B}). 
	
	 As the main step towards CLT in the present paper,  we prove the optimal averaged and entry-wise local laws  (Corollary \ref{cor_T_local_law})  for the two-point function $T$, defined by
	\begin{equation} \label{Tfunction}
		T_{xy}(z,\zeta) := \sum_{a\neq y} S_{xa} G_{ay}(z)G_{ya}(\zeta),\quad x,y \in \{1,\dots,N\},
	\end{equation}
	where $G(z)$ is the resolvent of $H$.
	The corresponding result in the simpler setting of generalized Wigner matrices was obtained in \cite{Li2021genWigner}. Using the optimal local law for $T(z,\zeta)$, we prove the bulk mesoscopic CLT for Wigner-type matrices in the full range of scales $N^{-1} \ll \eta_0 \ll 1$ 	for compactly supported $C^2$ scaled test functions (Theorem \ref{main_Th}).  Our proof relies entirely on resolvent methods, circumventing the DBM dynamics used in \cite{Landon2021Wignertype}.

	Understanding $T(z,\zeta)$ is the crucial ingredient for the CLT as it was realized in \cite{Landon2021Wignertype}. In fact, a suboptimal entry-wise local law for  $T_{xy}(z,\zeta)$  was proved in Proposition 5.1 of \cite{Landon2021Wignertype}. If one relies solely on resolvent methods, this local law provides sufficient control for mesoscopic CLT only on scales $\eta_0 \gg N^{-1/5}$. The main reason for this limitation is that the error term in \cite{Landon2021Wignertype} contains the norm of the inverted \textit{stability operator} (defined in \eqref{Smm_def}). In the present paper, we show that this factor can be removed by separating the \textit{destabilizing eigendirection} corresponding to the smallest eigenvalue of the stability operator.  Using this method, we prove a local law for a general class of  quantities involving two resolvents (Theorem \ref{th_T_local_law}) and deduce the optimal averaged and entry-wise local laws for $T(z,\zeta)$.  In particular, this allows us to obtain the CLT on all mesoscopic scales  without relying on DBM. 
	
	The main difficulty lies in the fact that the deterministic approximation of the resolvent for Wigner-type matrices is not a multiple of the identity matrix, contrary to the generalized Wigner case \cite{Li2021genWigner}. Consequently, the destabilizing direction is no longer parallel to the vector of ones, and generally, no closed-form expression is known for the corresponding eigenprojector.
	It is important to note that for the deformed Wigner matrices studied in \cite{Li2021deformed}, the deterministic approximation is also not a multiple of the identity, but \new $S_{jk} = N^{-1}$\old . Therefore, the two-point function can be expressed as the square of the resolvent and can be studied using the local law, similarly to the standard Wigner case.
	
	Instead of approximating the destabilizing direction to circumvent this difficulty, we use a contour integral representation for the eigenprojector. It allows us to extend the decomposition approach of \cite{Li2021genWigner} to the Wigner-type ensembles. This method benefits from yielding an integral representation for the variance on all mesoscopic scales, under weaker regularity conditions on the test function than in \cite{Landon2021Wignertype}, and relying only on resolvent methods. 
	
	%To compute the limiting variance, we estimate the eigenvalue corresponding to the destabilizing direction in the perturbative regime. Similar calculations were performed in \cite{Landon2021Wignertype} for the Hermitian \textit{saturated self-energy operator}, but we extend them to the non-Hermitian stability operator. 	
		
	 The paper is organized in the following way. Section \ref{result_sect} contains the precise definition of the model and the statement of our main \new mesoscopic CLT \old result, Theorem \ref{main_Th}. In Section \ref{main_tech_sect}, we present our main technical result, \new the optimal local law for two-point functions in \old Theorem \ref{th_T_local_law}. In Section \ref{prelim_sect}, we collect notations and preliminary results to which we refer throughout the paper. In Section \ref{proof_sect}, we deduce Theorem \ref{main_Th} from Propositions \ref{main1} and \ref{main2}, and prove Proposition \ref{main1} using a local law for $T(z,\zeta)$ (Corollary \ref{cor_T_local_law}) as an input. The proofs of Theorem \ref{th_T_local_law} and Corollary \ref{cor_T_local_law} are presented in Section \ref{Curly T section}. In Section \ref{proof_of_main2}, we prove Proposition \ref{main2}, which relates the variance of the linear eigenvalue statistics to the $\dot H^{1/2}$-norm.

	\textbf{Acknowledgments.} \\
	I would like to express my gratitude to L\'aszl\'o Erd\H{o}s for suggesting the project and supervising my work. I am also thankful to Yuanyuan Xu
	and Oleksii Kolupaiev for many helpful discussions.
	\newpage
\section{Model and Main Result} \label{result_sect}
	We begin with the definition of Wigner-type matrices originally introduced in Section 1.1 of \cite{Ajanki2015Wignertype}.
	\begin{Def}[Wigner-type matrices] \label{WT_def}
		Let $H = \left(H_{jk} \right)_{j,k=1} ^N $ be an $N\times N$ matrix with independent entries up to the Hermitian symmetry condition $H = H^*$ satisfying 
		\begin{equation}
			\E{H_{jk}} = 0.
		\end{equation}
		We consider both real and complex Wigner-type matrices. In case the matrix $H$ is complex we assume additionally that $\re H_{jk}$ and $\im H_{jk}$ are independent and $\Expv[H_{jk}^2] = 0$ for $k\neq j$.
		
		Denote by $S$ the matrix of variances $S_{jk} := \Expv[|H_{jk}|^2]$, and assume it satisfies
		\begin{gather} \tag{A} \label{cond_A}
			\frac{c_{inf}}{N} \le S_{jk} \le \frac{C_{sup}}{N},
		\end{gather}
		for all $j,k \in \{1,\dots, N\}$ and some strictly positive constants $C_{sup}, c_{inf}$.
		
		We assume a uniform bound on all other moments of $\sqrt{N}H_{jk}$, that is, for any $p \in \mathbb{N}$ there exists a positive constant $C_p$ such that
		\begin{equation} \label{moment_condition}
			\E{|\sqrt{N}H_{jk}|^p} \le C_p
		\end{equation}
		holds for all $j,k \in \{1,\dots, N\}$.
		
		Additionally, we assume that $S$ satisfies a
		H\"older regularity condition\footnote{As stated in \cite{Ajanki2015Wignertype}, assumption \eqref{cond_B} can be weakened to
			piece-wise $1/2$-H\"older regularity condition for some positive constant $L$ on finitely many intervals, in the sense that
			\begin{gather*}
				\max\limits_{a,b}\max\limits_{j,j'\in(NI_b)} \max\limits_{k,k' \in (NI_a)}  N^{3/2}\frac{|S_{jk} - S_{j'k'}|}{|j-j'|^{1/2}+|k-k'|^{1/2}}\le L,
			\end{gather*}
			where $\{I_a\}_{a=1}^n$ is a fixed finite partition of $[0,1]$ into smaller intervals, and $(NI_a)$ denotes the set of positive integers $j$ such that $j/N$ lies in $I_a$. %Under the piece-wise regularity condition, the supporting interval of the self-consistent measure can split into disjoint components and produce cusps -- features that do not appear in Wigner and generalized Wigner models. In the present paper, we do not pursue the behavior of the spectral statistics at the spectral edges or cusps.
		}, that is,
		\begin{gather} \tag{B} \label{cond_B}
			|S_{jk} - S_{j'k'}| \le \frac{L}{N}\left( \frac{|j-j'|+|k-k'|}{N}\right)^{1/2},
		\end{gather}
		for all $j,j',k,k' \in \{1,\dots,N\}$ and some positive constant $L$. The constants $c_{inf}$, $C_{sup}$, $C_p$ and $L$ are independent of $N$. 
	\end{Def}
	\subsection{Central Limit Theorem for Mesoscopic Linear Eigenvalue Statistics}
	\begin{theorem} \label{main_Th} (c.f. Theorem 2.5 in \cite{Landon2021Wignertype})
		Let $g$ be a $C^2_c(\mathbb{R})$ test function. Let $\varepsilon_0$ be a small fixed constant and let $N^{-1+\varepsilon_0} \le \eta_0 \le N^{-\varepsilon_0}$, and let $E_0$ be a fixed reference energy in the bulk of the spectrum, that is,  $\rho(E_0) \ge \varepsilon_0$ (\new here $\rho$ is the density of states to be defined in \eqref{rho_def} below \old).
		Define the scaled test function $f$ to be 
		\begin{equation} \label{scaled_f}
			f(x) := g\left(\frac{x-E_0}{\eta_0}\right),
		\end{equation} 
		then
		\begin{equation} \label{lim_gauss}
			\Tr f(H) - \E{\Tr f(H)} \xrightarrow{d} \mathcal{N}\left(0,\frac{1}{2\beta\pi^2}\norm{g}_{\dot{H}^{1/2}}^2\right),
		\end{equation}
		where $\beta =1$ and $\beta = 2$ corresponds to real symmetric and complex Hermitian $H$, respectively.
	\end{theorem}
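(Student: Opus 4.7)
The plan is to deduce the CLT from the optimal local law for $T(z,\zeta)$ (Corollary \ref{cor_T_local_law}) together with a deterministic variance computation. The bridge between $\Tr f(H)$ and resolvents is the Helffer--Sj\"ostrand formula
\[
\Tr f(H) - \E{\Tr f(H)} = -\frac{1}{\pi}\int_{\mathbb{C}} \partial_{\bar z}\tilde f(z)\bigl[\Tr G(z) - \E{\Tr G(z)}\bigr]\,\mathrm{d}^2 z,
\]
where $\tilde f$ is a smooth, compactly supported almost-analytic extension of $f$. After the change of variables $z = E_0 + \eta_0(x + \I y)$, the support of $\tilde f$ localizes the spectral parameters to distance $\asymp \eta_0$ from $E_0$, while the single-resolvent local law for Wigner-type matrices allows $|\im z|$ to be pushed down to $N^{-1+\delta}$. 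All subsequent work is then performed on this two-scale contour.

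First (corresponding to Proposition \ref{main1}), I would prove that the centered statistic $X := \Tr f(H) - \E{\Tr f(H)}$ converges to a centered Gaussian with a specific two-resolvent variance. The method of choice is a characteristic-function ODE: differentiate $\phi(\lambda) := \E{e^{\I\lambda X}}$ in $\lambda$, apply the cumulant expansion of $\E{H_{jk}F(H)}$ with $F(H) = G_{jk}(z)e^{\I\lambda X}$, and use the resolvent identity $HG = I + zG$. The Gaussian cumulant of $H_{jk}$ produces the leading term, giving
\[
\phi'(\lambda) = -\lambda\,V_N(f)\,\phi(\lambda) + o(1),
\]
where $V_N(f)$ is a deterministic double contour integral of $\partial_{\bar z}\tilde f(z)\,\partial_{\bar\zeta}\tilde f(\zeta)$ against the leading behaviour of sums of the form $\sum_{x,a}S_{xa}G_{ax}(z)G_{xa}(\zeta)$. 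This is exactly the trace of $T(z,\zeta)$ up to controllable diagonal corrections, so Corollary \ref{cor_T_local_law} replaces the random quantity by its deterministic limit with an error $\ll 1$ uniformly on the contour. Higher-order cumulants of $H_{jk}$ contribute negligibly thanks to the moment bound \eqref{moment_condition}, and solving the ODE with $\phi(0) = 1$ yields Gaussianity with variance $V_N(f)$.

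Second (corresponding to Proposition \ref{main2}), I would identify $\lim_{N\to\infty} V_N(f)$ with $\frac{1}{2\beta\pi^2}\|g\|_{\dot H^{1/2}}^2$; this step is purely deterministic. Substituting the explicit deterministic form of $T(z,\zeta)$ supplied by Corollary \ref{cor_T_local_law}, rescaling $z = E_0 + \eta_0 w$, $\zeta = E_0 + \eta_0 w'$, and using the H\"older regularity \eqref{cond_B} of $S$ together with the smoothness of the self-consistent density $\rho$ at $E_0$, the rescaled kernel converges to a universal limit depending only on $\rho(E_0)$ and $\beta$. A contour deformation followed by Parseval's identity on $\mathbb{R}$ then rewrites this limit as $\frac{1}{2\beta\pi^2}\|g\|_{\dot H^{1/2}}^2$, the factor $\beta^{-1}$ arising from the second-order cumulant contribution that distinguishes the real and complex symmetry classes.

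The hard part will be maintaining sharp control of the two-resolvent kernel down to $|\im z| \sim N^{-1+\delta}$ so as to cover the entire mesoscopic range $N^{-1} \ll \eta_0 \ll 1$: this is precisely where the optimal local law for $T$ is indispensable, since the suboptimal bound of \cite{Landon2021Wignertype} carries a factor of the inverted stability operator and forces an artificial cutoff at $\eta_0 \gtrsim N^{-1/5}$, necessitating DBM in that work. A secondary difficulty, which complicates the bookkeeping but not the limit, is that the deterministic approximation $M(z)$ of the resolvent is a non-scalar diagonal matrix in the Wigner-type setting; tracking the weights $M_{xx}(z)$ inside the variance kernel requires the H\"older regularity of $S$, but the non-trivial spatial structure cancels so that only $\rho(E_0)$ remains in the final expression.
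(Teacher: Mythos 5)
Your proposal follows essentially the same route as the paper: Helffer--Sj\"ostrand representation, a characteristic-function ODE obtained by cumulant expansion of $\E{H_{jk}G_{jk}(z)e^{\I\lambda X}}$, the optimal local law for $T(z,\zeta)$ to close the leading term deterministically, and a separate deterministic identification of the variance. The only place you diverge is the final identification of $V(f)$ with $\tfrac{1}{2\beta\pi^2}\norm{g}_{\dot H^{1/2}}^2$ (you invoke rescaling plus Parseval, whereas the paper integrates by parts against the antiderivative $-\tfrac{2}{\beta}\log\det(1-S\m\other{\m})$ and extracts the $2|x-y|^{-2}$ kernel asymptotics from the spectral decomposition of the saturated self-energy operator); both work, but note the limit is fully universal and does not retain any dependence on $\rho(E_0)$.
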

	
	\begin{remark}
		We remark that the universal limiting variance in \eqref{lim_gauss} coincides with the corresponding formulas for standard Wigner matrices \cite{He2017WignerCLT}, where $S_{jk} = N^{-1}$, $m_j(z) = m_{sc}(z)$ for all $j,k \in \{1,\dots, N\}$, and $m_{sc}(z)$ is the Stieltjes transform of the semicircle law.
	\end{remark}
	
\section{Local Laws for the Two-point Functions} \label{main_tech_sect}
	In this section, we introduce our main technical result, local laws for quantities that involve two resolvents of a Wigner-type matrix. Our prime motivation is to study the function $T(z,\zeta)$ defined in \eqref{Tfunction}, but our methods allow us to estimate a more general class of quantities, namely
	\begin{equation} \label{twoGdef}
		\sum_{a \neq y} w_a G_{\alpha a}(z) G_{a\beta}(\zeta), \quad \sum_b\sum_{a \neq b} W_{ab}G_{ba}(z)G_{ab}(\zeta),
	\end{equation}
	\new for fixed indices $\alpha,\beta,y$, and \old deterministic weights $w_a$, $W_{ab}$ satisfying $|w_a|, |W_{ab}|\le cN^{-1}$	 for some constant $c>0$. Here $G(z) := \left(H-z\right)^{-1}$ denotes the resolvent of $H$.  Objects of this type were first studied in \cite{Erdos2013DiffProfile} in the setting of random band matrices. We obtain the estimates in the sense of stochastic domination.
	\begin{Def} (Definition 2.1 in \cite{Erdos2013LocalLaw})  \label{stochdom_def}
		Let $\mathcal{X} = \mathcal{X}^{(N)}(u)$ and $\mathcal{Y} = \mathcal{Y}^{(N)}(u)$ be two families of random variables possibly depending on a parameter $u \in U^{(N)}$. We say that $\mathcal{Y}$ stochastically dominates $\mathcal{X}$ uniformly in $u$ if for any $\varepsilon > 0$ and $D>0$ there exists $N_0(\varepsilon,D)$ such that for any $N \ge N_0(\varepsilon, D)$,
		\begin{equation*}
			\sup\limits_{u\in U^{(N)}}\Prob{\mathcal{X}^{(N)}(u) > N^\varepsilon \mathcal{Y}^{(N)}(u)} < N^{-D}.
		\end{equation*}
		We denote this relation by $\mathcal{X} \prec \mathcal{Y}$ or $\mathcal{X} = \Oprec(\mathcal{Y})$.
	\end{Def}
	We consider spectral parameters $z$ lying in the domain $\mathcal{D}$, defined by
	\begin{equation} \label{D_def}
		\mathcal{D} := \{z \in\mathbb{C} : N^{-1+\tau} \le |\im z| \le \tau^{-1}, \, |\re z| \le \tau^{-1} \},
	\end{equation}
	for a fixed $\tau > 0$. As in Theorem \ref{main_Th}, our analysis is limited to the bulk of the spectrum, which we define via the \textit{self-consistent density of states} $\rho(E)\equiv\rho_N(E)$. The density $\rho(E)$ is recovered by the Stieltjes inversion formula, 
	\begin{equation} \label{rho_def}
		\rho(E) := \pi^{-1}\lim_{\eta\to+0}\im m(E+\I\eta),
	\end{equation}
	where $m(z) := N^{-1}\sum_{j=1}^N m_j(z)$, and $\m(z) = (m_j(z))_{j=1}^N$ is the unique (Theorem 4.1 in \cite{Ajanki2015Wignertype}) solution to the \textit{vector Dyson equation}
	\begin{equation} \label{VD_eq}
		\frac{-1}{\m(z)} = z + S\m(z),\quad \im \m(z)\im{z} > 0.
	\end{equation}
	Let $\bulk$ be the set on which $\rho(E)$ is positive. Theorem 4.1 of \cite{Ajanki2015Wignertype} guarantees that $\bulk$ consists of a finite union of open intervals $(\al^{(j)}, \be^{(j)})$. Then for $\kappa > 0$, we define the \textit{bulk domain} by
	\begin{equation} \label{D_bulk}
		\mathcal{D}_\kappa := \{z\in \mathcal{D} : \re z \in \bulk_\kappa\}, \quad 	\bulk_\kappa := \bigcup\limits_j [\al^{(j)}+\kappa, \be^{(j)} - \kappa].
	\end{equation}
	In particular, for all $z\in\mathcal{D}_\kappa$, $\rho(z) \ge C(\kappa)$
	for some constant $C(\kappa)>0$. \new Given $E_0$ as in Theorem \ref{main_Th}, we \old choose $\kappa$ so that $E_0 \in \bulk_{2\kappa}$.
	\begin{theorem} \label{th_T_local_law}
		There exists a positive constant $\epsilon=\epsilon_\kappa$ which is independent of $N$, such that for all $z,\zeta$ in $\mathcal{D}_\kappa$ with $|\re\zeta - \re z|\le \epsilon$, and deterministic vectors $\vect{w}\in\mathbb{C}^N$ satisfying $\norm{\vect{w}}_\infty \le cN^{-1}$, the following estimate holds, %assuming either $\alpha\neq\beta$ or $\alpha=\beta=y$,
		\begin{equation} \label{twoG_law_1}
			\begin{split}
				\sum_{a\neq y} w_{a} G_{\alpha a}(z) G_{a\beta}(\zeta) %=&\, \delta_{\alpha \beta}\bigl[\m(z)\m(\zeta)S\m(z)\m(\zeta) \bigl(1- S\m(z)\m(\zeta) \bigr)^{-1}\vect{w} \bigr]_{\alpha}\\
				=&\,\delta_{\alpha \beta}\bigl[\m(z)\m(\zeta) \bigl(1- S\m(z)\m(\zeta) \bigr)^{-1}\vect{w} \bigr]_{\alpha} - \delta_{\alpha \beta}\delta_{\alpha y}[\m(z)\m(\zeta)\vect{w}]_{\alpha}\\
				&+ \Oprec \bigl((\Psi(z)+\Psi(\zeta)) (\Psi(z)\Psi(\zeta) + \mathds{1}_{\{\im z\im \zeta <0\}}\min\{\Theta(z),\Theta(\zeta)\})\bigr), 
			\end{split} 
		\end{equation}
		where the vector $\m$ is identified with the diagonal operator $\diag{\m}$.
		
		Under the same conditions on $z$, $\zeta$, for any deterministic $N \times N$ matrix $W$ satisfying $|W_{ab}| \le cN^{-1}$ for all $a,b$, the following estimate holds,
		\begin{equation} \label{twoG_law_2}
			\begin{split}
				\sum_b\sum_{a\neq b} W_{ab}G_{ba}(z)G_{ab}(\zeta) =& \Tr\bigl[\m(z)\m(\zeta)S\m(z)\m(\zeta)\bigl(1- S\m(z)\m(\zeta) \bigr)^{-1}W \bigr]\\
				&+ N\Oprec \bigl((\Psi(z)+\Psi(\zeta))\Psi(z)\Psi(\zeta)+\mathds{1}_{\{\im z\im \zeta <0\}}\Theta(z)\Theta(\zeta)\bigr).
			\end{split}
		\end{equation}
		Here $\Psi(z)$ and $\Theta(z)$ denote control parameters defined as 
		\begin{equation} \label{PsiTheta_def}
			\Psi(z) := \sqrt{\frac{|\im m(z)|}{N|\eta|}} + \frac{1}{N|\eta|}, \quad \Theta(z) := \frac{1}{N|\eta|}, \quad z = E + \I\eta \in \mathbb{C}\backslash\mathbb{R}.
		\end{equation}
	\end{theorem}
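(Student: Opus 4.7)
The plan is to reduce the estimate on the two-resolvent sums in \eqref{twoG_law_1}--\eqref{twoG_law_2} to the inversion of $1 - \mathcal{L}$, where $\mathcal{L} := S\,\m(z)\m(\zeta)$ (with $\m$ acting as $\diag{\m}$), and to handle its near-zero eigenvalue by a Riesz-projection argument instead of the direct inversion used in \cite{Landon2021Wignertype}. This decomposition strategy is the one of \cite{Li2021genWigner}, where the destabilizing eigenvector was $\vect{1}$ for generalized Wigner matrices; for Wigner-type matrices there is no explicit formula for this eigenvector, so the novelty is to keep the spectral projector implicit via a contour integral.

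First, I derive an approximate linear equation for the quantity $T_{\alpha\beta}[\vect{w}] := \sum_{a\neq y} w_a G_{\alpha a}(z)G_{a\beta}(\zeta)$. Applying the identity $zG = HG - 1$ (or equivalently a cumulant expansion) in one of the resolvent entries and resumming, one produces second-order terms of the form $S_{\alpha a}\m(z)_a \m(\zeta)_a T_{\alpha\beta}$ and an explicit diagonal forcing involving $\m(z)\m(\zeta)\vect{w}$. Replacing every diagonal entry $G_{aa}$ by $m_a$ using the known single-resolvent local law (with error $\Oprec(\Psi)$), and bounding off-diagonal terms and higher cumulants by products of fluctuations, yields a matrix equation
\begin{equation*}
    (1-\mathcal{L})\,T[\vect{w}] = D[\vect{w}] + \mathcal{E}[\vect{w}],
\end{equation*}
where $D[\vect{w}]$ matches the deterministic right-hand side in \eqref{twoG_law_1} (the second, $\delta_{\alpha y}$ term appearing from the exclusion $a\neq y$), and $\mathcal{E}$ collects fluctuation and higher-cumulant residuals of naive size $\Oprec((\Psi(z)+\Psi(\zeta))\Psi(z)\Psi(\zeta))$.

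Next, to invert $1-\mathcal{L}$ without paying the full norm of its inverse, I split the spectrum of $\mathcal{L}$ into the single eigenvalue $\lambda_*(z,\zeta)$ closest to $1$ and its complement. Choose a small circle $\gamma$ around $\lambda_*$, separated from the remaining spectrum by input from \cite{Ajanki2015Wignertype} (this is exactly what restricts the analysis to $\mathcal{D}_\kappa$ and $|\re\zeta-\re z|\le\epsilon_\kappa$), and set
\begin{equation*}
    P(z,\zeta) := \frac{1}{2\pi\I}\oint_{\gamma}(\lambda-\mathcal{L})^{-1}\,\mathrm{d}\lambda.
\end{equation*}
On the complementary subspace the operator $(1-\mathcal{L})^{-1}(1-P)$ is uniformly bounded, so $(1-P)T[\vect{w}]$ receives precisely the error $\Oprec((\Psi(z)+\Psi(\zeta))\Psi(z)\Psi(\zeta))$ claimed in \eqref{twoG_law_1}. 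The delicate piece is the scalar equation for $PT[\vect{w}]$: testing against $P$ and exploiting the contour representation (which bypasses any explicit left eigenvector), one shows that in the regime $\im z\,\im\zeta<0$ the residual $P\mathcal{E}[\vect{w}]$ carries an additional factor of $\Psi$ beyond the naive bound, which after division by $\lambda_*\sim \im z\,\im\zeta/(|\im z|+|\im\zeta|)$ upgrades a $\Psi$ into the smaller $\Theta$, producing the indicator term $\mathds{1}_{\{\im z\,\im\zeta<0\}}\min\{\Theta(z),\Theta(\zeta)\}$. For \eqref{twoG_law_2} one then pairs against the weight matrix $W$ and takes a trace; the extra factor of $N$ reflects the extensive nature of the sum.

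The main obstacle is the estimate on $PT[\vect{w}]$ in absence of any closed form for the destabilizing eigenvector. One must track how each term in the cumulant expansion behaves after contraction with the implicit left null-direction of $1-\mathcal{L}$, and verify the cancellation that turns a $\Psi$ into a $\Theta$; this requires careful analytic dependence of $P(z,\zeta)$ on both spectral parameters, and is the reason one also needs the constraint $|\re\zeta-\re z|\le\epsilon_\kappa$ so that $\lambda_*$ remains isolated and $\gamma$ can be chosen uniformly. A secondary technical point is to ensure that higher-order cumulant terms, which generate long chains of resolvents, can still be bounded by iterating the same decomposition without reintroducing factors of $\|(1-\mathcal{L})^{-1}\|$.
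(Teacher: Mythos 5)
Your overall architecture coincides with the paper's in two respects: the contour-integral (Riesz) representation of the projector onto the destabilizing direction of $1-S\m(z)\m(\zeta)$, with the separation of its smallest eigenvalue guaranteed by the spectral gap of the saturated self-energy operator (this is exactly what forces $z,\zeta\in\mathcal{D}_\kappa$ and $|\re z-\re\zeta|\le\epsilon_\kappa$), and a ``stable-direction'' local law stating that when the weight lies in the range of $(1-S\m\other{\m})^{-1}(1-\Pi)$ the naive error $\Oprec((\Psi(z)+\Psi(\zeta))\Psi(z)\Psi(\zeta))$ is achieved (Lemma \ref{self_eq_lemma} combined with Claim \ref{(1-Smm)(1-Pi)}). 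Up to that point your plan is sound.

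The gap is in your treatment of the unstable component, which is precisely the crux of the theorem. You propose to project the approximate equation, claim that $P\mathcal{E}[\vect{w}]$ carries ``an additional factor of $\Psi$ beyond the naive bound,'' and then divide by the small eigenvalue. Neither step is justified, and the arithmetic does not come out: the small eigenvalue of $1-S\m\other{\m}$ is of size $|\im z|+|\im\zeta|$ (not $\im z\,\im\zeta/(|\im z|+|\im\zeta|)$, which is negative in the relevant regime $\im z\,\im\zeta<0$), and even granting a full extra factor of $\Psi+\Psi(\zeta)$ in the projected error, $(\Psi+\Psi(\zeta))^2\Psi\Psi(\zeta)/(|\im z|+|\im\zeta|)$ does not reduce to $(\Psi+\Psi(\zeta))\min\{\Theta(z),\Theta(\zeta)\}$. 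More importantly, the indicator term in \eqref{twoG_law_1} does not arise from a cancellation in the fluctuating error at all. The paper's mechanism is deterministic and exact: since $\Pi\vect{1}$ is bounded away from zero (Claim \ref{Pi 1 > c}), one decomposes the \emph{weight} as $W=Y+\vect{1}\s^*$ with $\Pi Y=0$ and $\norm{\s}_\infty\lesssim N^{-1}$; the $Y$-part goes through the stable-direction law, while for the $\vect{1}$-part the $a$-summation collapses via the resolvent identity $(z-\zeta)G(z)G(\zeta)=G(z)-G(\zeta)$, so that component equals $\bar s_x\bigl(\tfrac{G_{\alpha\beta}(z)-G_{\alpha\beta}(\zeta)}{z-\zeta}-G_{\alpha y}(z)G_{y\beta}(\zeta)\bigr)$ and is estimated by the \emph{single-resolvent} local law; the prefactor $N^{-1}|z-\zeta|^{-1}\le\min\{\Theta(z),\Theta(\zeta)\}$ is exactly where the indicator term comes from. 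Without this identification of the unstable direction of the weight with $\vect{1}$ (and the quantitative non-degeneracy $\norm{\Pi\vect{1}}_\infty\gtrsim\opnorminf{\Pi}$), your argument has no way to control $T[P\vect{w}]$, and the asserted gain of a factor $\Psi$ in $P\mathcal{E}$ would need a proof that I do not see how to supply.
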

	Theorem \ref{th_T_local_law} implies the following averaged and entry-wise local laws for $T(z,\zeta)$ \new from \eqref{Tfunction} \old.
	\begin{corollary} \label{cor_T_local_law}
		Let $z,\zeta$ satisfy the assumptions of Theorem \ref{th_T_local_law}. The entries $T_{xy}(z,\zeta)$ admit the estimate
		\begin{equation} \label{T_xy_law}
			\begin{split}
					T_{xy}(z,\zeta) =& \bigl[\left(S\m(z)\m(\zeta)\right)^2\bigl(1- S\m(z)\m(\zeta) \bigr)^{-1} \bigr]_{xy}\\
					&+ \Oprec \bigl((\Psi(z)+\Psi(\zeta))\bigl(\Psi(z)\Psi(\zeta)+\mathds{1}_{\{\im z\im \zeta <0\}}\min\{\Theta(z),\Theta(\zeta)\}\bigr)\bigr).
				\end{split}
		\end{equation}
		Furthermore, for all deterministic $N \times N$ matrices $A$, the following equality holds
		\begin{equation} \label{T_local_law}
			\begin{split}
				\Tr[A\,T(z,\zeta)] =&\Tr[A\bigl(1-S\m(z)\m(\zeta)\bigr)^{-1}\bigl(S\m(z)\m(\zeta)\bigr)^{2}]% + \mathpzc{E}(z,\zeta),
				\\&+ N\opnorminf{A}\Oprec\bigl((\Psi(z)+\Psi(\zeta))\Psi(z)\Psi(\zeta) + \mathds{1}_{\{\im z\im \zeta <0\}}\Theta(z)\Theta(\zeta)\bigr).
			\end{split} 
		\end{equation}
%		where the error term $\mathpzc{E}(z,\zeta)$ is analytic in both variables and admits the bound
%		\begin{equation} \label{T error bounds}
%			\mathpzc{E}(z,\zeta) \prec N\opnorminf{A}\left((\Psi(z)+\Psi(\zeta))\Psi(z)\Psi(\zeta) + \mathds{1}_{\{\im z\im \zeta <0\}}\Theta(z)\Theta(\zeta)\right).
		%\end{equation} 	
	\end{corollary}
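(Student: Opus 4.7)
My plan is to derive both estimates directly from Theorem \ref{th_T_local_law} by inserting the natural weights and matching the deterministic leading terms via the Neumann series for $(1-S\m(z)\m(\zeta))^{-1}$. Throughout, write $M := \m(z)\m(\zeta)$, a diagonal matrix; since $S = S^T$ and $M = M^T$, we have $(SM)^T = MS$, hence $[(SM)^n]_{xy} = [(MS)^n]_{yx}$ for every $n \ge 0$.

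For the entry-wise estimate \eqref{T_xy_law}, I rewrite $T_{xy}(z,\zeta) = \sum_{a\neq y} S_{xa}\,G_{ya}(\zeta)\,G_{ay}(z)$ and recognize this as the left-hand side of \eqref{twoG_law_1} with spectral parameters $(\zeta,z)$, indices $\alpha = \beta = y$, and weight vector $\vect{w}$ given by $w_a = S_{xa}$; condition \eqref{cond_A} supplies $\norm{\vect{w}}_\infty \le C_{sup}/N$. Both Kronecker deltas in \eqref{twoG_law_1} evaluate to $1$, so Theorem \ref{th_T_local_law} yields
\[
T_{xy}(z,\zeta) = \bigl[M(1-SM)^{-1}Se_x\bigr]_y - m_y(z)m_y(\zeta)\,S_{xy} + \Oprec(\cdots),
\]
with error identical to that in \eqref{T_xy_law}. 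Expanding $M(1-SM)^{-1}S = \sum_{n \ge 1}(MS)^n$ via the geometric series (using $M(SM)^{n-1}S = (MS)^n$) and invoking $[(MS)^n]_{yx} = [(SM)^n]_{xy}$, the bracketed expression equals $\sum_{n \ge 1}[(SM)^n]_{xy}$; subtracting the $n=1$ term $[SM]_{xy} = m_y(z)m_y(\zeta)S_{xy}$ telescopes this to $\sum_{n\ge 2}[(SM)^n]_{xy} = [(SM)^2(1-SM)^{-1}]_{xy}$, the deterministic term in \eqref{T_xy_law}.

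For the averaged estimate \eqref{T_local_law}, I perform the $x$-summation first and relabel $y \to b$ to obtain
\[
\Tr[A\,T(z,\zeta)] = \sum_b \sum_{a\neq b} W_{ab}\,G_{ba}(\zeta)\,G_{ab}(z), \qquad W_{ab} := (AS)_{ba}.
\]
This matches the left-hand side of \eqref{twoG_law_2} with spectral parameters $(\zeta,z)$ and deterministic weight matrix $W = (AS)^T$. The bound $|W_{ab}| \le C_{sup}\opnorminf{A}/N$ follows from \eqref{cond_A}, so applying \eqref{twoG_law_2} to $W/\opnorminf{A}$ and rescaling produces an error of the claimed size $N\opnorminf{A}\,\Oprec(\cdots)$. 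The leading term $\Tr[MSM(1-SM)^{-1}(AS)^T]$ becomes $\Tr[A^T(MS)^2(1-MS)^{-1}]$ by cyclicity of the trace combined with $M(1-SM)^{-1}S = MS(1-MS)^{-1}$; finally, using $\Tr[B] = \Tr[B^T]$ together with $(MS)^T = SM$ recasts this as $\Tr[A(SM)^2(1-SM)^{-1}]$, matching \eqref{T_local_law}.

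No step poses a serious obstacle, since the probabilistic content is entirely absorbed into Theorem \ref{th_T_local_law} and what remains is bookkeeping. The main point of care is to match the orientations of the $G$-indices correctly, which forces the swap $(z,\zeta) \to (\zeta,z)$ when applying \eqref{twoG_law_1}--\eqref{twoG_law_2}, and to observe that the Kronecker-delta structure on the right-hand side of \eqref{twoG_law_1}, combined with the $n \ge 1$ start of the Neumann expansion $M(1-SM)^{-1}S = \sum_{n\ge 1}(MS)^n$, produces exactly the target expression $(SM)^2(1-SM)^{-1}$ after a single subtraction.
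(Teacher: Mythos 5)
Your proposal is correct and follows essentially the same route as the paper: the entry-wise law is \eqref{twoG_law_1} with $\alpha=\beta=y$ and $w_a=S_{xa}$, and the averaged law is \eqref{twoG_law_2} with $W=SA^t$; the remaining work is the algebraic identity $(1-X)^{-1}X - X = X^2(1-X)^{-1}$ and transposition using $S=S^t$, exactly as you carry out. Your extra care with the index orientation (swapping $(z,\zeta)\to(\zeta,z)$, which leaves both the deterministic term and the error invariant) is a correct and slightly more explicit version of what the paper does implicitly.
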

	\begin{remark}
		The error estimates in the entry-wise local law $\eqref{twoG_law_1}$, and hence in \eqref{T_xy_law} are optimal. Indeed, for $S_{jk} := N^{-1}$, which corresponds to the standard Wigner matrices, and $\zeta = \bar z$, a simple calculation using the Ward identity shows that
		\begin{equation}
			T_{xy}(z,\bar z) = N^{-1}|\im z|^{-1}\im m_{sc}(z) - N^{-1}|m_{sc}(z)|^2 + \Oprec\bigl(\Theta(z)\Psi(z)\bigr).
		\end{equation}
		%	\end{remark} 
	%	\begin{remark}
		The error estimate in \eqref{twoG_law_2} is not optimal; it can be improved to
		\begin{equation} \label{optimal_avgT_error}
			\Oprec \bigl(N(\Psi(z)+\Psi(\zeta))^2\bigl(\Psi(z)\Psi(\zeta)+\mathds{1}_{\{\im z\im \zeta <0\}}N\Theta(z)\Theta(\zeta)\bigr)\bigr)
		\end{equation}
		However, \eqref{twoG_law_2} %and its corollary \eqref{T_local_law} are 
		is sufficient for establishing the CLT, so for the sake of brevity, we do not present the proof of \eqref{optimal_avgT_error} in full detail. We only indicate the necessary ingredients in Remark \ref{optimal_T_rem} below. 	
	\end{remark}

\section{Notations and Preliminaries} \label{prelim_sect}
	\subsection{Notations}
	For a vector $\vect{x} = (x_j)_{j=1}^N \in \mathbb{C}^N$ we use the standard definitions of $\ell^2$ and $\ell^\infty$ norms, namely,
	\begin{equation*}
		\norm{\vect{x}}_2 =  \biggl(\sum\limits_{j=1}^N |x_j|^2 \biggr)^{1/2}  ,\quad \norm{\vect{x}}_\infty = \max\limits_{j}|x_j|.
	\end{equation*}
	For a linear operator $T :\mathbb{C}^N \to \mathbb{C}^N$, we denote its matrix norms induced by $\ell^2$ and $\ell^\infty$ norms, respectively, by 
	\begin{equation*}
		\opnormtwo{T} = \sup\limits_{\norm{\vect{x}}_2 = 1}\norm{T\vect{x}}_2,\quad \opnorminf{T} = \sup\limits_{\norm{\vect{x}}_\infty = 1}\norm{T\vect{x}}_\infty.
	\end{equation*}
	For two vectors $\vect{x},\vect{y}\in \mathbb{C}^N$ we use angle brackets to denote the $\ell^2$ scalar product, while for a single vector $\vect{x} \in \mathbb{C}^N$ angle brackets denote the average of its coordinates $$\langle \vect{x},\vect{y} \rangle = \sum\limits_{j=1}^N \bar x_j y_j,\quad \langle \vect{x} \rangle = \frac{1}{N}\sum\limits_{j=1}^N x_j.$$
	We use $\vect{x}\vect{y}$ to denote a coordinate-wise product of vectors $\vect{x}$ and $\vect{y}$,
	\begin{equation*}
		(\vect{x}\vect{y})_j = x_jy_j, \quad j \in \{1,\dots,N\}.
	\end{equation*}
	Similarly, for a given vector $\vect{x}$ with non-zero entries, $\ddfrac{1}{\vect{x}}$ denotes a coordinate-wise multiplicative inverse
	\begin{equation*}
		\left(\frac{1}{\vect{x}}\right)_j = \frac{1}{x_j}, \quad j\in \{1,\dots,N \}.
	\end{equation*}
	We use $\vect{1}$ to denote the vector of ones  $(1,\dots,1)^t$ in $\mathbb{C}^N$.
	
	For a measurable function $f : \mathbb{R} \to \mathbb{R}$ we use the standard definition of the $L^p$ norms for $p \ge 1$, and the following definition of the $\dot{H}^{1/2}$ norm
	\begin{equation*}
		%\norm{f}_p = \left(\int\limits_\mathbb{R} |f(x)|^p \mathrm{d}x\right)^{1/p}, \quad
		\norm{f}_{\dot{H}^{1/2}} = \left(\iint\limits_{\mathbb{R}^2} \frac{|f(x)-f(y)|^2}{|x-y|^2}\mathrm{d}x\mathrm{d}y\right)^{1/2}.
	\end{equation*}
	For two deterministic quantities $X, Y \in \mathbb{R}$ depending on $N$, we write $X \ll Y$ if there exists $\varepsilon, N_0 > 0$ such that $|X| \le N^{-\varepsilon}|Y|$ for all $N \ge N_0$.
	Similarly, we write $X \lesssim Y$ if there exists a constant $C,N_0>0$ such that $|X| \le C|Y|$ for all $N \ge N_0$, and $X \sim Y$ if both $X\lesssim Y$ and $Y \lesssim X$ hold.

	We use $C$ and $c$ to denote constants, the precise value of which is irrelevant and may change from line to line.

	\subsection{Local Law for the Resolvent}
	In this subsection, we summarize the facts on Wigner-type matrices that we use throughout our proofs. Majority of these results were obtained in \cite{Ajanki2019QVE} (see also \cite{Ajanki2017SingularitiesQVE}), but we refer to their concise versions from \cite{Ajanki2015Wignertype} adapted for the Wigner-type setting.
    
    \begin{lemma} \label{m_lemma}(%Theorem 2.1 in \cite{Ajanki2019QVE}, 
    	Theorem 4.1 in \cite{Ajanki2015Wignertype}) The solution $\m(z)$ of \eqref{VD_eq} satisfies the following properties:\\
        (1) For every $j \in \{1,\dots, N \}$ there exists a generating probability measure $\nu_j(\mathrm{d}x)$ such that
        \begin{equation} \label{nu_def}
            m_j(z) = \int\limits_{\mathbb{R}} \frac{\nu_j(\mathrm{d}x)}{x-z}.
        \end{equation}
        (2) If the matrix of variances $S$ satisfies conditions \eqref{cond_A} and \eqref{cond_B}, then for all $z \in \mathbb{C}\backslash\mathbb{R}$, the solution admits the following bounds
        \begin{equation} \label{m_bound}
            \norm{\m(z)}_\infty \le %\frac{c}{|\rho(z)| + \dist(z,\supp \rho)}
            \frac{c}{1+|z|}, \quad \norm{\frac{1}{\m(z)}}_\infty \le C(1+|z|).
        \end{equation}
    \end{lemma}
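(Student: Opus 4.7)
The plan is to derive both statements from the structure of the vector Dyson equation \eqref{VD_eq} combined with the Nevanlinna representation theorem, essentially reproducing the argument in \cite{Ajanki2015Wignertype}. First I would establish existence and uniqueness of the solution $\m(z)$ satisfying the sign condition $\im\m(z)\im z > 0$ by a standard contraction-mapping argument for the map $\m \mapsto -(z + S\m)^{-1}$ on the Siegel upper half-space, which is strictly contractive in the hyperbolic metric whenever $\im z > 0$. Given this solution, each coordinate $m_j(z)$ is analytic on $\mathbb{C}^+$ and maps $\mathbb{C}^+$ into itself, hence is a Herglotz function and admits a Nevanlinna integral representation. To identify the representing measure as a probability measure, I would verify the normalization $\lim_{y\to+\infty}(-\I y)m_j(\I y) = 1$, which follows directly from \eqref{VD_eq}: iterating once gives $(S\m(z))_j = \bigO{|z|^{-1}}$ for large $|\im z|$, hence $m_j(z) = -z^{-1} + \bigO{|z|^{-2}}$.

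For part (2), I would split the analysis by the magnitude of $|z|$. Using the equation together with \eqref{cond_A}, one first shows that $\supp{\nu_j} \subset [-R_*, R_*]$ for some $R_*$ depending only on $C_{sup}$, via a self-consistent bound on the supremum $R_* := \sup_j\sup \supp{\nu_j}$ obtained by estimating $|z + (S\m)_j|$ at the edges of the supports. Once the supports are uniformly compact, the integral representation \eqref{nu_def} yields $|m_j(z)| \le 1/(|z|-R_*) \lesssim 1/(1+|z|)$ for $|z|\ge 2R_*$. The companion lower bound $|1/m_j(z)| \lesssim 1 + |z|$ follows in a single line from \eqref{VD_eq} rewritten as $-1/m_j = z + (S\m)_j$ together with $|(S\m)_j| \le C_{sup}\norm{\m}_\infty$, and is therefore a direct corollary of the upper bound.

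The main obstacle is establishing $\norm{\m(z)}_\infty = \bigO{1}$ uniformly in the remaining intermediate regime $|z| \le 2R_*$ with $|\im z|$ arbitrarily small. The trivial Stieltjes bound $|m_j(z)|\le 1/|\im z|$ degenerates as $\im z \to 0$, so a genuinely deeper argument is needed to show that the generating measures $\nu_j$ have uniformly bounded densities near the spectrum. The strategy, carried out in detail in \cite{Ajanki2019QVE}, is a quantitative stability analysis of \eqref{VD_eq}: one studies the invertibility of the linearized operator $1 - \m(z)\conj{\m(z)}S$ acting on $\ell^\infty$, and uses the flatness condition \eqref{cond_A} together with the H\"older regularity \eqref{cond_B} to bound the inverse uniformly in $z$. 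This in turn propagates the a priori boundedness of $\m$ from the interior of $\mathbb{C}^+$ down to the real axis, at which point a continuity-and-compactness argument on the compact region $\{|z|\le 2R_*\}\setminus\mathbb{R}$ closes the case and completes the proof.
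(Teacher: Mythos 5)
This lemma is not proved in the paper at all: it is quoted verbatim from Theorem 4.1 of \cite{Ajanki2015Wignertype} (with the detailed arguments in \cite{Ajanki2019QVE}), so there is no internal proof to compare against. Your outline is a faithful summary of the argument in those references: contraction on the upper half-space for existence/uniqueness, the Herglotz/Nevanlinna representation with the normalization $m_j(\I y)\sim -1/(\I y)$ giving a probability measure, uniform compactness of $\supp{\nu_j}$ from \eqref{cond_A}, the one-line lower bound $|1/m_j|=|z+(S\m)_j|\le|z|+C_{sup}\norm{\m}_\infty$, and the honest acknowledgment that the only genuinely hard step is the uniform upper bound on $\norm{\m(z)}_\infty$ as $\im z\to 0$ in the bounded-$|z|$ regime. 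Two small remarks on that last step: the relevant linearization of \eqref{VD_eq} is $1-\m^2S$ (the operator $1-|\m|S|\m|$ enters through the saturated self-energy), not $1-\m\conj{\m}S$ as you wrote; and in the flat case the argument in \cite{Ajanki2019QVE} first extracts the a priori $\ell^2$ bound $\langle|\m|^2\rangle\lesssim 1$ by averaging the imaginary part of \eqref{VD_eq} (using $(S\im\m)_j\sim\langle\im\m\rangle$ from \eqref{cond_A}) before bootstrapping to $\ell^\infty$, rather than proceeding purely via inversion of the stability operator. Since you, like the paper, ultimately defer this step to the cited stability analysis, your proposal is consistent with how the result is used here.
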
 

	We now state the optimal \textit{averaged and isotropic local laws} for Wigner-type matrices.
% three form of the local law
    \begin{theorem} \label{local_law} (Corollary 1.8 in \cite{Ajanki2015Wignertype})
        Let $\vect{w},\vect{x},\vect{y}$ be deterministic vectors in $\mathbb{C}^N$ satisfying $\norm{\vect{w}}_\infty = 1$ and $\norm{\vect{x}}_2 = \norm{\vect{y}}_2 = 1$. Then the following estimates hold uniformly in $z\in\mathcal{D}$:
        \begin{equation} \label{law2}
        	N^{-1}\bigl| \Tr\bigl[\vect{w} (G(z) - \m(z))\bigr] \bigr| \prec \Theta(z), \quad  \bigl|\langle \vect{x}, (G(z) -\m(z)) \vect{y} \rangle \bigr| \prec \Psi(z),
        \end{equation} 
    	where vectors $\m$ and $\vect{w}$ are associated with corresponding diagonal matrices.
    \end{theorem}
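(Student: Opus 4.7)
The plan is to follow the now-standard strategy for proving optimal local laws for Wigner-type matrices developed in \cite{Ajanki2019QVE,Ajanki2015Wignertype}, reducing the problem to a perturbed vector Dyson equation for the diagonal resolvent entries and then controlling the error using a combination of stability, large-deviation bounds, and fluctuation averaging.

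First I would derive a perturbed version of the vector Dyson equation \eqref{VD_eq} for the diagonal vector $\vect{g}(z) := (G_{ii}(z))_{i=1}^N$. Using either the Schur complement formula or a single cumulant expansion applied to $H_{ii}G_{ii}$ and to the sums $\sum_{k\neq i} H_{ik}G_{ki}$, one obtains a relation of the form
\begin{equation*}
-\frac{1}{g_i(z)} = z + (S\vect{g}(z))_i + d_i(z), \qquad i = 1,\dots,N,
\end{equation*}
where the perturbation $\vect{d}(z)$ collects the fluctuations. The key analytic input is then the stability of \eqref{VD_eq} in the bulk: the linearization around $\m$ is essentially the operator $\mathcal{L}(z) := 1 - \m(z)^2 S$, whose inverse is controlled (with norm bounded by a constant depending on $\kappa$) as long as $\re z \in \bulk_\kappa$. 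This stability theory is exactly the content of \cite{Ajanki2019QVE,Ajanki2017SingularitiesQVE}, and it yields, schematically, $\norm{\vect{g}-\m}_\infty \lesssim \opnorminf{\mathcal{L}^{-1}}\cdot \norm{\vect{d}}_\infty$, together with an analogous averaged version.

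The core work is to prove the two matching bounds on $\vect{d}$: an entrywise estimate $\norm{\vect{d}(z)}_\infty \prec \Psi(z)$ and an averaged estimate $|\langle \vect{w}\vect{d}(z)\rangle|\prec \Theta(z)$ for arbitrary $\vect{w}$ with $\norm{\vect{w}}_\infty = 1$. The entrywise bound follows from a large-deviation argument on quadratic forms $\sum_{k,\ell\neq i} H_{ik}G^{(i)}_{k\ell}H_{\ell i}$, combined with the Ward identity
\begin{equation*}
\sum_{\ell} |G_{k\ell}|^2 = \frac{\im G_{kk}}{|\eta|},
\end{equation*}
which produces the characteristic $\sqrt{\im m/(N|\eta|)}$ scale. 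The averaged bound is the delicate \emph{fluctuation averaging} step: one expands $\langle \vect{w}\vect{d}\rangle$ in cumulants of the entries of $H$ to high order and shows that, after careful combinatorial cancellations and repeated use of Ward identities, one gains an extra factor $\sqrt{N|\eta|/\im m}$ relative to the pointwise bound, yielding the target $1/(N|\eta|)$. To make the nonlinear self-consistent equation accessible, one runs a bootstrap (continuity) argument in $\eta = \im z$: the bound is trivial at $|\eta|\sim 1$ and is propagated down to $|\eta|\ge N^{-1+\tau}$ by combining the stability of $\mathcal{L}$ with the fluctuation bounds in a standard Gr\"onwall-type iteration.

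Finally I would upgrade the diagonal local law to the full isotropic estimate $|\langle \vect{x}, (G-\m)\vect{y}\rangle|\prec \Psi(z)$. For arbitrary unit vectors $\vect{x},\vect{y}$, the diagonal result controls $G_{ii}-m_i$; the off-diagonal entries $G_{xy}$ with $x\neq y$ are written via the identity
\begin{equation*}
G_{xy} = -G_{xx}\sum_{k\neq x} H_{xk}G^{(x)}_{ky},
\end{equation*}
which is again a quadratic form in independent variables and so is bounded by $\Psi(z)$ by a further large-deviation estimate. Summing these entry-level bounds against $\vect{x},\vect{y}$ with Cauchy-Schwarz produces the claimed isotropic bound.

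The main obstacle is the fluctuation averaging step. The entrywise bound alone only gives $\Psi(z)\sim 1/\sqrt{N|\eta|}$ after averaging, which is not strong enough; one must therefore exploit the fact that averaging against $\vect{w}$ produces cancellations in the cumulant expansion. This is the place where the full structure of Wigner-type variance matrices enters (rather than just the flatness assumption \eqref{cond_A}), and it is what necessitates the H\"older regularity \eqref{cond_B} when combined with the stability bounds for $\mathcal{L}^{-1}$ in the bulk.
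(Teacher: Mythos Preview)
The paper does not supply its own proof of this statement: Theorem~\ref{local_law} is quoted as a preliminary result from \cite{Ajanki2015Wignertype} (Corollary~1.8 there), so there is no argument in the present paper to compare against. Your outline is broadly faithful to the strategy of \cite{Ajanki2015Wignertype,Ajanki2019QVE}: perturbed vector Dyson equation, stability of $(1-\m^2 S)^{-1}$, large-deviation bounds, fluctuation averaging, and a continuity argument in $\eta$.

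There is, however, a genuine gap in your final step. Upgrading the entrywise bound $|G_{jk}-\delta_{jk}m_j|\prec\Psi$ to the isotropic estimate by ``summing against $\vect{x},\vect{y}$ with Cauchy--Schwarz'' does not work: for $\ell^2$-normalized $\vect{x},\vect{y}$ one only gets $\bigl|\sum_{j,k}\bar{x}_j(G_{jk}-\delta_{jk}m_j)y_k\bigr|\le \Psi\sum_{j,k}|x_j||y_k|$, which can be as large as $N\Psi$. The isotropic local law is \emph{not} a corollary of the entrywise one; it requires a separate high-moment (or polynomialization/graphical) expansion for the full quadratic form $\langle\vect{x},(G-\m)\vect{y}\rangle$ that exploits cancellations among the off-diagonal terms, as carried out in \cite{Ajanki2015Wignertype}. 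A secondary imprecision: your stability discussion is restricted to $\re z\in\bulk_\kappa$, whereas the statement is asserted uniformly on $\mathcal{D}$, which includes spectral parameters outside the bulk where $\opnorminf{(1-\m^2 S)^{-1}}$ is no longer bounded by a $\kappa$-dependent constant and the refined edge/cusp analysis of \cite{Ajanki2019QVE} is needed.
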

	 In particular, it follows from  the isotropic local law \eqref{law2}  that for any $j$, $k \in \{1,\dots, N\}$,
	\begin{equation} \label{law1}
	\left|G_{jk}(z) - \delta_{jk} m_j(z) \right| \prec \Psi(z).
	\end{equation}

	\subsection{Preliminary Bounds on the Stability Operator}
	A significant part of our proof revolves around the \textit{stability operator}, originally introduced in \cite{Ajanki2019QVE}, that emerges when studying the two-point function $T(z,\zeta)$ defined in \eqref{Tfunction}. In this subsection, we collect the known bounds on the stability and related operators.
	
    The stability operator $\left(1 -S\m(z)\m(\zeta)\right)$ is defined by the matrix with entries
    \begin{equation} \label{Smm_def}
        \left(1-S\m(z)\m(\zeta)\right)_{jk} :=\delta_{jk} - S_{jk}m_k(z)m_k(\zeta),\quad j,k\in \{1,\dots, N\},\quad z,\zeta \in \mathbb{C}\backslash\mathbb{R}.
    \end{equation} 
	Throughout this paper we use $\m$ (and various functions of $\m$, such as $\im\m$, $|\m|$, $\m^{-1}$, $\m'$) to denote both a vector $(m_j)_{j=1}^N$ 
	and the corresponding multiplication operator, i.e., $\diag{(m_j)_{j=1}^N}$. Note that this notation agrees with the point-wise multiplication of two vectors if the first multiplicand is interpreted as an operator. We stress which interpretation is used whenever ambiguity may arise.
	
    The analysis of the stability operator relies on the corresponding \textit{saturated self-energy operator} $F$,  studied in \cite{Landon2021Wignertype}, that depends on two spectral parameters $z$, $\zeta$, and is defined as 
    \begin{equation} \label{F_def}
        F_{jk}(z,\zeta) := |m_j(z)m_j(\zeta)|^{1/2}S_{jk}|m_k(z)m_k(\zeta)|^{1/2},\quad j,k\in \{1,\dots, N\},\quad z,\zeta \in \mathbb{C}\backslash\mathbb{R}.
    \end{equation}
    The following statements encompass the main properties of $F$ and preliminary bounds on the stability operator.
    \begin{prop} \label{F_prop} (Proposition 4.3 in \cite{Landon2021Wignertype}, c.f. Proposition 7.2.9 and Lemma 7.4.4 in \cite{Erdoes2019dyson})
    For any $z,\zeta \in \mathbb{C}$, the principal eigenvalue of $F$ defined in \eqref{F_def} is positive and simple, the corresponding $\ell^2$-normalized eigenvector $\vv(z,\zeta)$ has strictly positive entries.
    The norm of $F$ admits the following upper bound
        \begin{equation}
            \opnormtwo{F(z,\zeta)} \le 1 - \frac{1}{2}\biggl( |\im z| \frac{\langle \vv(z,z), |\m(z)|\rangle}{\langle \vv(z,z), \frac{|\im \m(z)|}{|\m(z)|}\rangle} +
            |\im \zeta| \frac{\langle \vv(\zeta,\zeta), |\m(\zeta)| \rangle}{\langle \vv(\zeta,\zeta), \frac{|\im \m(\zeta)|}{|\m(\zeta)|} \rangle}
            \biggr).
        \end{equation}
     If $|z|,|\zeta| \lesssim 1$, then the entries of $\vv(z,\zeta)$ are comparable in size, that is
    \begin{equation} \label{v_bound}
    	c_\kappa\le \sqrt{N} v_j(z,\zeta) \le C_{\kappa}, \quad j \in \{1,\dots, N\}, 
    \end{equation}
    and moreover, let $\Gap{F}$ denote the difference between the two largest eigenvalues of $|F| = \sqrt{FF^*}$, then $\Gap{F}$ admits the bound
    \begin{equation} \label{gapF}  
    	\Gap{F} \ge \other{\delta},
    \end{equation}
    where $\other \delta$ is a constant that depends only on the constants in conditions \eqref{cond_A}, \eqref{cond_B} and $\kappa$.
    
    Furthermore, for a fixed $\kappa>0$ and $z,\zeta \in \mathcal{D}_\kappa$ there exists a positive constant $\other{c}_\kappa$ such that
        \begin{equation} \label{F_norm}
            \opnormtwo{F(z,\zeta)} \le 1 - \other c_\kappa\left(|\im z| + |\im \zeta| \right),
        \end{equation}
%    Moreover, there exists a constant $C'$ such that if $|z|>C'$ or $|\zeta| > C'$, then 
%    \begin{equation} \label{F_small_far}
%    	\opnormtwo{F(z,\zeta)} \le 1/2.
%    \end{equation}
    \end{prop}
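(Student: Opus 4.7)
The matrix $F(z,\zeta)$ is symmetric with strictly positive entries of order $N^{-1}$: indeed $S_{jk} \sim N^{-1}$ by \eqref{cond_A}, and $|m_j(z)|, |m_j(\zeta)| \sim 1$ in the relevant range by \eqref{m_bound}. The classical Perron--Frobenius theorem therefore yields the first assertion: $\opnormtwo{F(z,\zeta)}$ is a simple positive eigenvalue with a strictly positive $\ell^2$-normalized eigenvector $\vv(z,\zeta)$.

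For the sharp norm bound I would first treat the coincident case $z=\zeta$. Setting $D := \diag{|\m(z)|}$, we have $F(z,z) = D S D$, and the eigenvalue equation $F(z,z)\vv = \lambda\vv$ is equivalent to $D^2 S \vect{p} = \lambda \vect{p}$ for $\vect{p} := D\vv$. Taking imaginary parts in the vector Dyson equation \eqref{VD_eq} and working with $|\im \m|$ (which is positive for both signs of $\im z$) produces the vector identity
\begin{equation*}
D^2 S\, |\im \m| \;=\; |\im \m| \;-\; |\m|^2 \, |\im z|.
\end{equation*}
Pairing this with $\vect{p}$, using the symmetry of $S$ and the eigenvalue equation for $\vect{p}$, one obtains
\begin{equation*}
(1-\lambda)\, \langle \vect{p},\, |\im\m|/|\m|^2 \rangle \;=\; |\im z|\, \langle \vect{p},\, \vect{1} \rangle,
\end{equation*}
which after substituting $\vect{p} = D\vv$ becomes the exact formula
\begin{equation*}
\opnormtwo{F(z,z)} \;=\; 1 \;-\; |\im z|\,\frac{\langle \vv(z,z),\, |\m(z)|\rangle}{\langle \vv(z,z),\, |\im \m(z)|/|\m(z)|\rangle}.
\end{equation*}
For distinct $z,\zeta$ I would then invoke the entry-wise AM--GM inequality $F(z,\zeta)_{jk} = \sqrt{F(z,z)_{jk}\, F(\zeta,\zeta)_{jk}} \le \tfrac{1}{2}(F(z,z)_{jk} + F(\zeta,\zeta)_{jk})$, together with monotonicity of the $\ell^2 \to \ell^2$ norm under entry-wise domination of non-negative matrices, to deduce the claimed upper bound by summing the two coincident-parameter formulas.

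The remaining estimates \eqref{v_bound}, \eqref{gapF}, and \eqref{F_norm} are then quantitative Perron--Frobenius facts. The two-sided bound $F_{jk} \sim N^{-1}$ together with $F\vv = \opnormtwo{F}\vv$ and $\norm{\vv}_2 = 1$ forces $v_j \sim N^{-1/2}$ uniformly, giving \eqref{v_bound}; the gap \eqref{gapF} follows from Hopf's contraction theorem applied to the positive symmetric matrix $F$, whose contraction rate in the Birkhoff metric depends only on the ratio of maximal to minimal entries (both of which are of order $N^{-1}$ in the bulk). Finally, \eqref{F_norm} is immediate from the explicit formula above, because on $\mathcal{D}_\kappa$ the quantities $|\m|$, $|\im \m|$, and $|\im \m|/|\m|$ are all uniformly of order $1$ while $\vv$ is comparable to $N^{-1/2}\vect{1}$, so the ratio multiplying $|\im z|$ (respectively $|\im \zeta|$) is bounded below by a $\kappa$-dependent positive constant.

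\textbf{Main obstacle.} The delicate ingredient is the exact eigenvalue identity for $F(z,z)$: one must correctly handle the non-symmetric similarity $F(z,z) \leftrightarrow D^2 S$, recognize $|\im \m|$ as a near-eigenvector forced by the Dyson equation, and test it against the genuine Perron eigenvector to extract a clean scalar relation. Everything else bootstraps from this identity via the Schur-type entry-wise bound and standard quantitative Perron--Frobenius tools applied to a positive symmetric matrix with entries uniformly of order $N^{-1}$.
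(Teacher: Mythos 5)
Your plan is correct and coincides with the standard argument behind this proposition, which the paper itself does not prove but imports by citation from \cite{Landon2021Wignertype} and \cite{Erdoes2019dyson}: the entry-wise identity $F(z,\zeta)_{jk}=\sqrt{F(z,z)_{jk}F(\zeta,\zeta)_{jk}}$ plus AM--GM and monotonicity of the norm for non-negative matrices reduces everything to the coincident-parameter case, where the imaginary part of \eqref{VD_eq} gives the exact norm formula, and \eqref{v_bound}, \eqref{gapF}, \eqref{F_norm} follow by quantitative Perron--Frobenius exactly as you describe. The only cosmetic point is in the pairing step: to get your scalar relation $(1-\lambda)\langle \vect{p},|\im\m|/|\m|^2\rangle=|\im z|\langle\vect{p},\vect{1}\rangle$ one should first divide the displayed vector identity coordinate-wise by $|\m|^2$ and then test against $\vect{p}=D\vv$, using $S\vect{p}=\lambda D^{-2}\vect{p}$; paired as literally written the left-hand side is $\langle SD^{2}\vect{p},|\im\m|\rangle$, which is not directly $\lambda\langle\vect{p},|\im\m|\rangle$.
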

    \begin{prop} (Proposition 4.6 and Lemma 4.7 in \cite{Landon2021Wignertype}) \label{Smm_prop} Let $z,\zeta \in \mathbb{C}$, such that $|z|,|\zeta| \lesssim 1$ and $\re z, \re\zeta \in \bulk_\kappa$, then
	\begin{equation} \label{Smm_bound}
		\opnormtwo{(1-S\m(z)\m(\zeta))^{-1}} + \opnorminf{(1-S\m(z)\m(\zeta))^{-1}}\lesssim \frac{1}{|\im z| + |\im \zeta|}.
	\end{equation}
	%and
%	\begin{equation} \label{mmS_bound}
%		\opnormtwo{(1-\m(z)\m(\zeta)S)^{-1}} +\opnorminf{(1-\m(z)\m(\zeta)S)^{-1}}\lesssim \frac{1}{|\im z| + |\im \zeta|}.
%	\end{equation}
	 If additionally $\im z \im\zeta > 0$, the estimate is improved to
	\begin{equation} \label{norm in same half plane}
		\opnorminf{(1-S\m\other\m)^{-1}} \le C_\kappa,
	\end{equation}
	where $C_\kappa>0$ is a positive constants dependent on $\kappa$. 
	\end{prop}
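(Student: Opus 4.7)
My plan is to reduce the non-self-adjoint stability operator $1-S\m(z)\m(\zeta)$ to the symmetric positive saturated self-energy operator $F(z,\zeta)$ from \eqref{F_def} by a diagonal similarity, and then transfer the spectral bounds from Proposition \ref{F_prop} to the stability operator. Denoting $M := \diag{\m(z)\m(\zeta)}$ and $D := M^{1/2}$ for a fixed choice of branch (well-defined since $m_j(z)m_j(\zeta)\neq 0$ for all $j$), a direct entrywise computation gives
\begin{equation*}
	D\bigl(1-SM\bigr)D^{-1} = 1 - DSD, \qquad |(DSD)_{jk}| = F_{jk}.
\end{equation*}
Factorizing $D = |D|U$ with a diagonal unitary $U$ carrying the phases of $\sqrt{m_j(z)m_j(\zeta)}$, we have $DSD = UFU$, and the unitarity of $U$ gives $\opnormtwo{DSD}=\opnormtwo{F}$. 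Applying the triangle inequality inductively to the same factorization yields the key entrywise domination
\begin{equation*}
	\bigl|\bigl((DSD)^n\bigr)_{jk}\bigr| \le (F^n)_{jk}, \qquad \bigl|\bigl((1-DSD)^{-1}\bigr)_{jk}\bigr| \le \bigl((1-F)^{-1}\bigr)_{jk},
\end{equation*}
where the second inequality follows from the Neumann series, which converges since $\opnormtwo{F}<1$ by \eqref{F_norm}.

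Given this reduction, the $\ell^2$ part of \eqref{Smm_bound} is immediate: the similarity $(1-SM)^{-1} = D^{-1}(1-DSD)^{-1}D$, combined with \eqref{F_norm} and the uniform bounds on $\m,\,1/\m$ from Lemma \ref{m_lemma}, gives
\begin{equation*}
	\opnormtwo{(1-SM)^{-1}} \le \opnormtwo{D^{-1}}\,\bigl(1-\opnormtwo{F}\bigr)^{-1}\,\opnormtwo{D} \lesssim \frac{1}{|\im z|+|\im\zeta|}.
\end{equation*}
For the $\ell^\infty$ half, the entrywise domination reduces matters to estimating $\opnorminf{(1-F)^{-1}}$. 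Since $(1-F)^{-1} = \sum_{n\ge 0} F^n$ has nonnegative entries and $(1-F)^{-1}\vv = (1-\opnormtwo{F})^{-1}\vv$, a Schur-type bound using the comparability $v_j\sim N^{-1/2}$ from \eqref{v_bound} gives, for every $j$,
\begin{equation*}
	\sum_k \bigl((1-F)^{-1}\bigr)_{jk} \le \frac{1}{\min_k v_k}\sum_k \bigl((1-F)^{-1}\bigr)_{jk} v_k = \frac{v_j}{(1-\opnormtwo{F})\min_k v_k} \lesssim \frac{1}{|\im z|+|\im\zeta|}.
\end{equation*}
Conjugating back by $D^{\pm 1}$, both of which are bounded in $\ell^\infty$ by Lemma \ref{m_lemma}, yields the $\ell^\infty$ estimate in \eqref{Smm_bound}.

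The main obstacle is the improved bound \eqref{norm in same half plane} in the same-half-plane regime $\im z\im\zeta>0$. The entrywise domination above cannot recover it, since $F_{jk}$ depends only on $|\m(z)\m(\zeta)|$ and is insensitive to the relative signs of $\im z$ and $\im\zeta$. To capture the improvement one has to exploit the sign structure of $\im\m$: taking imaginary parts in the vector Dyson equation \eqref{VD_eq} yields the pointwise identity $\im\m(z) = |\m(z)|^2\bigl(\im z + S\im\m(z)\bigr)$ and its analogue at $\zeta$, and when $\im z\im\zeta>0$ the vector $\im\m(z)\im\m(\zeta)$ has entries of uniform sign. Testing $(1-S\m(z)\m(\zeta))\vect{x}=\vect{y}$ against $\vv$ and combining these two Dyson-type identities with the just-noted positivity should allow one to bound the Perron-component of $\vect{x}$ directly by $\norm{\vect{y}}_\infty$ with an $O(1)$ constant; the complementary component is $O(1)$ already by the spectral gap \eqref{gapF}. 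This sign-exploitation step is where I expect the bulk of the genuine work to lie; once it is in place, conjugating back by $D^{\pm 1}$ concludes the proof.
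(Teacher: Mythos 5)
The paper does not prove this proposition itself --- it is imported verbatim from Proposition 4.6 and Lemma 4.7 of \cite{Landon2021Wignertype} --- so your proposal has to be judged on its own merits. Your argument for \eqref{Smm_bound} is correct and complete, and it is essentially the standard one: the diagonal similarity $D(1-SM)D^{-1}=1-DSD$ with $D^2=\m(z)\m(\zeta)$, the entrywise domination $|((DSD)^n)_{jk}|\le (F^n)_{jk}$, the norm bound $1-\opnormtwo{F}\gtrsim |\im z|+|\im\zeta|$ from \eqref{F_norm}, and the Schur test through the Perron eigenvector using $v_j\sim N^{-1/2}$ from \eqref{v_bound} all fit together as you claim, and the conjugating factors $D^{\pm1}$ are harmless by \eqref{m_bound}. (A minor caveat: \eqref{F_norm} is stated in this paper only for $z,\zeta\in\mathcal{D}_\kappa$, while the proposition allows arbitrary nonreal $z,\zeta$ with $\re z,\re\zeta\in\bulk_\kappa$; the extension is in the cited source but you are implicitly using it.)

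The genuine gap is the second assertion, \eqref{norm in same half plane}, which you explicitly leave as a sketch. This bound is not a refinement of the first argument but requires a different mechanism, and your sketch does not contain the decisive ingredient. Writing $1-S\m\other\m=|\m\other\m|^{-1/2}(\mathrm{U}^*-F)|\m\other\m|^{1/2}\mathrm{U}$ as in \eqref{Smm F identity} and splitting $F=\eigF_1\vv\vv^*+A$, the Sherman--Morrison formula reduces everything to a lower bound on the scalar $\omega=1-\eigF_1\langle\vv,(\mathrm{U}^*-A)^{-1}\vv\rangle$ together with an $\ell^\infty$ bound on $(\mathrm{U}^*-A)^{-1}$. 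The reason $|\omega|\gtrsim 1$ when $\im z\im\zeta>0$ is \emph{not} merely that $\im\m(z)\im\m(\zeta)$ has entries of uniform sign; it is the quantitative fact that in the bulk $\im m_j(z)\sim\rho(z)\gtrsim 1$ and $|m_j(z)|\sim 1$, so the phases $\arg\bigl(m_j(z)m_j(\zeta)\bigr)$ are uniformly bounded away from $0$ modulo $2\pi$, whence $\re\langle\vv,\mathrm{U}^*\vv\rangle\le 1-c_\kappa$ while $\eigF_1\le 1$. Without this input the denominator could degenerate exactly as it does in the opposite-half-plane case (compare \eqref{omega_lower_bound}, where $|\omega|$ is only $\gtrsim\eta+|x-y|$). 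Your proposed route of ``testing $(1-S\m\other\m)\xx=\vect{y}$ against $\vv$'' controls only the Perron component and does not by itself produce an $\ell^\infty\to\ell^\infty$ bound; moreover the claim that the complementary component is $O(1)$ ``by the spectral gap'' is immediate only in $\ell^2$, whereas the needed $\opnorminf{(\mathrm{U}^*-A)^{-1}}\lesssim 1$ (cf.\ \eqref{norm_R}) requires a separate argument. As written, the same-half-plane bound --- which the paper genuinely uses in Remark \ref{same_half_plane_remark} --- remains unproved.
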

	Finally, we state the bounds on the stability operator in the special case of $\zeta = z$, which is related to the derivative of $\m$ via the (vector) identity $\m'(z) = (1-\m^2(z)S)^{-1}\m^2(z)$, obtained by taking the derivative of \eqref{VD_eq}.
	\begin{lemma} (Lemma 5.9 in \cite{Ajanki2019QVE}, Lemma 7.3.2 in \cite{Erdoes2019dyson}) Let $C>0$ be a positive constant, then for $z\in\mathbb{C}\backslash\mathbb{R}$ with $|z| \le C$ we have 
	\begin{equation} \label{m2S_bound}
		\opnormtwo{(1-\m^2(z)S)^{-1}} + \opnorminf{(1-\m^2(z)S)^{-1}} \lesssim |\rho(z)|^{-2},
	\end{equation}
	where $\rho(z) = \pi^{-1}\langle\im\m(z)\rangle$ is the harmonic extension of $\rho(E)$ defined in \eqref{rho_def}.
	\end{lemma}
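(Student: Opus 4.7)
My plan is to reduce the estimate for $(1-\m^2 S)^{-1}$ to the saturated self-energy operator $F(z,z) = |\m| S |\m|$ of Proposition~\ref{F_prop}. Writing $\m = U|\m|$ with $U := \m/|\m|$ a diagonal unitary matrix, and using that all three diagonal factors $|\m|$, $U$, $|\m|^{-1}$ commute, a direct entrywise computation yields the similarity
\begin{equation*}
(1 - \m^2 S)^{-1} = |\m|\,(1 - U^2 F)^{-1}\,|\m|^{-1}.
\end{equation*}
The factors $|\m|^{\pm 1}$ are uniformly bounded in both the $\ell^2 \to \ell^2$ and $\ell^\infty \to \ell^\infty$ operator norms by \eqref{m_bound}, so it suffices to establish $\opnormtwo{(1 - U^2 F)^{-1}} + \opnorminf{(1 - U^2 F)^{-1}} \lesssim \rho(z)^{-2}$.

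The key structural input is the Perron--Frobenius theory for $F$ from Proposition~\ref{F_prop}: the top eigenvalue $\lambda_1(F) = \opnormtwo{F} \le 1$ is simple, with a strictly positive $\ell^2$-normalized eigenvector $\vv$ whose entries are of order $N^{-1/2}$, and the spectral gap satisfies $\Gap{F} \ge \other\delta$ uniformly. Since $U^2$ is unitary, the singular values of $U^2 F$ coincide with those of $F$, and on the orthogonal complement of $\vv$ the operator $1 - U^2 F$ is invertible with a uniform bound on the inverse via a rank-one perturbation argument. The only destabilizing direction of $(1 - U^2 F)^{-1}$ is therefore essentially the span of $\vv$, and the matter reduces to a quantitative lower bound on $1 - \lambda_1(F)$.

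The sharp bound $1 - \lambda_1(F) \gtrsim \rho(z)^2$ is extracted from the imaginary part of the vector Dyson equation~\eqref{VD_eq}, which yields $(1 - |\m|^2 S)\im\m = (\im z)|\m|^2$. Applying the similarity $|\m|^2 S = |\m| F |\m|^{-1}$ and testing against $\vv$ gives
\begin{equation*}
(1 - \lambda_1(F))\,\langle\vv,\, |\m|^{-1}\im\m\rangle = (\im z)\,\langle\vv,\,|\m|\rangle,
\end{equation*}
which, combined with the cubic expansion of the QVE from \cite{Ajanki2019QVE} relating $\rho(z) = \pi^{-1}\langle\im\m(z)\rangle$, $\im z$, and $1-\lambda_1(F)$, upgrades the naive bound $1 - \lambda_1(F) \gtrsim |\im z|$ (available directly from Proposition~\ref{F_prop}) to the required quadratic-in-$\rho$ estimate. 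The $\ell^\infty \to \ell^\infty$ bound is handled by the same decomposition: the rank-one projector onto $\vv$ has $\ell^\infty$-operator norm $\lesssim \norm{\vv}_\infty\norm{\vv}_1 \lesssim 1$ by \eqref{v_bound}, absorbing the $\rho^{-2}$ factor, while the complementary piece is controlled via $\opnorminf{F}\lesssim 1$ (from \eqref{cond_A} and the boundedness of $|\m|$) together with the gap $\other\delta$.

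The principal obstacle throughout is the sharpness of the lower bound $1 - \lambda_1(F) \gtrsim \rho(z)^2$: it is strictly stronger than the elementary bound $1 - \lambda_1(F) \gtrsim |\im z|$ and requires the full cubic stability theory of the Dyson equation. Near spectral edges, where $\rho \sim \sqrt{|\im z|}$, the quadratic scaling is tight, whereas in the bulk it delivers the correct $\mathcal{O}(1)$ behaviour of the inverse stability operator, which would be missed by a naive argument relying solely on \eqref{F_norm}.
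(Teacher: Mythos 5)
The paper itself does not prove this lemma; it is quoted verbatim from Lemma 5.9 of \cite{Ajanki2019QVE} (see also Lemma 7.3.2 of \cite{Erdoes2019dyson}), so the comparison can only be against the argument in those references. Your reduction via the similarity $(1-\m^2S)^{-1}=|\m|(1-U^2F)^{-1}|\m|^{-1}$ with $U=\m/|\m|$ and $F=|\m|S|\m|$, and the boundedness of the diagonal conjugating factors, is correct and is indeed the route taken there. The error lies in the key quantitative step.

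Your central claim, that $1-\lambda_1(F)\gtrsim\rho(z)^2$, is false, and your own displayed identity refutes it. Testing $(1-F)\frac{\im\m}{|\m|}=\eta|\m|$ against the Perron eigenvector $\vv$ gives the \emph{exact} formula $1-\lambda_1(F)=\eta\,\langle\vv,|\m|\rangle/\langle\vv,\frac{\im\m}{|\m|}\rangle$, and since $\frac{\im m_j}{|m_j|}\sim|m_j|(\rho+\eta)$ by \eqref{|m| im m comparison}, this yields $1-\lambda_1(F)\sim\eta/(\rho+\eta)$. In the bulk ($\rho\sim1$, $\eta\to0$) this tends to $0$ while $\rho^2\sim1$, so no ``cubic expansion'' can upgrade it to $\gtrsim\rho^2$; in fact $\opnormtwo{(1-F)^{-1}}\gtrsim(\rho+\eta)/\eta$, so the untwisted operator $1-F$ is genuinely nearly singular. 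The deeper issue is that reducing the problem to a lower bound on $1-\lambda_1(F)$ is itself the wrong reduction: $U^2F$ is not normal, and after splitting $F=\lambda_1\vv\vv^*+A$ with $\opnormtwo{A}\le1-\other\delta$, the Sherman--Morrison formula shows that the size of $(1-U^2F)^{-1}$ is governed by the denominator $1-\lambda_1\langle\vv,(1-U^2A)^{-1}U^2\vv\rangle$, not by $1-\lambda_1$. The whole point of the lemma --- and the actual content of Lemma 5.9 in \cite{Ajanki2019QVE} --- is that the phases in $U^2$ make the real part of $1-\langle\vv,U^2\vv\rangle$ of size $\sim\langle\vv,(1-\cos(2\arg\m))\vv\rangle\sim\rho^2$ (since $\sin(\arg m_j)=\im m_j/|m_j|\sim\rho+\eta$), which is what produces the $\rho^{-2}$ bound; this mechanism is entirely absent from your argument. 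The final paragraph compounds the confusion by calling $1-\lambda_1(F)\gtrsim|\im z|$ a ``naive'' bound to be upgraded, when in fact it is essentially sharp up to the factor $\rho$.
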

	Therefore for all $z\in\mathbb{C}\backslash\mathbb{R}$ with $\re z\in \bulk_\kappa$ we have
	\begin{equation} \label{dm_bound}
		\norm{\m'(z)}_\infty \lesssim 1.
	\end{equation}

	\subsection{Cumulant Expansion Formula}
    \begin{lemma}(Section II in \cite{Khorunzhy1999}, Lemma 3.1 in \cite{He2017WignerCLT}) \label{cumulant_lemma}
        Let $h$ be a real-valued random variable with finite moments, let $f$ be a $C^\infty(\mathbb{R})$ function. Then for any $\ell \in \mathbb{N}$ the following expansion holds,
        \begin{equation} \label{cumulant_formula}
            \E{h\cdot f(h)} = \sum\limits_{j=0}^\ell \frac{1}{j!}c^{(j+1)}(h)\E{\frac{\mathrm{d}^j}{\mathrm{d}h^j}f(h)} + R_{\ell+1},
        \end{equation}
        where $c^{(j)}$ is the $j$-th cumulant of $h$ defined by
        $$
        c^{(j)}(h) = (-i)^j\left.\frac{\mathrm{d}^j}{\mathrm{d}t^j}\left(\log\E{e^{ith}}\right)\right|_{t=0},
        $$
        and the remainder term $R_{\ell+1}$ satisfies
        \begin{equation} \label{cumulant_error}
           |R_{\ell+1}| \le C_l \E{|h|^{\ell+2}}\sup\limits_{|x|\le M}|f^{(\ell+1)}(x)| + C_l \E{|h|^{\ell+2}\cdot \mathds{1}_{|h|>M}}\norm{f^{(\ell+1)}(x)}_\infty,
        \end{equation}
        for any $M>0$.
    \end{lemma}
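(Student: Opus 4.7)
This is a classical identity which I would approach via the characteristic function. Let $\phi(t) := \E{e^{ith}}$. Writing $f$ through its Fourier representation $f(h) = (2\pi)^{-1}\int \hat{f}(t)\,e^{ith}\,dt$, valid for Schwartz $f$ and extendable to general $C^\infty$ test functions by approximation, one computes
\begin{equation*}
\E{h f(h)} = \frac{1}{2\pi}\int_{\mathbb{R}} \hat{f}(t)\,\E{h e^{ith}}\,dt = -\frac{i}{2\pi}\int_{\mathbb{R}} \hat{f}(t)\,\phi'(t)\,dt,
\end{equation*}
where the second equality uses $\E{h e^{ith}} = -i\phi'(t)$. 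This reduces the task to a careful analysis of $\phi'(t)$ in terms of the cumulants.

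By the very definition of cumulants, $K(t) := \log\phi(t) = \sum_{j\ge 1}\frac{(it)^j}{j!}c^{(j)}(h)$ as a formal power series, whose derivative combined with $\phi' = K'\phi$ gives
\begin{equation*}
\phi'(t) = i\,\phi(t)\sum_{j\ge 0}\frac{(it)^j}{j!}c^{(j+1)}(h).
\end{equation*}
I would truncate this expansion at order $\ell$ with Lagrange (or integral) remainder whose magnitude is controlled by $\E{|h|^{\ell+2}}$, and substitute back into the Fourier integral. Each retained term then contributes
\begin{equation*}
\frac{c^{(j+1)}(h)}{j!}\cdot\frac{1}{2\pi}\int \hat{f}(t)\,(it)^j\,\phi(t)\,dt = \frac{c^{(j+1)}(h)}{j!}\,\E{f^{(j)}(h)},
\end{equation*}
since the Fourier multiplier $(it)^j$ encodes $j$-fold differentiation of $f$ under inversion. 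Summing yields the desired identity \eqref{cumulant_formula}, with $R_{\ell+1}$ expressed as a Fourier integral of $\hat{f}(t)\phi(t)$ against the Taylor remainder of $K'$.

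The principal technical obstacle is the refined two-piece remainder bound \eqref{cumulant_error}: the naive Fourier argument yields only a crude global estimate of the form $\E{|h|^{\ell+2}}\,\|f^{(\ell+1)}\|_\infty$, whereas the statement demands the sharper decomposition with the local quantity $\sup_{|x|\le M}|f^{(\ell+1)}(x)|$ multiplying the full $(\ell+2)$-nd moment, and only the tail piece picking up the global sup norm weighted by the truncated moment $\E{|h|^{\ell+2}\mathds{1}_{|h|>M}}$. To obtain this refinement I would insert a smooth cutoff $\chi_M$ equal to $1$ on $[-M,M]$ and supported in $[-2M,2M]$, apply the Fourier--cumulant computation above to $\chi_M f$ (whose relevant derivatives are controlled by $\sup_{|x|\le 2M}|f^{(\ell+1)}(x)|$), and treat the residual $(1-\chi_M)f$, which is supported in $\{|x|\ge M\}$, directly by a crude Taylor expansion producing the indicator-truncated moment. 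Combining the two contributions yields \eqref{cumulant_error}; this cutoff argument is the only nontrivial part, and it is the one executed in detail in Section~II of \cite{Khorunzhy1999}.
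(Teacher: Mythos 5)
The paper does not prove this lemma; it is quoted from \cite{Khorunzhy1999} and \cite{He2017WignerCLT}. Your skeleton (Fourier inversion, $\E{he^{ith}}=-i\phi'(t)$, $\phi'=K'\phi$ with $K=\log\phi$) is indeed the classical route to the identity \eqref{cumulant_formula}, but two of your steps would fail as written. First, you truncate $K'(t)$ "with Lagrange (or integral) remainder whose magnitude is controlled by $\E{|h|^{\ell+2}}$". The Lagrange remainder involves $K^{(\ell+2)}(\xi)$ at intermediate points $\xi$; these are not cumulants, they carry negative powers of $\phi(\xi)$, and $\log\phi$ need not even be defined where $\phi$ vanishes. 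Since your Fourier integral runs over all $t\in\mathbb{R}$, you need the bound $\bigl|\E{he^{ith}}-\phi(t)\sum_{j\le\ell}\tfrac{c^{(j+1)}}{j!}(it)^j\bigr|\le C_\ell\,\E{|h|^{\ell+2}}\,|t|^{\ell+1}$ \emph{globally} in $t$; this is precisely the nontrivial content of Section II of \cite{Khorunzhy1999}, proved there by induction on $\ell$ working directly with $\E{he^{ith}}$ rather than by Taylor-expanding $\log\phi$.

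Second, and more seriously: even granting that global bound, Fourier inversion yields a remainder of size $\E{|h|^{\ell+2}}\int|t|^{\ell+1}|\hat f(t)|\,\mathrm{d}t=\E{|h|^{\ell+2}}\,\|\widehat{f^{(\ell+1)}}\|_{L^1}$, which is \emph{not} controlled by $\|f^{(\ell+1)}\|_\infty$; your claim that the naive Fourier argument already gives $\E{|h|^{\ell+2}}\|f^{(\ell+1)}\|_\infty$ is incorrect, so the cutoff refinement is built on a bound you do not actually have. Moreover, applying the expansion to $\chi_M f$ changes the main terms to $\E{(\chi_M f)^{(j)}(h)}$, which must then be recombined with $\E{f^{(j)}(h)}$. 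The standard way to reach \eqref{cumulant_error} is a real-space argument: both sides of \eqref{cumulant_formula} are linear in $f$ and the identity is exact (with $R_{\ell+1}=0$) for polynomials of degree $\le\ell$ by the moment–cumulant recursion; writing $f$ as its degree-$\ell$ Taylor polynomial plus the integral-form remainder $r(x)=\frac{1}{\ell!}\int_0^x(x-s)^\ell f^{(\ell+1)}(s)\,\mathrm{d}s$, one bounds $R_{\ell+1}(f)=R_{\ell+1}(r)$ by splitting each expectation over $\{|h|\le M\}$ and $\{|h|>M\}$, which produces exactly the two pieces of \eqref{cumulant_error}. I would either adopt that argument for the error bound, or carry out the inductive estimate of \cite{Khorunzhy1999} in full; as it stands the proposal proves neither the global truncation bound nor the stated form of the remainder.
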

	We apply formula \eqref{cumulant_formula} with $h$ equal to the matrix element $H_{jk}$. Correspondingly, in the real case ($\beta=1$), $\Cmlnt^{(p)}$ denotes the matrix of $p$-th cumulants of $H$, $\Cmlnt^{(p)}_{jk} := \Cmlnt^{(p)}(H_{jk})$. In the complex case ($\beta=2$), $\Cmlnt^{(p)}$ is used as a notational shortcut and denotes the sum of matrices of $p$-th cumulants of real and imaginary parts of $H$, that is $\Cmlnt^{(p)}_{jk}:= \Cmlnt^{(p)}(\re H_{jk}) + \Cmlnt^{(p)}(\im H_{jk})$.

\section{Proof of the Main Result} \label{proof_sect}
	\begin{proof}[Proof of Theorem \ref{main_Th}]
		We divide the proof into two parts contained in the following propositions. We indicate their analogs in the settings of \cite{Li2021genWigner} and \cite{Landon2021Wignertype} in parenthesis. 	
		\begin{prop} (c.f. Theorem 2.2 in \cite{Li2021genWigner} and (5.76) in \cite{Landon2021Wignertype}) \label{main1}
			Let $\eta_0$, $\varepsilon_0 > 0$ and $E_0$ satisfy the assumptions of Theorem \ref{main_Th}, let $f$ be a scaled test function defined in \eqref{scaled_f}, 
			and	let $\phi(\lambda)$ be the characteristic function of $\Tr f(H) -\E{\Tr f(H)}$,
			\begin{equation} \label{phi_def}
				\phi(\lambda) := \E{\exp\{\I\lambda\left(\Tr f(H) - \E{\Tr f(H)}\right)  \}},\quad \lambda \in\mathbb{R}.
			\end{equation}
			Then its derivative $\phi'(\lambda)$ satisfies the following equation,
			\begin{equation} \label{main1_goal}
				\phi'(\lambda) = -\lambda\phi(\lambda)V(f) + \Oprec\bigl(N^{-1/2}\eta_0^{-1/2}(1+|\lambda|^4)+(1+|\lambda|)N^{-\alp}\bigr), \quad \lambda\in\mathbb{R},
			\end{equation}
			provided $c \le V(f) \le C$ for some positive $N$-independent constants $c$ and $C$.
			
			Here the variance $V(f)$ for a scaled test function $f$ is defined by
			\begin{equation} \label{variance_V}
				V(f) := \frac{1}{\pi^2}\int\limits_{\dom}\int\limits_{\dom'} \frac{\partial \other f(\zeta)}{\partial \bar \zeta}\frac{\partial \other f(z)}{\partial \bar z} \mathcal{K}(z,\zeta)\mathrm{d}\bar\zeta\mathrm{d}\zeta \mathrm{d}\bar z\mathrm{d}z,
			\end{equation}
			where for $z,\zeta \in \mathbb{C}/\mathbb{R}$ the kernel $\mathcal{K}(z,\zeta)$ is defined by
			\begin{equation} \label{kernel_K}
				\begin{split}
					\mathcal{K}(z,\zeta) :=& \frac{2}{\beta}\frac{\partial }{\partial\zeta}\Tr\biggl[\frac{\m'(z)}{\m(z)}\bigl(1-S\m(z)\m(\zeta)\bigr)^{-1}\biggr]
					\\
					&+\biggl(1-\frac{2}{\beta} \biggr) \Tr\left[S\m'(z)\m'(\zeta)\right] + \frac{1}{2}\frac{\partial^2}{\partial z\partial\zeta}\left\langle\overline{\m(z)\m(\zeta)}, \Cmlnt^{(4)}\m(z)\m(\zeta) \right\rangle,
				\end{split}
			\end{equation}
			with $\Cmlnt^{(4)}$ denoting the matrix of fourth cumulants $\Cmlnt^{(4)}_{jk}$. The integration domains $\dom, \dom'$ in \eqref{variance_V} are defined as
			\begin{equation} \label{Omega_0_defs}
				\dom := \{z\in \mathbb{C} : |\im{z}| > N^{-\alp}\eta_0 \},\quad	\dom' := \{z\in \mathbb{C} : |\im{z}| > 2N^{-\alp}\eta_0 \},
			\end{equation}
			and $\other{f}$ is the quasi-analytic extension of $f$, defined by
			\begin{equation} \label{QA_f}
				\other{f}(x+\I\eta) = \chi(\eta) \left( f(x) + \I\eta f'(x) \right),
			\end{equation}
			where $\chi :\mathbb{R} \to [0,1]$ is an even $C_c^\infty(\mathbb{R})$ function supported on  $[-1,1]$, satisfying $\chi(\eta) = 1$ for $|\eta|<1/2$.
		\end{prop}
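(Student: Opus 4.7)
The proof combines the Helffer--Sj\"ostrand representation with a cumulant expansion of every occurrence of $H_{jk}$, reducing the entire computation to two-resolvent averages controlled by Corollary \ref{cor_T_local_law}. Differentiating \eqref{phi_def} gives $\phi'(\lambda) = \I\,\E{(\Tr f(H) - \E{\Tr f(H)})\,e^{\I\lambda X}}$ with $X := \Tr f(H)-\E{\Tr f(H)}$, and the quasi-analytic extension \eqref{QA_f} lets one replace the outer $\Tr f(H)$ by $-\pi^{-1}\int_{\dom}\partial_{\bar z}\other f(z)\,\Tr G(z)\,\mathrm{d}^2 z$. Truncating to $\dom$ (with $|\im z|>N^{-\alp}\eta_0$) costs only an $N^{-D}$ remainder because $\partial_{\bar z}\other f$ carries an extra power of $\im z$ through \eqref{QA_f}.

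\textbf{Cumulant expansion.} To evaluate $\E{(\Tr G(z)-\E{\Tr G(z)})\,e^{\I\lambda X}}$, I write $z\sum_j G_{jj}(z) = -N + \sum_{j,k} H_{jk}G_{kj}(z)$ and apply Lemma \ref{cumulant_lemma} to each factor $\E{H_{jk}\,(\cdot)}$. The derivative $\partial_{H_{jk}}$ acts either on a resolvent entry, producing after summation against $S_{jk}$ sums of the form \eqref{twoGdef} at a single spectral parameter, or on $e^{\I\lambda X}$, bringing down $\I\lambda\,\partial_{H_{jk}}X$; the latter, after a second Helffer--Sj\"ostrand step, generates sums \eqref{twoGdef} at distinct parameters $z,\zeta$. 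Thus every term arising in the expansion is one of the quantities controlled by Theorem \ref{th_T_local_law}.

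\textbf{Extracting $V(f)$.} The first-cumulant contributions together with the vector Dyson equation \eqref{VD_eq} reproduce the self-consistent cancellation behind the averaged local law \eqref{law2} in the presence of the extra factor $e^{\I\lambda X}$, and so remove $\E{\Tr G(z)}$ against $\Tr\m(z)$ modulo negligible errors. The remaining non-trivial part comes from $\I\lambda\,\partial_{H_{jk}}X$: substituting the deterministic approximation from \eqref{T_local_law}, whose leading part contains the inverse stability operator $(1-S\m(z)\m(\zeta))^{-1}$, integrating against $\partial_{\bar\zeta}\other f(\zeta)$, and integrating by parts in $\zeta$, yields the first line of $\mathcal{K}(z,\zeta)$ in \eqref{kernel_K}. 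The $(1-2/\beta)$ correction records the contribution of $\E{H_{jk}^2}$, absent for complex entries, and the fourth-cumulant term in $\mathcal{K}$ comes directly from the $j=3$ summand of \eqref{cumulant_formula}. Combined, these give the leading $-\lambda\phi(\lambda)V(f)$.

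\textbf{Error bookkeeping and the main obstacle.} Cumulants of order $\ge 5$ are bounded via \eqref{moment_condition} and contribute at most $(1+|\lambda|)N^{-\alp}$ after integration over $\dom$. The $|\lambda|^4$ growth of the other error records the maximum number of $\partial_{H_{jk}}$-derivatives that can fall on $e^{\I\lambda X}$ in the expansion, while the $N^{-1/2}\eta_0^{-1/2}$ factor arises from integrating the error $(\Psi(z)+\Psi(\zeta))\Psi(z)\Psi(\zeta)$ of Corollary \ref{cor_T_local_law} against the support of $\partial_{\bar z}\other f$, concentrated where $|\re z - E_0|\lesssim\eta_0$ and $|\im z|\gtrsim\eta_0 N^{-\alp}$. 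The main obstacle, and the reason Theorem \ref{th_T_local_law} rather than the weaker bound of \cite{Landon2021Wignertype} is required, is that the two-resolvent error term cannot contain a factor of $\opnormtwo{(1-S\m(z)\m(\zeta))^{-1}}\sim(|\im z|+|\im\zeta|)^{-1}$: otherwise the $\mathrm{d}\im z\,\mathrm{d}\im\zeta$ integration would diverge on scales $\eta_0\ll N^{-1/5}$. Removing this factor via the contour-integral projector isolating the destabilizing eigendirection (Section \ref{Curly T section}) is exactly what unlocks the full mesoscopic regime $\eta_0\gg N^{-1}$ needed here.
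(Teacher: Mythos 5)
Your proposal is correct and follows essentially the same route as the paper: Helffer--Sj\"ostrand representation with truncation of the ultra-local scales, cumulant expansion of $\E{H_{jk}(\cdot)e^{\I\lambda X}}$ generating the two-resolvent quantities of \eqref{twoGdef}, substitution of the deterministic approximation from Corollary \ref{cor_T_local_law} followed by integration by parts in $\zeta$ to produce \eqref{kernel_K}, and the correct identification of the key point that the error in the two-resolvent law must be free of the factor $\opnormtwo{(1-S\m(z)\m(\zeta))^{-1}}$. Two minor bookkeeping remarks: the self-consistent step must be run entry-wise and closed by inverting $(1-S\m^2(z))$, which is what produces the weight $\m'(z)/\m(z)$ in front of $T(z,\zeta)$ (the paper's \eqref{fluct_tr_G}, via $\m'/\m^2=(1-S\m^2)^{-1}\vect{1}$) rather than a plain trace of $T$; and the $(1+|\lambda|)N^{-\alp}$ error originates from the ultra-local truncation in \eqref{phi'_Omega_int} rather than from the high-order cumulants, though neither point changes the stated bound.
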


		\begin{prop} (c.f. Lemma 6.7 in \cite{Landon2021Wignertype}) \label{main2}
			Let $E_0, \eta_0$ satisfy the conditions of Theorem \ref{main_Th}. 
			Let $f$ be the scaled test function with $g\in C_c^2(\mathbb{R})$ given in \eqref{scaled_f}, and let $V(f)$ be the variance defined in \eqref{variance_V}, then 
			\begin{equation}
				V(f) = \frac{1}{2\beta\pi^2}\norm{g}_{\dot{H}^{1/2}}^2%\cdot(1+N^{-1})
				 +\bigO{\eta_0\log N + N^{-\twoalp}}.
			\end{equation}
		\end{prop}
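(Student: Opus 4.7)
I split $V(f) = V_1(f)+V_2(f)+V_3(f)$ along the three terms of $\mathcal{K}$ in \eqref{kernel_K} and handle them separately. By \eqref{dm_bound} and the $(1-\m^2S)^{-1}$-bound \eqref{m2S_bound} applied twice (to control $\m''$), both $\mathcal{K}_2(z,\zeta) = (1-2/\beta)\Tr[S\m'(z)\m'(\zeta)]$ and $\mathcal{K}_3(z,\zeta) = \frac{1}{2}\partial_z\partial_\zeta\langle\overline{\m(z)\m(\zeta)},\Cmlnt^{(4)}\m(z)\m(\zeta)\rangle$ are uniformly bounded on the bulk domain. Combining this with the standard bound $|\bar\partial\other{f}(z)|\lesssim|\im z|\,\|f''\|_\infty + \mathbf{1}_{|\im z|\in[1/2,1]}(\|f\|_\infty+\|f'\|_\infty)$ on the quasi-analytic extension together with the scaling $\|f^{(k)}\|_\infty\sim\eta_0^{-k}\|g^{(k)}\|_\infty$, a direct integration yields $V_2(f)+V_3(f)=\bigO{\eta_0}$.

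The core of the proof is to identify the singular structure of $\mathcal{K}_1$ as $\zeta\to\bar z$ with $\re z$ in the bulk, since elsewhere $(1-S\m(z)\m(\zeta))^{-1}$ is bounded by \eqref{norm in same half plane} or \eqref{Smm_bound} and the contribution is lower order. From Proposition \ref{F_prop} and the spectral gap \eqref{gapF} I write the spectral decomposition
\[
\bigl(1-S\m(z)\m(\zeta)\bigr)^{-1} = \frac{\Pi(z,\zeta)}{1-\lambda_*(z,\zeta)} + \mathcal{R}(z,\zeta),
\]
where $\lambda_*(z,\zeta)$ is the principal eigenvalue of $S\m(z)\m(\zeta)$ tending to $1$ as $\zeta\to\bar z$ with $\re z$ in the bulk, $\Pi$ is the associated rank-one spectral projector, and $\mathcal{R}$ is a uniformly bounded remainder. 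Expanding $\lambda_*$ and $\Pi$ to first order in $(\zeta-\bar z)$ and using the identities obtained from differentiating \eqref{VD_eq} (in particular $\m'(z) = (1-\m^2(z)S)^{-1}\m^2(z)$), the non-universal factors cancel and one obtains
\[
\mathcal{K}_1(z,\zeta) = -\frac{2/\beta}{(\zeta-\bar z)^2} + (\text{bounded remainder}),
\]
uniformly on the mesoscopic window $|\re z - E_0|, |\re\zeta - E_0|\lesssim\eta_0$.

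Substituting this expansion into $V_1(f)$, rescaling $z = E_0+\eta_0 w$ and $\zeta = E_0+\eta_0 w'$ (so that $\bar\partial\other{f}(z) = \eta_0^{-1}\bar\partial\other{g}(w)$ wherever $\chi = 1$), and combining the two singular regions $\im z\im\zeta<0$ via complex conjugation, the leading contribution becomes a universal integral in $(w,w')$ which, by the standard Helffer-Sjöstrand identity used in the Wigner CLT proofs of \cite{He2017WignerCLT,Li2021genWigner}, evaluates to $\frac{1}{2\beta\pi^2}\norm{g}_{\dot{H}^{1/2}}^2$. The $\bigO{\eta_0\log N}$ error collects the contribution of the bounded remainder of $\mathcal{K}_1$ integrated against $|\bar\partial\other{f}|^2$ over $|\im z|\in[N^{-\alp}\eta_0,1]$, with the logarithm arising from the scale integration $\int|\im z|^{-1}d|\im z|$. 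The $\bigO{N^{-\twoalp}}$ error collects the boundary contributions from the cutoff $\chi$ at $|\im z|\in[1/2,1]$ and from the discrepancy between $\dom$ and $\dom'$ near $|\im z|\sim N^{-\alp}\eta_0$.

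\textbf{Main obstacle.} The algebraic identification of the universal coefficient $-2/\beta$ in the singular expansion of $\mathcal{K}_1$ is the crux. For generalized Wigner matrices \cite{Li2021genWigner} the destabilizing eigenvector of $S\m(z)\m(\zeta)$ at $\zeta = \bar z$ is explicitly proportional to the constant vector, permitting a direct computation via the Sherman-Morrison formula. For Wigner-type matrices $\vv(z,\zeta)$ is only characterized implicitly through the eigenvalue equation for $F(z,\zeta)$; the required cancellation of non-universal factors must therefore be extracted by combining the vector Dyson equation, its $z$-derivative, and the symmetry properties of $F$ from Proposition \ref{F_prop}, analogously to how the implicit projector is handled in the proof of Theorem \ref{th_T_local_law}.
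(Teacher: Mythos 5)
Your high-level strategy is aligned with the paper's on the conceptual point: the singular part of $\mathcal{K}$ comes from the rank-one destabilizing direction of $(1-S\m\other{\m})^{-1}$, and the resulting second-order pole as $\zeta\to\bar z$ produces the $\dot H^{1/2}$ norm. However, there are two genuine gaps. The first is the error accounting. You propose to integrate the bounded parts of the kernel (and the remainder of $\mathcal{K}_1$) "directly" against $\bar\partial\other{f}(z)\,\bar\partial\other{f}(\zeta)$. Since $\norm{\bar\partial\other{f}}_{L^1}\sim\norm{f''}_1\sim\eta_0^{-1}$, an absolute-value bound on such an integral gives $\bigO{\eta_0^{-2}}$ for an $\mathcal{O}(1)$ kernel, not $\bigO{\eta_0}$; and your stated mechanism for the $\eta_0\log N$ term ("the logarithm arising from $\int|\im z|^{-1}\mathrm{d}|\im z|$") is inconsistent with a bounded remainder. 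The paper's resolution is precisely its Lemma \ref{V_good_formula}: it introduces an antiderivative $\mathcal{L}$ with $\partial_z\partial_\zeta\mathcal{L}=\mathcal{K}$ (built from $\log\det(1-S\m\other{\m})$) and integrates by parts four times, converting $V(f)$ into $\frac{1}{4\pi^2}\iint(f(x)-f(y))^2\,\other{\mathcal{K}}(x+\I\eta_*,y-\I\eta_*)\,\mathrm{d}x\,\mathrm{d}y$ plus controlled boundary terms. Only with the $(f(x)-f(y))^2$ weight do an $\mathcal{O}(1)$ remainder and an $\mathcal{O}(|x-y|^{-1})$ remainder integrate to $\bigO{\eta_0}$ and $\bigO{\eta_0\log N}$ respectively. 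Some equivalent symmetrization or use of holomorphy (Stokes back to the real axis) is indispensable and is missing from your plan.

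The second gap is the one you flag yourself as the "main obstacle": the identification of the universal coefficient in front of $(\zeta-\bar z)^{-2}$. This is not a detail that follows "analogously" to the proof of Theorem \ref{th_T_local_law}; it requires the quantitative perturbative inputs \eqref{v,psi_pert}--\eqref{<>formula}, in particular the first-order expansion \eqref{omega_asympt} of $\omega=1-\eigF_1\langle\vv,R\vv\rangle$ in $(x-y)$ and the identity \eqref{<>formula} showing that $\langle\vv\frac{\m'}{\m}\vv\rangle$ has a purely imaginary boundary value $\frac{\I\pi}{2}\rho(x)\norm{\im\m/|\m|}_2^{-2}$; it is exactly the cancellation of this non-universal factor between the numerator $\langle\vv,R\frac{\m'}{\m}\mathrm{U}^*R\vv\rangle\langle\vv,RF\frac{\other{\m}'}{\other{\m}}R\vv\rangle$ and the denominator $\omega^2$ that yields $2|x-y|^{-2}$. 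Moreover, the resulting remainder is only $\mathcal{O}(|x-y|^{-1})$ (from the $\mathcal{O}(\omega^{-1})$ cross terms), not bounded as you claim, and this is the actual source of the $\log N$ in the error. As written, the crux of the proposition is asserted rather than proved.
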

		Proposition \ref{main2} implies that $V(f)$ satisfies the condition of Proposition \ref{main1}, hence 
		\begin{equation}
			\phi'(\lambda) = -\lambda\phi(\lambda)V(f) + \smallo{1},
		\end{equation}
		as $N \to \infty$, for any fixed $\lambda \in \mathbb{R}$.
		It then follows by L\'evy's continuity theorem that $\Tr f(H) -\E{\Tr f(H)}$ converges in distribution to a centered Gaussian with variance $(2\beta\pi^2)^{-1}\norm{g}_{\dot{H}^{1/2}}^2$.
		Therefore, to establish Theorem \ref{main_Th}, it suffices to show that Propositions \ref{main1} and \ref{main2} hold, which is done in Sections \ref{proof_of_main1} and \ref{proof_of_main2}, respectively.
	\end{proof}

	\begin{remark}
		We restrict the proof to the real symmetric ($\beta =1$) matrices for the sake of presentation. The complex Hermitian ($\beta=2$) case differs solely in replacing the cumulant expansion formula (Lemma \ref{cumulant_lemma}) with its complex analog. The obvious modifications are left to the reader.
	\end{remark}

	\subsection{Characteristic Function of Linear Eigenvalue Statistics} \label{proof_of_main1}
	\begin{proof}[Proof of Proposition \ref{main1}]
	Using standard techniques of the characteristic function method imported from, e.g.,  Section 5.2 of   \cite{Landon2021Wignertype} (see also Section 4.2 of  \cite{Landon2020applCLT} and references therein), we can obtain the following series of estimates on the characteristic function of the linear eigenvalue statistics $\phi(\lambda)$ and its derivative $\phi'(\lambda)$.  The proof is a relatively straightforward modification of similar arguments in \cite{Landon2021Wignertype}, so we defer it to \ref{App1}.  
	\begin{lemma} \label{standard_estimate_lemma}
		Let $\phi(\lambda)$ be the characteristic function defined in \eqref{phi_def}, then, under the conditions of Theorem \ref{main_Th}, the following estimates hold 
		\begin{equation} \label{phi'_Omega_int}
			\begin{split}
				\phi(\lambda) &= \E{\other{e}(\lambda)} + \Oprec\bigl(N^{-\alp}\bigr),\\
				\phi'(\lambda) &= \frac{\I}{\pi}\int\limits_{\dom} \frac{\partial\other{f}}{\partial\bar{z}}
				\E{\other e(\lambda) \left\{1-\Expv\right\}\left[ \Tr G(z)\right] }\mathrm{d}\bar z \mathrm{d}z + \Oprec\bigl(|\lambda|N^{-\alp}\bigr),
			\end{split}
		\end{equation}
		where
		\begin{equation} \label{tilde e}
			\other{e}(\lambda) := \exp\biggl\{\frac{\I\lambda}{\pi} \int\limits_{\dom'}\frac{\partial \other{f}}{\partial \bar{z}}\{1-\Expv\}\left[\Tr G(z)\right] \mathrm{d}\bar z \mathrm{d}z \biggr\}.
		\end{equation}
		Furthermore, for all $z \in \mathcal{D}_\kappa$, we have
		\begin{equation} \label{trace_fluctuation}
			\begin{split} 
				\E{\other e(\lambda)\left\{1-\Expv\right\}\left[ \Tr G(z)\right]}
				=& \E{\other e(\lambda)\left\{1-\Expv\right\}\mathcal{T}(z,z)}
				+\frac{2\I\lambda}{\pi} \Expv\biggl[\other e(\lambda) \int\limits_{\dom'} \frac{\partial\other{f}}{\partial\bar\zeta} \frac{\partial}{\partial\zeta}\mathcal{T}(z,\zeta) \mathrm{d}\bar \zeta \mathrm{d}\zeta \biggr] \\
				&+\frac{\I\lambda}{\pi} \E{\other e(\lambda)} \int\limits_{\dom'} \frac{\partial\other{f}}{\partial\bar\zeta} \Tr\left[S \m'(z)\m'(\zeta)\right] \mathrm{d}\bar \zeta \mathrm{d}\zeta\\
				&+\frac{\I\lambda}{2\pi}\E{\other e(\lambda) }\int\limits_{\dom'}\frac{\partial\other{f}}{\partial\bar\zeta}\frac{\partial^2 }{\partial  z\partial \zeta}\bigl\langle \overline{\m(z)\m(\zeta)}, \Cmlnt^{(4)}\m(z)\m(\zeta)\bigr\rangle \mathrm{d}\bar \zeta \mathrm{d}\zeta\\
				&+\Oprec\bigl((1+|\lambda|^4)(N\Psi(z)\Theta(z) + \Psi(z)\eta_0^{-1/2})\bigr),
			\end{split}
		\end{equation} 
		where the random function $\mathcal{T}(z,\zeta)$ is defined as
		\begin{equation} \label{Curly T def}
			\mathcal{T}(z,\zeta) :=  \Tr\biggl[\frac{\m'(z)}{\m(z)} T(z,\zeta)\biggr]. % = \sum_{b}\sum_{a\neq b} \bigl[\frac{\m'(z)}{\m(z)}S\bigr]_{ab} G_{ba}(z)G_{ab}(\zeta). 
		\end{equation}
	\end{lemma}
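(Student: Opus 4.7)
The approach I would take combines the Helffer--Sj\"ostrand functional calculus with a cumulant expansion of $\Tr G(z)$, adapting Section~5.2 of \cite{Landon2021Wignertype}; the crucial new input is the sharp local law for $T(z,\zeta)$ from Corollary~\ref{cor_T_local_law}. Parts (i)--(ii) reduce to a truncation argument, while (iii) is the analytic heart and uses the new two-resolvent local law.

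For (i) and (ii), I would begin with the Helffer--Sj\"ostrand representation
$$\Tr f(H) - \E{\Tr f(H)} = -\frac{1}{\pi}\int_{\mathbb{C}} \frac{\partial \other{f}}{\partial \bar z}\,\{1-\Expv\}\bigl[\Tr G(z)\bigr]\,d\bar z\, dz,$$
and truncate to $\dom$ (respectively $\dom'$ inside the exponent of $\other e(\lambda)$). The truncation error is $\Oprec(N^{-\alp})$: on $\mathcal{D}$ the averaged local law \eqref{law2} gives $|\{1-\Expv\}\Tr G(z)|\prec |\im z|^{-1}$, and the small gap $|\im z|\lesssim N^{-1+\tau}$ is handled by the deterministic bound $|\Tr G(z)|\le N|\im z|^{-1}$ together with an integration by parts in $\re z$ exploiting the smoothness of $\chi$ and $f$. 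Exponentiating and using $|e^{\I x}-e^{\I y}|\le |x-y|$ to compare $e(\lambda)$ with $\other e(\lambda)$ delivers (i); differentiating $\phi$ in $\lambda$ and substituting the truncated HS formula gives (ii), with the extra factor $|\lambda|$ coming from the $\Tr f(H)-\E{\Tr f(H)}$ prefactor of $\phi'(\lambda)$.

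For (iii), I would apply the cumulant expansion of Lemma~\ref{cumulant_lemma} to $\E{\other e(\lambda)\,\{1-\Expv\}\Tr G(z)}$, starting from the identity $\sum_k H_{jk}G_{kj}(z) = -1 - zG_{jj}(z)$. Each derivative $\partial_{H_{jk}}$ acts either on $G_{kj}$, yielding $-G_{kk}G_{jj} - G_{kj}^2$, which after weighting by $m_j'(z)/m_j(z)$---produced naturally when closing the Dyson equation via the identity $\m'(z) = (1-\m^2(z)S)^{-1}\m^2(z)$---contributes the $\mathcal{T}(z,z)$ term; or on $\other e(\lambda)$ itself, producing the factor $\tfrac{\I\lambda}{\pi}\int_{\dom'}\frac{\partial \other f}{\partial \bar\zeta}\,\partial_{H_{jk}}\Tr G(\zeta)\,d\bar\zeta d\zeta$ with $\partial_{H_{jk}}\Tr G(\zeta) = -2(G(\zeta)^2)_{jk}$. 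Using the isotropic local law \eqref{law1} and the definition of $T$, the resulting two-resolvent sum splits into an off-diagonal piece identified with $\partial_\zeta\mathcal{T}(z,\zeta)$ and a diagonal piece giving $\Tr[S\m'(z)\m'(\zeta)]$. The third cumulant contributes only to the error (its leading piece vanishes by the Hermitian symmetry of the expansion), and the fourth cumulant, after replacing each of the four resolvent factors by its deterministic approximation, produces the term $\tfrac{1}{2}\partial_z\partial_\zeta\langle\overline{\m(z)\m(\zeta)},\Cmlnt^{(4)}\m(z)\m(\zeta)\rangle$.

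The main obstacle is the bookkeeping of the $\lambda$-derivatives on $\other e(\lambda)$: each iteration of the cumulant expansion that hits $\other e(\lambda)$ generates an additional integral over $\dom'$ of a two-resolvent quantity whose naive size blows up as $\im\zeta\to 0$. These are precisely the averaged quantities controlled by Corollary~\ref{cor_T_local_law}, and the $\mathds{1}_{\{\im z\im\zeta<0\}}\Theta(z)\Theta(\zeta)$ improvement in \eqref{T_local_law} is essential to beat the loss in the cross--half-plane regime $\im z\im\zeta < 0$. Iterating the expansion four times and bounding every remainder via Corollary~\ref{cor_T_local_law} produces the $(1+|\lambda|^4)$-dependence appearing in the error of (iii).
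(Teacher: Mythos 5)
Your overall architecture matches the paper's: Helffer--Sj\"ostrand representation, truncation of the ultra-local scales $|\im z|\le N^{-\alp}\eta_0$ to get \eqref{phi'_Omega_int}, and then a cumulant expansion of $\E{\other e(\lambda)\{1-\Expv\}G_{jj}(z)}$ that is closed by inverting the stability operator $(1-S\m^2(z))$ and using $\m'(z)=(1-\m^2(z)S)^{-1}\m^2(z)$ to produce the weight $m_j'(z)/m_j(z)$, hence $\mathcal{T}(z,z)$; the derivative hitting $\other e(\lambda)$ produces $\partial_\zeta T_{jj}(z,\zeta)$ off the diagonal and $\Tr[S\m'(z)\m'(\zeta)]$ on the diagonal, and the fourth cumulant gives the quadrilinear term. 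This is precisely the route of Appendix A (imported from Lemmas 5.6--5.8 of \cite{Landon2021Wignertype}).

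The one substantive point where your proposal goes wrong is the claimed role of Corollary \ref{cor_T_local_law}. The statement \eqref{trace_fluctuation} deliberately keeps $\mathcal{T}(z,\zeta)$ as a \emph{random} quantity on the right-hand side; no two-resolvent local law is needed (or used) to prove this lemma. All remainders here are controlled by the single-resolvent local laws \eqref{law1}--\eqref{law2}, Lemma \ref{derivative_lemma}, Lemma \ref{int_lemma}, and the a priori bound $|\partial^p\other e(\lambda)/\partial H_{jk}^p|\prec(1+|\lambda|)^p$; the factor $(1+|\lambda|^4)$ comes from up to four derivatives landing on $\other e(\lambda)$ in the truncated cumulant expansion, each contributing one power of $\lambda$, not from "bounding remainders via Corollary \ref{cor_T_local_law}". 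The two-resolvent local law and its cross--half-plane improvement $\mathds{1}_{\{\im z\im\zeta<0\}}\Theta(z)\Theta(\zeta)$ enter only in the \emph{next} step (Section \ref{proof_of_main1}, estimates \eqref{T(z,z)_error} and \eqref{partial_estimate}), where the random $\mathcal{T}$ terms are replaced by their deterministic values. Your route is not circular, since Corollary \ref{cor_T_local_law} is proved independently, but your "main obstacle" paragraph is addressing a difficulty that does not arise at this stage. A second, minor point: the third-order cumulant terms do not vanish by symmetry; in the source they are genuinely estimated (using the $N^{-3/2}$ smallness of third cumulants together with the local law) and absorbed into the error, so that parenthetical justification should be replaced by an actual bound.
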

	 We now proceed to estimate the first two terms on the right-hand side of \eqref{trace_fluctuation} in such a way that $\E{\other{e}(\lambda)}$ factors out. By definition of the scaled test function \eqref{scaled_f}, the support of $\other{f}$ is contained inside a vertical strip centered at $E_0$ of width $\sim \eta_0$, hence we limit the further analysis to the regime $|\re\zeta - \re z| \lesssim \eta_0 \ll \epsilon$, where $\epsilon$ is defined in the statement of Theorem \ref{th_T_local_law}. 
	 We estimate the function $\mathcal{T}(z,\zeta)$ using Corollary \ref{cor_T_local_law} with weight matrix $A := \frac{\m'(z)}{\m(z)}$. It follows from the bounds \eqref{m_bound} and \eqref{dm_bound} that $\opnorminf{A} \lesssim 1$, hence for all $z,\zeta \in \mathcal{D}_\kappa$ with $\re z, \re\zeta \in \supp{f}$, 
	\begin{equation} \label{Curly T eq}
		\mathcal{T}(z,\zeta) = \Tr\biggl[ \frac{\m'(z)}{\m(z)} \bigl(1 - S\m(z)\m(\zeta) \bigr)^{-1} \bigl(S\m(z)\m(\zeta) \bigr)^2\biggr] + \mathcal{E}(z,\zeta),
	\end{equation}
	where the error term $\mathcal{E}(z,\zeta)$ is analytic in both variables and admits the bound
	\begin{equation} \label{Curly T error bounds}
		\begin{split}
			\mathcal{E}(z,\zeta) \prec N\Psi^2(z) \Psi(\zeta) + N\Psi(z)\Psi^2(\zeta) + \mathds{1}_{\{\im z\im \zeta <0\}}N\Theta(z)\Theta(\zeta)
		\end{split}.
	\end{equation} 	      
	%We carry on with the proof of Proposition \ref{main1}. 
	
	 It follows from \eqref{Curly T eq} and \eqref{Curly T error bounds} for $\zeta = z$ that
	\begin{equation} \label{T(z,z)_error}
		\E{\other{e}(\lambda) \{1-\Expv\}\left[\mathcal{T}(z,z)\right]} \prec N\Psi(z)^3,
	\end{equation}
	yielding the desired bound on the first term on the right-hand side of \eqref{trace_fluctuation}.
	
	We now estimate the second term in \eqref{trace_fluctuation}. Fix $z \in \mathcal{D}_\kappa$, and consider $\zeta$ that lie in $\dom'$ defined in \eqref{Omega_0_defs}. Differentiating \eqref{Curly T eq} with respect to $\zeta$ yields
	\begin{equation}
		\frac{\partial}{\partial\zeta}\mathcal{T}(z,\zeta) = \frac{\partial}{\partial\zeta}\Tr\biggl[\frac{\m'(z)}{\m(z)} \bigl(1 - S\m(z)\m(\zeta) \bigr)^{-1} \bigl(S\m(z)\m(\zeta) \bigr)^2\biggr] + \frac{\partial}{\partial\zeta}\mathcal{E}(z,\zeta).
	\end{equation}
	To bound the derivative of the error term $\mathcal{E}(z,\zeta)$, we use the following technical lemma.
	\begin{lemma} (Lemma 5.5 in \cite{Landon2021Wignertype}) \label{derivative_lemma}
		Let $K(z)$ be a holomorphic function on $\mathbb{C}\backslash\mathbb{R}$,
		then for all $z \in \mathbb{C}\backslash\mathbb{R}$ and any $p\in \mathbb{N}$,
		\begin{equation}
			\left| \frac{\partial^p K}{\partial z^p}(z) \right| \le C_p |\im z|^{-p} \sup\limits_{|\zeta - z| \le |\im z|/2}|K(\zeta)|,
		\end{equation}
		where $C_p>0$ is a constant depending only on $p$.
	\end{lemma}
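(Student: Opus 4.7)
The plan is to deduce this from Cauchy's integral formula for derivatives of a holomorphic function. Since $K$ is holomorphic on $\mathbb{C}\setminus\mathbb{R}$ and the closed disk of radius $r := |\im z|/2$ centered at $z$ is entirely contained in the upper (or lower) half-plane, we may freely apply Cauchy's formula on this disk.

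More concretely, I would let $\Gamma_r$ denote the positively oriented circle of radius $r = |\im z|/2$ around $z$. Then Cauchy's integral formula for the $p$-th derivative gives
\begin{equation*}
\frac{\partial^p K}{\partial z^p}(z) = \frac{p!}{2\pi \I} \oint_{\Gamma_r} \frac{K(\zeta)}{(\zeta-z)^{p+1}} \mathrm{d}\zeta.
\end{equation*}
On $\Gamma_r$, $|\zeta-z| = r$, so the integrand is bounded by $r^{-(p+1)} \sup_{|\zeta-z|=r}|K(\zeta)|$, and the length of the contour is $2\pi r$. The standard ML-estimate then yields
\begin{equation*}
\left|\frac{\partial^p K}{\partial z^p}(z)\right| \le \frac{p!}{2\pi} \cdot 2\pi r \cdot \frac{1}{r^{p+1}} \sup\limits_{|\zeta-z|=r}|K(\zeta)| \le \frac{p!\,2^p}{|\im z|^p} \sup\limits_{|\zeta-z| \le |\im z|/2}|K(\zeta)|,
\end{equation*}
which is exactly the claimed bound with $C_p = p!\,2^p$.

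There is essentially no obstacle: the only thing that needs checking is that the disk of radius $|\im z|/2$ about $z$ stays in the domain of holomorphy $\mathbb{C}\setminus\mathbb{R}$, which is immediate because any point $\zeta$ in this disk satisfies $|\im\zeta| \ge |\im z| - |\im z|/2 = |\im z|/2 > 0$, and this is precisely why the radius is chosen to be half of $|\im z|$ rather than $|\im z|$ itself.
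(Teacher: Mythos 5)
Your proof is correct and is exactly the standard argument for this lemma: the paper simply cites it from \cite{Landon2021Wignertype} without reproving it, and the cited proof is the same Cauchy-integral/ML estimate on the disk of radius $|\im z|/2$, which stays in the domain of holomorphy for precisely the reason you note. The constant $C_p = p!\,2^p$ you obtain is fine.
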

	Lemma \ref{derivative_lemma} applied to the estimate \eqref{Curly T error bounds} implies that the error term $\partial_\zeta\mathcal{E}(z,\zeta)$ admits the bound
	\begin{equation} \label{E'_error}
		\begin{split}
			\frac{\partial}{\partial\zeta}\mathcal{E}(z,\zeta) &\prec N|\im \zeta|^{-1}\bigl(\Psi(z)^2\Psi(\zeta) + \Psi(z)\Psi(\zeta)^2 +\Theta(z)\Theta(\zeta)\bigr). %\quad \im z\im\zeta <0,
			%\\   			\mathcal{E}'(z,\zeta) &= \Ost{\frac{N\Psi(z)^{3/2}\Psi(\zeta) + N\Psi(z)\Psi(\zeta)^{3/2}}{|\im \zeta|}}, \quad \im z\im\zeta > 0.
		\end{split}
	\end{equation}
	To proceed we require another technical lemma.
	\begin{lemma} (c.f. Lemma 4.4 in \cite{Landon2020applCLT}) \label{int_lemma}
		Let $f$ be the scaled test function defined in \eqref{scaled_f}. Let $\Omega$ be a domain of the form
		\begin{equation}
			\Omega := \{z\in\mathbb{C}: cN^{-\tau'}\eta_0 < |\im z| < 1, a < \re z  < b\},
		\end{equation} 
		such that $\supp{f} \subset (a,b)$ and $\tau',c$ are positive constants.
		Let $K(z)$ be a holomorphic function on $\Omega$ satisfying
		\begin{equation}
			|K(z)| \le C|\im z|^{-s},\quad z \in \Omega, 
		\end{equation}
		for some $0\le s \le 2$.
		Then there exists a constant $C' > 0$ depending only on $g$ in \eqref{scaled_f}, $\chi$ in \eqref{QA_f}, and $s$, such that
		\begin{equation} \label{int_lemma_bound}
			\biggl|\int\limits_{\Omega}\frac{ \partial\other{f}}{\partial\bar z}(x+\I y) K(x+\I y)\mathrm{d}x\mathrm{d}y \biggr| \le C C'  \eta_0^{1-s}\log N.
		\end{equation}
	\end{lemma}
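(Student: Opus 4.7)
The plan is to combine an explicit computation of $\partial_{\bar z}\other{f}$ with a dyadic split of the integration domain at the scale $|\im z|=\eta_0$, using direct pointwise estimates near the real axis and integration by parts combined with Cauchy's estimates away from it.

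First, from \eqref{QA_f} I compute
\begin{equation*}
    2\partial_{\bar z}\other{f}(x+\I y) = \I y\chi(y)f''(x) + \I\chi'(y)f(x) - y\chi'(y)f'(x).
\end{equation*}
The last two terms are supported on $\{|y|\in[1/2,1]\}$, where $|K|$ and (via Cauchy's estimate on a fixed-radius disk) $|K'|$ are uniformly bounded. Using $\|f\|_1=\eta_0\|g\|_1$ together with one integration by parts in $x$ applied to the $f'$-term (whose boundary terms vanish because $\supp{f}\subset(a,b)$), these two ``edge'' contributions are $\bigO{\eta_0}\le \eta_0^{1-s}$.

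For the main piece $\tfrac{\I}{2}y\chi(y)f''(x)K(x+\I y)$ I split $\Omega=\Omega_1\cup\Omega_2$ with $\Omega_2=\Omega\cap\{|\im z|\le\eta_0\}$ and $\Omega_1=\Omega\cap\{|\im z|\ge\eta_0\}$. On $\Omega_2$ I apply the pointwise bound $|K(x+\I y)|\le C|y|^{-s}$ directly, and since $\|f''\|_1=\eta_0^{-1}\|g''\|_1$ and $\int_{cN^{-\tau'}\eta_0}^{\eta_0}|y|^{1-s}\mathrm{d}y$ is $\bigO{\eta_0^{2-s}}$ for $s<2$ (and $\bigO{\log N}$ at $s=2$), this contributes at most $\eta_0^{1-s}\log N$. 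On $\Omega_1$, since $f\in C_c^2(\mathbb{R})$ with $\supp{f}\subset(a,b)$, both boundary terms vanish after two integrations by parts in $x$, giving
\begin{equation*}
    \int_a^b f''(x) K(x+\I y)\mathrm{d}x = \int_a^b f(x) K''(x+\I y)\mathrm{d}x.
\end{equation*}
By Cauchy's estimate applied to a disk of radius $\min(|y|/2,\delta)$, where $\delta:=\dist(\supp{f},\{a,b\})>0$, I obtain $|K''(x+\I y)|\le C_\delta|y|^{-s-2}$ uniformly for $x\in\supp{f}$ and $|y|\le 1/2$. Hence
\begin{equation*}
    \int_{\Omega_1\cap\{|y|\le 1/2\}}|y\chi(y)||f(x)||K''(x+\I y)|\mathrm{d}x\mathrm{d}y \lesssim \eta_0\int_{\eta_0}^{1/2}|y|^{-s-1}\mathrm{d}y,
\end{equation*}
which evaluates to $\eta_0\log(1/\eta_0)\lesssim \eta_0\log N$ at $s=0$ and $\eta_0^{1-s}/s$ for $s\in(0,2]$. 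For the remaining piece $|y|\in(1/2,1)$, the smooth cutoff $\chi$ vanishes to infinite order at $|y|=1$, so any blow-up of the Cauchy estimate for $|K''|$ near that boundary is absorbed by $\chi$, leaving only $\bigO{\eta_0}$.

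The main technical obstacle is that neither a naive direct bound nor iterated IBP works uniformly on all of $\Omega$ across the whole range $s\in[0,2]$: direct estimates are sharp at $s=2$ but lose a factor $\eta_0^{-2}$ at $s=0$, while double IBP is sharp at $s=0$ but accumulates spurious $(cN^{-\tau'}\eta_0)^{-s}$ factors if pushed all the way down to the inner boundary $|\im z|=cN^{-\tau'}\eta_0$. Splitting at $|\im z|=\eta_0$ exactly equilibrates the two regimes, and the $\log N$ in the target bound enters naturally either from $\int y^{-1}\mathrm{d}y$ on $\Omega_2$ at $s=2$, or from $\int y^{-1}\mathrm{d}y$ after IBP on $\Omega_1$ at $s=0$; for intermediate $s$ the $\log N$ is a safety margin absorbed into $C'$.
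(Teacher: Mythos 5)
Your proof is correct and follows essentially the same strategy as the paper's (which defers to Lemma 4.4 of \cite{Landon2020applCLT} and then repeats the integration by parts): compute $\partial_{\bar z}\other f$ explicitly, dispose of the $\chi'$-supported terms, split at $|\im z|=\eta_0$, use the pointwise bound with $\norm{f''}_1\sim\eta_0^{-1}$ below that scale, and integrate by parts in $x$ combined with Cauchy estimates above it. The only cosmetic difference is that you perform the double integration by parts uniformly for all $s\in[0,2]$, whereas the paper uses a single integration by parts for $1\le s\le 2$ and reserves the double one for $0\le s<1$; both yield the stated bound.
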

	\begin{proof}[Proof of Lemma \ref{int_lemma}]
		It follows from \eqref{scaled_f} that $\norm{f}_1 \sim \eta_0, \norm{f'}_1 \sim 1, \norm{f''}_1 \sim \eta_0^{-1}$.
		In case $1\le s\le 2$ the inequality \eqref{int_lemma_bound} follows from Lemma 4.4 in \cite{Landon2020applCLT}.
		For $0\le s < 1$, the proof is conducted along the same lines, except the integration by parts is performed twice in the regime $\eta_0 \le |\im z| \le 1$.
	\end{proof}	
	
	Lemma \ref{int_lemma} and the matrix identity $(1-X)^{-1}X^2 = (1-X)^{-1} - X - 1$ yield the following expression.
	\begin{equation} \label{partial_estimate}
		\begin{split}
			\Expv\biggl[\other{e}(\lambda)\int\limits_{\dom'} \frac{\partial\other{f}}{\partial\bar \zeta}\frac{\partial\mathcal{T}}{\partial\zeta}\mathrm{d}\bar \zeta \mathrm{d}\zeta\biggr] 
			=& \E{\other{e}(\lambda)}\int\limits_{\dom'} \frac{\partial\other{f}}{\partial\bar \zeta}\frac{\partial}{\partial\zeta}\Tr\biggl[\frac{\m'(z)}{\m(z)}\bigl(1 - S\m(z)\m(\zeta) \bigr)^{-1}\biggr]  \mathrm{d}\bar \zeta \mathrm{d}\zeta \\
			-&\E{\other{e}(\lambda)}\int\limits_{\dom'} \frac{\partial\other{f}}{\partial\bar \zeta}\Tr\bigl[S\m'(z)\m'(\zeta) \bigr]  \mathrm{d}\bar \zeta \mathrm{d}\zeta \\
			+& \Oprec\bigl(N^{1/2}\Psi(z)^2\eta_0^{-1/2} + \Psi(z)\eta_0^{-1} +\Theta(z)\eta_0^{-1}\bigr),
		\end{split}
	\end{equation}
	Finally, from \eqref{trace_fluctuation} and \eqref{partial_estimate}, combined with \eqref{phi'_Omega_int} we conclude that
	\begin{equation} \label{phi'_expr_V_tilde_e}
		\phi'(\lambda) = -\lambda V(f) \E{\other e(\lambda)} + \other\Epsilon(\lambda),
	\end{equation}
	where $V(f)$ is defined in \eqref{variance_V}, and $\other\Epsilon(\lambda)$ is the total error term collected from previous derivations and integrated over $\mathrm{d}\bar{z}\mathrm{d}z$.
	Lemma \ref{int_lemma} together with error estimates in \eqref{phi'_Omega_int}, \eqref{trace_fluctuation}, \eqref{T(z,z)_error} and \eqref{E'_error} provides the following bound on the error term
	\begin{equation}
		\begin{split}
			\other{\mathcal{E}} %&= \Ost{N^{-1/4}\eta_0^{-1/4} +(1+|\lambda|^2)N^{-1/2}\eta_0^{-1/2} + (1+|\lambda|^4)N^{-1/2}\eta_0^{-1/2}} + \Ost{|\lambda|N^{-\alp}}\\
			&= \Oprec\bigl(N^{-1/2}\eta_0^{-1/2}(1+|\lambda|^4)+|\lambda|N^{-\alp}\bigr).
		\end{split}
	\end{equation}
	Under the conditions of Proposition \ref{main1} $V(f)$ is bounded, hence we conclude from the first estimate in \eqref{phi'_Omega_int} and  \eqref{phi'_expr_V_tilde_e} that \eqref{main1_goal} holds.
	This concludes the proof of Proposition \ref{main1}.
	\end{proof}
\section{Proof of the Local Laws for Two-point Functions} \label{Curly T section}
	In this section, we derive all the tools necessary to prove Theorem \ref{th_T_local_law} and its specification for the two-point function $T(z,\zeta)$, Corollary \ref{cor_T_local_law}. To make the notation more concise we introduce the convention $G \equiv G(z)$, $\other{G} \equiv G(\zeta)$, $\m \equiv \m(z)$, $\other\m \equiv \m(\zeta)$, $\other\Psi \equiv \Psi(\zeta)$, $\Psi \equiv \Psi(z)$, $\Theta \equiv \Theta(z)$, $\other\Theta \equiv \Theta(\zeta)$.
		
	For a deterministic matrix $W$ with entries $|W_{ab}| \lesssim N^{-1}$, the quantity $\sum_{a\neq y} W_{ax}G_{\alpha a}\other{G}_{a\beta}$ can be readily estimated in two special cases.
	First, if each column of $W$ is proportional to the vector of ones, i.e., $W_{ab} = w_b$ depends only on $b$, then the summation over $a$ yields $w_x ([G\other{G}]_{\alpha\beta} - G_{\alpha y}\other{G}_{y\beta})$, and the estimate follows from the resolvent identity and the local laws in Theorem \ref{local_law}.
	Second, if the entries of $X:=(1-S\m\other\m)^{-1}W$ are bounded by $CN^{-1}$, then one can obtain the estimate from Lemma \ref{self_eq_lemma} below.  We show that these two special cases are exhaustive in the sense that any $W$ can be represented as their linear combination with controlled coefficients. 
	
	To this end, we prove that in the relevant regime, the operator $(1-S\m\other\m)$ has a very small destabilizing eigenvalue and an order one spectral gap above it. Moreover, if $\Pi$ is the eigenprojector corresponding to the principal eigenvalue of $(1-S\m\other\m)$, then the $\ell^\infty \to \ell^\infty$-norm of the restriction of $(1-S\m\other\m)^{-1}$ to the kernel of $\Pi$ is also an order one quantity. 
	Finally, we show that the vector of ones $\vect{1}$ is sufficiently separated from the kernel of $\Pi$.	
	\subsection{Stable Direction Local Law} 
	 For any $N\times N$ deterministic matrix $W$, and any indices $x,y,\alpha,\beta$, we define the quantities 
	\begin{equation} \label{F_quant_def}
		\Ff{xy}{\alpha\beta}(W) := \sum_{a\neq y}W_{ax} G_{\alpha a}\other{G}_{a\beta}, \quad \mathpzc{f}_{\alpha}^{xy}(W) := m_\alpha \other{m}_\alpha ([(1-S\m\other{\m})^{-1}W]_{\alpha x} -\delta_{\alpha y}W_{\alpha x}).%, \quad \Fd{xy}{\alpha\beta} := \sum_{a \neq y}S_{ax} G_{\alpha a}\other{G}_{a\beta}.
	\end{equation}  
	We prove the following estimate.
	%
	% Self-consistent equation
	\begin{lemma} \label{self_eq_lemma}
		For any $z, \zeta \in \mathcal{D}_\kappa$ and any deterministic $N\times N$ matrix $\X$,
		\begin{equation} \label{(1-Smm)T_law}
			%[\G (1-S\m\other\m)\X ]_{yx} = [  \m\other\m S\m\other\m\X ]_{yx} + \Oprec\bigl(N\norm{X}_{\max}\Psi\other\Psi(\Psi + \other\Psi)\bigr),
			 \Ff{xy}{\alpha\beta}((1-S\m\other{\m})\X) = \delta_{\alpha\beta}\mathpzc{f}_{\alpha}^{xy}((1-S\m\other{\m})\X) + \Oprec\bigl(N\norm{X}_{\max}\Psi\other{\Psi}(\Psi+\other{\Psi})% +  \delta_{\alpha\beta}(1-\delta_{\beta y})\Lambda
			\bigr). 
		\end{equation}
	
		provided $\norm{\X}_{\max} := \max\limits_{j,k} |\X_{jk}| \lesssim 1$.
	\end{lemma}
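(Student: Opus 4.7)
My plan is to derive a self-consistent equation for $\E{\Ff{xy}{\alpha\beta}(W)}$ in a general weight $W$, and then specialize to $W = X$ at the very end. The key observation is that the factor $(1-S\m\other{\m})$ already attached to $X$ in the statement exactly absorbs the self-consistent stability structure that arises from the cumulant expansion, so that the inverse $(1-S\m\other{\m})^{-1}$ --- which would contribute the undesired $\opnormtwo{(1-S\m\other{\m})^{-1}}$ penalty in a naive estimate --- never appears in the final error.

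The starting point is the Schwinger--Dyson-type identity
\begin{equation*}
\other{G}_{a\beta} \;=\; \other{m}_a\delta_{a\beta} - \other{m}_a (H\other{G})_{a\beta} - \other{m}_a[S\other{\m}]_a\other{G}_{a\beta},
\end{equation*}
obtained by combining the resolvent equation $(\zeta - H)\other{G}=-I$ with the VDE $-1/\other{m}_a = \zeta+[S\other{\m}]_a$. Substituting this into the definition of $\Ff{xy}{\alpha\beta}(W) = \sum_{a\neq y}W_{ax}G_{\alpha a}\other{G}_{a\beta}$, the $\delta_{a\beta}$ contribution produces the source $W_{\beta x}G_{\alpha\beta}\other{m}_\beta\mathds{1}_{\beta\neq y}$, which by the isotropic local law (Theorem \ref{local_law}) reduces to the first deterministic piece $\delta_{\alpha\beta}m_\alpha\other{m}_\alpha W_{\beta x}\mathds{1}_{\beta\neq y}$. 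The term with $[S\other{\m}]_a$ is kept in algebraic form for later cancellation, and the middle term is linear in $H$ and is handled by the cumulant expansion formula (Lemma \ref{cumulant_lemma}) applied to each entry $H_{ac}$.

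The second cumulant produces four subterms from differentiating $G_{\alpha a}\other{G}_{c\beta}$ in $H_{ac}$. The subterm $G_{\alpha a}\other{G}_{cc}\other{G}_{a\beta}$ yields, after replacing $\other{G}_{cc}\approx\other{m}_c$, exactly the algebraic $[S\other{\m}]_a\other{G}_{a\beta}$ contribution identified above, producing an exact cancellation --- this is the standard mechanism by which the VDE enforces self-consistency. The subterm $G_{aa}G_{\alpha c}\other{G}_{c\beta}$ yields, after $G_{aa}\approx m_a$ and using the symmetry of $S$, a contribution of the form $\Ff{xy}{\alpha\beta}(S\m\other{\m}W) + \delta_{\alpha\beta}\delta_{\alpha y}m_y\other{m}_y[S\m\other{\m}W]_{yx}$; the first summand comes from the $c\neq y$ part of the sum, and the second summand is the $c=y$ boundary, which is nonempty because the original sum restricts only $a\neq y$ but not $c$. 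Moving $\Ff{xy}{\alpha\beta}(S\m\other{\m}W)$ to the left-hand side turns the LHS into $\Ff{xy}{\alpha\beta}((1-S\m\other{\m})W)$, and specializing $W=X$ assembles the two deterministic pieces into $\delta_{\alpha\beta}m_\alpha\other{m}_\alpha(X_{\alpha x} - \delta_{\alpha y}[(1-S\m\other{\m})X]_{\alpha x}) = \delta_{\alpha\beta}\mathpzc{f}_\alpha^{xy}((1-S\m\other{\m})X)$ as claimed.

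The remaining two second-cumulant subterms $G_{\alpha a}G_{ca}\other{G}_{c\beta}$ and $G_{\alpha a}\other{G}_{ca}\other{G}_{c\beta}$ are genuine off-diagonal triple products; bounding each entry by the entry-wise local law \eqref{law1} and summing over $a,c$ against the weight $|S_{ac}W_{ax}\other{m}_a|\lesssim N^{-1}\norm{X}_{\max}$ over $N^2$ pairs yields the claimed error $N\norm{X}_{\max}\Psi\other{\Psi}(\Psi+\other{\Psi})$. Higher cumulants ($p\ge 3$) contribute only smaller errors, each gaining a factor $N^{-(p-2)/2}$ from the moment bound \eqref{moment_condition}. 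To upgrade from expectation to the stochastic-domination statement required by the lemma, the same cumulant expansion would be run inside the $2p$-th moment of the centered quantity for arbitrary fixed $p\in\mathbb{N}$. The main technical obstacle is the combinatorial bookkeeping in that high-moment expansion: one must verify that the $(1-S\m\other{\m})$-absorption mechanism responsible for the cancellation at the level of the first moment persists at every order of the hierarchy, so that no hidden factor of $\opnormtwo{(1-S\m\other{\m})^{-1}}$ leaks back into the final estimate.
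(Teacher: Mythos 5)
Your proposal follows essentially the same route as the paper: the Schwinger--Dyson identity for $\other{G}_{a\beta}$ (the paper's \eqref{G_entry}), a cumulant expansion of the resulting $H$-linear term, cancellation of the $[S\other{\m}]_a$ term against the diagonal second-cumulant contribution, and absorption of the $\Ff{xy}{\alpha\beta}(S\m\other{\m}W)$ piece into the left-hand side to produce $(1-S\m\other{\m})$ --- this is exactly the mechanism of \eqref{general_cumul_expan} and \eqref{expan_cancellation}, and your identification of the $c=y$ boundary term and of the off-diagonal triples as the source of the $N\norm{X}_{\max}\Psi\other{\Psi}(\Psi+\other{\Psi})$ error is also what the paper does. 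The one substantive ingredient you leave out of the high-moment upgrade is the self-improving bootstrap (Lemma \ref{lemma_self_improv}): when the expansion is run against the multiplier $\mathpzc{P}$, the terms where $\partial_{ab}$ hits $\mathpzc{P}$ and the higher-cumulant remainder carry one and two factors, respectively, of whatever a priori control $\Phi$ one has on the centered quantity, so a single pass starting from $\Phi=\Lambda$ only yields $M_{2d}\prec(\Psi+\other{\Psi})\Lambda^3M_{2d-3}$, i.e., a gain of $(\Psi+\other{\Psi})^{1/3}$ rather than $(\Psi+\other{\Psi})$. The paper closes this by iterating the implication $\mathcal{X}\prec\Phi\Rightarrow M_{2d}\prec(\Psi+\other{\Psi})\Lambda\Phi^2M_{2d-3}$ until the optimal error is reached; with that addition your "combinatorial bookkeeping" concern is resolved and the argument is complete.
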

	
	We use the following self-improving mechanism for stochastic domination bounds, borrowed, e.g., from \cite{He2019Diffusion}.
	\begin{lemma} (Lemma 6.3 in \cite{He2019Diffusion}) \label{lemma_self_improv}
		Let $\mathcal{X}$ be a random variable such that $0 \le \mathcal{X} \prec N^C$ for some $C>0$, and let $\Xi \ge 0$ be a deterministic quantity. Suppose there exists a constant $q\in [0,1)$, such that for any $\Phi$ satisfying $ \Xi \le \Phi \le N^C$, and any $d\in\mathbb{N}$, we have the implication 
		\begin{equation}
			\mathcal{X} \prec \Phi\quad \Longrightarrow \quad \Expv\bigr[|\mathcal{X}|^{2d}\bigl] \prec \sum\limits_{k=1}^{2d}\bigl(\Phi^q\Xi^{1-q})^k\Expv\bigr[|\mathcal{X}|^{2d-k}\bigl],
		\end{equation}
		then $\mathcal{X} \prec \Xi$.
	\end{lemma}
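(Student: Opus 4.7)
The plan is to bootstrap from the trivial initial bound $\mathcal{X} \prec N^C$ down to $\mathcal{X} \prec \Xi$ by iterating the hypothesized moment implication. Set $\Phi_0 := N^C$ and recursively define $\Phi_{j+1} := \Phi_j^q\Xi^{1-q}$. Explicitly $\Phi_j = N^{Cq^j}\Xi^{1-q^j}$, and since $q\in[0,1)$ the exponent $q^j$ tends to zero; thus for every fixed $\varepsilon > 0$ one has $\Phi_J \le N^\varepsilon \Xi$ after $J \sim \log(C/\varepsilon)/\log(1/q)$ steps, a number independent of $N$. Along the iteration $\Phi_j$ remains in the admissible range $\Xi \le \Phi_j \le N^C$ (monotone decreasing in $j$ by the geometric-mean form of the recursion), so the hypothesis of the lemma may be invoked at each step.

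The core of the argument is the one-step implication $\mathcal{X} \prec \Phi_j \Rightarrow \mathcal{X} \prec \Phi_{j+1}$. Fix $d \in \mathbb{N}$ and let $M_{2d} := \Expv[|\mathcal{X}|^{2d}]^{1/(2d)}$. The trivial bound $\Expv[|\mathcal{X}|^{2d-k}] \le M_{2d}^{2d-k}$ (from Jensen or H\"older) combined with the hypothesized moment inequality gives
\begin{equation*}
M_{2d}^{2d} \prec \sum_{k=1}^{2d}\bigl(\Phi_j^q\Xi^{1-q}\bigr)^k M_{2d}^{2d-k}.
\end{equation*}
Dividing by $M_{2d}^{2d}$ and setting $r := \Phi_j^q\Xi^{1-q}/M_{2d}$ yields $1 \prec \sum_{k=1}^{2d}r^k$. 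Because the left-hand side is the constant $1$, this forces $r \ge N^{-\varepsilon}$ for arbitrary $\varepsilon > 0$ and large $N$; otherwise the geometric sum would be bounded by $2d \cdot N^{-\varepsilon}$, contradicting stochastic domination. Rearranging gives $M_{2d} \prec \Phi_j^q\Xi^{1-q} = \Phi_{j+1}$ for each fixed $d$, and Markov's inequality applied with $d$ sufficiently large relative to the target decay exponent $D$ upgrades this moment bound to the pointwise stochastic-domination statement $\mathcal{X} \prec \Phi_{j+1}$.

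The main obstacle is the accumulation of $N^\varepsilon$ losses across iterations: each pass through the moment-to-pointwise conversion inflates the bound by an $N^{\varepsilon_j}$ factor, and a naive iteration would let these compound. The resolution is that the number of iterations needed is the $N$-independent constant $J \sim \log(C/\varepsilon)/\log(1/q)$; choosing $\varepsilon_j := \varepsilon/J$ at each step, the total accumulated loss is a single $N^\varepsilon$, which is absorbed into the definition of $\prec$. After $J$ iterations we obtain $\mathcal{X} \prec \Phi_J \le N^\varepsilon \Xi$ for arbitrary $\varepsilon > 0$, which by Definition \ref{stochdom_def} is exactly $\mathcal{X} \prec \Xi$.
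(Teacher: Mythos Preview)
The paper does not prove this lemma; it merely quotes it from \cite{He2019Diffusion} and uses it as a black box in the proof of Lemma~\ref{self_eq_lemma}. Your proposal is the standard bootstrap argument and is correct. One minor point: your claim that $\Phi_J \le N^\varepsilon \Xi$ after $J \sim \log(C/\varepsilon)/\log(1/q)$ steps tacitly uses that $\Xi$ is polynomially bounded below in $N$ (otherwise the factor $\Xi^{-q^J}$ in $\Phi_J/\Xi = (N^C/\Xi)^{q^J}$ is not controlled by an $N$-independent number of iterations). This is automatic in every application in the paper, and is also implicit in the original formulation in \cite{He2019Diffusion}, so it is not a genuine gap.
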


	\begin{proof}[Proof of Lemma \ref{self_eq_lemma}]
		Let $\Y := (1-S\m\other{\m})\X$, then the quantity we need to estimate is $[\G \Y]_{yx} = \Ff{xy}{yy}(\Y)$. It follows from the local law in the form \eqref{law1} that 
		\begin{equation} \label{control_Lambda}
			\Ff{xy}{\alpha\beta}(\Y) \prec N\norm{X}_{\max}\Psi\other{\Psi} =: \Lambda.
		\end{equation}
		Let $\Phi$ be a deterministic control parameter admitting the bounds $(\Psi+\other{\Psi})\Lambda\le \Phi\le\Lambda$, such that 
		\begin{equation} \label{control_Phi}
			\Ff{xy}{\alpha\beta}(\Y) - \delta_{\alpha\beta}\mathpzc{f}_{\alpha}^{xy}(\Y) \prec \Phi.% +\delta_{\alpha\beta}\delta_{\beta \neq y}\Lambda =: \Phi_{\alpha\beta,y},
		\end{equation}
		%where $\delta_{\beta \neq y} := 1 - \delta_{\beta y}$.
		It follows trivially from \eqref{control_Lambda} and \eqref{control_Phi} that 
		\begin{equation} \label{control_trivial}
			\Ff{xy}{\alpha\beta}(\Y) \prec \Phi + \delta_{\alpha\beta}\Lambda.
		\end{equation} 
		Let $\partial_{jk}$ denote the partial derivative with respect to the matrix element $H_{jk}$, then the partial derivatives of $\Ff{xy}{\alpha \beta}$ are given by
		\begin{equation} \label{Ff_deriv}
			\partial_{ab}\Ff{xy}{\alpha\beta}(\Y) = -(1 + \delta_{ab})^{-1}(G_{\alpha a}\Ff{xy}{b\beta}(\Y) + G_{\alpha b}\Ff{xy}{a\beta}(\Y) +\Ff{xy}{\alpha b}(\Y) \other{G}_{a\beta} + \Ff{xy}{\alpha a}(\Y)\other{G}_{b\beta}).	
		\end{equation}
		
		We combine the vector Dyson equation \eqref{VD_eq} and the resolvent identity $zG = HG - 1$ to obtain
		\begin{equation} \label{G_entry}
			\other{G}_{a\beta} = -\other{m}_a \sum\limits_b \left( H_{ab}\other{G}_{b\beta} + S_{ab} \other{m}_b \other{G}_{a\beta} \right) + \other{m}_a\delta_{a\beta}.
		\end{equation}
		Let $d\in \mathbb{N}$, define $\mathpzc{P} \equiv \mathpzc{P}(d-1,d) := (\Ff{xy}{\alpha\beta}(\Y) - \delta_{\alpha\beta}\mathpzc{f}_{\alpha}^{xy}(\Y))^{d-1}(\overline{\Ff{xy}{\alpha\beta}(\Y) - \delta_{\alpha\beta}\mathpzc{f}_{\alpha}^{xy}(\Y)})^d$. For any $p\in \mathbb{N}$, define $M_p := \Expv\bigl[|\Ff{xy}{\alpha\beta}(\Y) - \delta_{\alpha\beta}\mathpzc{f}_{\alpha}^{xy}(\Y)|^p\bigr]$.
		Plugging \eqref{G_entry} into the definition \eqref{F_quant_def} and applying the cumulant expansion formula of Lemma \ref{cumulant_lemma}, we obtain
		\begin{subequations} \label{general_cumul_expan}
			\begin{align} 
				\Expv\bigl[\Ff{xy}{\alpha\beta}(\X)\mathpzc{P}\bigr] =
				&
				\sum_{a \neq y} m_a\other{m}_a\X_{ax} \Expv\bigl[\Ff{ay}{\alpha \beta}(S)\mathpzc{P}\bigr] 
				+\delta_{\alpha \beta} \mathpzc{f}_{\alpha}^{xy}(\Y)\Expv[\mathpzc{P}] + \delta_{\alpha \beta} \delta_{\beta y}S_{yy}m_y^2\other{m}_y^2 \X_{yx}\Expv[\mathpzc{P}] \label{expan_1}\\
				&
				+ \Expv\bigl[\sum_{a\neq y}\sum_{b} \other{m}_a \X_{ax}S_{ab} \bigl( G_{\alpha a}(\other{G}_{bb} - \other{m}_b)\other{G}_{a\beta}+ G_{\alpha b}(G_{aa} - m_a)\other{G}_{b\beta}\bigr)\mathpzc{P}\bigr] \label{expan_2}\\
				&
				+ \Expv\bigl[\sum_{a \neq y}\sum_{b\neq a} \other{m}_a \X_{ax} S_{ab} G_{\alpha a}\bigl(G_{ba} + \other{G}_{ba}\bigr)\other{G}_{b\beta}\mathpzc{P}\bigr] \label{expan_3} + R_2\\
				&
				+\sum_{a\neq y}\X_{ax}m_a\other{m}_aS_{ay}\Expv\bigl[\bigl(G_{\alpha y}\other{G}_{y\beta} - \delta_{\alpha y}\delta_{y\beta}m_y\other{m}_y\bigr)\mathpzc{P}\bigr] \label{expan_4}\\
				&
				+ \delta_{\beta \neq y}\other{m}_\beta \X_{\beta x}\Expv\bigl[(G_{\alpha\beta}-\delta_{\alpha\beta}m_\beta)\mathpzc{P}\bigr] 
				- \Expv\bigl[\sum_{a\neq y}\other{m}_a \X_{ax}G_{\alpha a}\sum_{b}S_{ab}\other{G}_{b\beta}\partial_{ab}\mathpzc{P}\bigr], \label{expan_5}
			\end{align}
		\end{subequations}
		where $R_2$ is the total error coming from the higher order cumulants, and all unrestricted summations are from $1$ to $N$.
		We successively bound the terms \eqref{expan_2}-\eqref{expan_5} appearing on the right-hand side of \eqref{general_cumul_expan}. By condition \eqref{cond_A}, local law \eqref{law1}, upper bound \eqref{m_bound}, and \eqref{control_Phi}, it follows that the terms \eqref{expan_2} and the first term in \eqref{expan_3} are bounded by $\Oprec((\Psi+\other{\Psi})\Lambda M_{2d-1})$. Similarly, the term \eqref{expan_4} and the first term in \eqref{expan_5} are bounded by $\Oprec(\norm{X}_{\max}(\Psi+\other{\Psi})M_{2d-1})$.
		
		We bound the second term in \eqref{expan_5}. It follows by \eqref{cond_A}, \eqref{law1}, bounds \eqref{m_bound}, \eqref{control_trivial}, and \eqref{Ff_deriv} that 
		\begin{equation} \label{sum_deriv_bound}
%			\sum_{b}S_{ab}\partial_{ab}\mathpzc{P} \prec (\Psi +\other{\Psi} + \delta_{\alpha a} + \delta_{a\beta})\Phi M_{2d-2}.
			\sum_{b}S_{ab}\other{G}_{b\beta}\partial_{ab}\mathpzc{P} \prec (\Psi +\other{\Psi} + \delta_{\alpha a} + \delta_{a\beta})\other{\Psi}\Phi M_{2d-2}.
		\end{equation}
		Hence, the second term in \eqref{expan_5} is bounded by $\Oprec\bigl((\Psi+\other{\Psi})\Lambda \Phi M_{2d-2}% +  \delta_{\alpha\beta}(1-\delta_{\beta y})\norm{X}_{\max}\Phi M_{2d-2}
		\bigr)$. 
		Finally, it is easy to check using estimates \eqref{cumulant_error}, \eqref{control_trivial} and identity \eqref{Ff_deriv}, together with condition \eqref{cond_A} and \eqref{m_bound}, that the error term $R_2 \prec (\Psi+\other{\Psi})\Lambda M_{2d-1} + (\Psi+\other{\Psi})\Lambda\Phi M_{2d-2} + (\Psi + \other{\Psi})\Lambda \Phi^2 M_{2d-3}$.
		
		 Observe that the first term on the right-hand side of \eqref{expan_1} can be expressed as
		\begin{equation} \label{expan_cancellation}
			\sum_{a \neq y} m_a\other{m}_a\X_{ax} \Expv\bigl[\Ff{ay}{\alpha \beta}(S)\mathpzc{P}\bigr]  = \Expv\bigl[\Ff{ay}{\alpha \beta}(\X)\mathpzc{P}\bigr] - \Expv\bigl[\Ff{ay}{\alpha \beta}(\Y)\mathpzc{P}\bigr]  - m_y\other{m}_y\X_{yx} \Expv\bigl[\Ff{yy}{\alpha \beta}(S)\mathpzc{P}\bigr], 
		\end{equation}
		where the last term is bounded by $\Oprec(N^{-1}\Lambda M_{2d-1})$. Combining \eqref{general_cumul_expan} and \eqref{expan_cancellation} yields 
		\begin{equation}
			\Expv\bigl[|\Ff{xy}{\alpha\beta}(\Y) - \delta_{\alpha\beta}\mathpzc{f}_{\alpha}^{xy}(\Y)|^{2d}\bigr] \prec \bigl(\Psi+\other{\Psi}% +  \delta_{\alpha\beta}(1-\delta_{\beta y})
			\bigr)\Lambda\Phi%_{\alpha\beta,y}
			^{2}M_{2d-3},
		\end{equation}
		for any control parameter $\Phi_{\alpha\beta,y}$ satisfying \eqref{control_Phi}. Hence, by Lemma \ref{lemma_self_improv},
		\begin{equation} \label{Ff_local_law}
			\Ff{xy}{\alpha\beta}(\Y) = \delta_{\alpha\beta}\mathpzc{f}_{\alpha}^{xy}(\Y) + \Oprec\bigl(\Lambda(\Psi+\other{\Psi})% +  \delta_{\alpha\beta}(1-\delta_{\beta y})\Lambda
			\bigr), 
		\end{equation}
		which concludes the proof of Lemma \ref{self_eq_lemma}.
	\end{proof}
\begin{remark} \label{same_half_plane_remark}
	 If $z$ and $\zeta$ are in the same (upper or lower) half-plane, Lemma \ref{self_eq_lemma} implies Theorem \ref{th_T_local_law}. Indeed, the bound \eqref{norm in same half plane} in Proposition \ref{Smm_prop} shows that provided $\eta\other{\eta} >0$, $X := (1-S\m\other{\m})^{-1}W$ satisfies $|X_{jk}| \lesssim N^{-1}$. Applying Lemma \ref{self_eq_lemma} to $X = (1-S\m\other{\m})^{-1}W$ then yields \eqref{twoG_law_1}, and \eqref{twoG_law_2} follows by summing \eqref{twoG_law_1}. We turn to the case of $z$ and $\zeta$ lying in different (upper and lower) half-planes.
\end{remark}
%	Lemma \ref{self_eq_lemma} and the statement of Proposition \ref{Curly T law} are equivalent under the additional condition that $\opnorminf{(1-S\m\other\m)^{-1}}\lesssim 1$.
%	
%	
%	As we show in the sequel, this is indeed the case when $z$ and $\zeta$ lie in the same (upper or lower) half-plane. When $z$ and $\zeta$ are in different half-planes, it need not hold  -- for $\zeta$ close to $\bar z$ the stability operator $(1-S\m\other\m)$ has an eigenvalue of order $|\im z|$. We prove, however, that this eigenvalue is isolated and that the stability operator possesses a spectral gap of order $1$. Hence the space $\mathbb{C}^N$ can be decomposed into a direct (but not necessarily orthogonal) sum of two subspaces, such that on one of them, the inverse of the stability operator is bounded by a constant. We show that the other subspace can be chosen to be the span of $\vect{1}=(1,1,\dots, 1)^t$, which us allows to finish the computation using the resolvent identity.
	%
	%	
	\subsection{Stability Operator Analysis} \label{Stab_subsection}
	In this subsection we obtain all the properties of the stability operator $\left(1-S\m(z)\m(\zeta)\right)$ that we use in combination with Lemma \ref{self_eq_lemma} to finish the proof of Theorem \ref{th_T_local_law} for $z,\zeta$ lying in opposite half-planes, as outlined in the beginning of Section \ref{Curly T section}. 
	
	For two spectral parameters $z,\zeta$, let $\eta := \im z$, and $\other{\eta} := \im \zeta$.	
	Without loss of generality, we assume in the following that $\re z \in \bulk_\kappa, \eta>0$ and $\re\zeta \in \bulk_\kappa, \other\eta<0$.
	For the remainder of this subsection, we use the following notation
	\begin{equation}
		\begin{split} \label{F_A_A_0_notation}
			F \equiv F(z) &:= |\m(z)|S|\m(z)|,\\
			\stab \equiv \stab(z,\zeta) &:= 1 - S\m(z)\m(\zeta),\\
			\stab_0 \equiv \stab_0(z) &:= 1 - S|\m(z)|^2 = |\m(z)|^{-1}(1 - F) |\m(z)|.
		\end{split}
	\end{equation}
	We view the operator $\stab$ as a perturbation of $\stab_0 = \stab(z,\bar z)$, since $|\zeta - \bar z|$ is small. We deduce the desired properties of $\stab$ from those of $\stab_0$, which, in turn, follow from the lower bound on the spectral gap of $F$ found in \eqref{gapF}.
	
	Let $\{\eigF_j\}_{j=1}^N$ denote the eigenvalues of $F$ (with multiplicity) in descending order. Then, by Perron–Frobenius theorem, the principal eigenvalue $\eigF_1$ is real, and it coincides with the spectral radius $\opnormtwo{F}$.
	Furthermore, by taking the imaginary part of the vector Dyson equation \eqref{VD_eq} and multiplying both sides by $|\m|$ coordinate-wise, we obtain
	\begin{equation} \label{1-F w}
		\bigl(1-F\bigr)\frac{\im\m}{|\m|} = \eta|\m|.
	\end{equation}

	Furthermore, by condition \eqref{cond_A}, for every $j$ we have $(S \im \m)_j \sim  \langle\im\m(z)\rangle \sim\rho(z)$, where $\rho(z)$ is the harmonic extension of the self-consistent density of states $\rho(x)$ defined in \eqref{rho_def} into $\mathbb{C}$. Hence by taking the imaginary part of \eqref{VD_eq}, we get
	\begin{equation} \label{|m| im m comparison}
		\frac{\im m_j}{|m_j|} \sim |m_j|(\rho(z) + \eta), ,\quad j\in \{1,\dots, N\}.
	\end{equation}
	Therefore, by \eqref{1-F w} and \eqref{|m| im m comparison},  $1 - \eigF_1 \lesssim \eta$.  Together with an upper bound \eqref{F_norm} on $\opnormtwo{F}$, this implies that $1 - \eigF_1 \sim \eta$.
	It follows from \eqref{gapF}  that the principal eigenvalue of $F$ is separated from the rest of the spectrum by an annulus, i.e., there exist $r>0$ and $\delta>0$ independent of $z$ and $N$ such that 
	\begin{equation} \label{F separation}
		|1 - \eigF_1| < r-\delta,\quad \text{and}\quad |1-\eigF_j|>r+\delta, \quad j \in \{2,\dots, N\}.
	\end{equation}
	
	In the remainder of this subsection, we show that for all $\zeta$ sufficiently close to $\bar z$, the eigenvalue of $\stab$ with the smallest modulus is also separated from the rest of the spectrum by an annulus of order one width.
	%\subsection{Counting Eigenvalues in a Region}
	
	Using the argument principle and Jacobi's formula, one can express the number of eigenvalues (with multiplicity) of a matrix $X$ inside a domain $\Omega$ by a contour integral
	\begin{equation} \label{NX Omega}
		N_X(\Omega) = \frac{1}{2\pi i} \oint\limits_{\partial\Omega} \Tr (w -X)^{-1} \mathrm{d}w.
	\end{equation}
	To show the eigenvalue separation for $\stab$, we begin by estimating the norm of the resolvent of $\stab$ inside the annulus
	\begin{equation} \label{annulus_def}
		\annulus_{r,\delta} := \{w\in \mathbb{C}: r-3\delta/4 \le |w| \le r+3\delta/4 \},
	\end{equation}
	with $r$ and $\delta$ as in \eqref{F separation}.
	\begin{claim} \label{A resolvent bound}
		There exists $\varepsilon_1 > 0$ and $\other{C} >0$ independent of $N$ and $z$ such that 
		\begin{equation}
			\norm{\left(w-\stab(z,\zeta)\right)^{-1}} \le \other{C}
		\end{equation}
		holds for all $w \in \annulus_{r,\delta}$ and all $\zeta$ such that $\re\zeta\in\bulk_\kappa,\im\zeta <0$ and $|\zeta-\bar z| \le \varepsilon_1$. (The norm $\norm{\cdot}$ is induced by either $\ell^2$ or $\ell^\infty$.)
	\end{claim}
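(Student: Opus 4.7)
The plan is to first establish the bound at $\zeta = \bar z$, then extend to nearby $\zeta$ via a resolvent perturbation argument. At $\zeta = \bar z$, $\stab_0 := \stab(z,\bar z) = 1 - S|\m|^2$ is similar via conjugation by $|\m|$ to the real symmetric matrix $1 - F(z,z)$ (see \eqref{F_A_A_0_notation}), so the spectra coincide. By the spectral separation \eqref{F separation} and the fact that $|\m|^{\pm 1}$ are uniformly bounded by \eqref{m_bound}, the $\ell^2$ bound $\opnormtwo{(w - \stab_0)^{-1}} \lesssim 1$ follows from the spectral theorem applied to the symmetric $F$: for every $w\in\annulus_{r,\delta}$, the distance from $w$ to $\mathrm{spec}(1-F)$ is at least $\delta/4$.

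For the $\ell^\infty$ bound at $\zeta = \bar z$, which is more delicate since $\stab_0$ is not self-adjoint, I would isolate the destabilizing direction by writing $(w - \stab_0)^{-1} = \Pi_0/(w - \mu_0) + Q_0(w)$, where $\mu_0 := 1 - \psi_1$ is the small eigenvalue and $\Pi_0 := \vr(\vl)^T$, with $\vr := |\m|^{-1}\vv$ and $\vl := |\m|\vv$ normalized so that $(\vl)^T\vr = \|\vv\|_2^2 = 1$, is the rank-one spectral projector onto the Perron--Frobenius direction. The entries of $\Pi_0$ are of order $1/N$ by \eqref{v_bound} and \eqref{m_bound}, so $\opnorminf{\Pi_0} \lesssim 1$; and $|w - \mu_0| \geq \delta/4$ since $|\mu_0| \sim \eta$ lies well inside the inner disc of the annulus (from \eqref{F_norm} and \eqref{1-F w}). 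For the reduced resolvent $Q_0(w) := (w - \stab_0)^{-1}(I - \Pi_0)$, I would use the contour representation $Q_0(w) = \frac{1}{2\pi\I}\oint_{\Gamma'}\frac{(\lambda - \stab_0)^{-1}}{w - \lambda}\,\mathrm{d}\lambda$ over a contour $\Gamma'$ enclosing the stable eigenvalues $\{\mu_j\}_{j \geq 2}\subset\{|\lambda|>r+\delta\}$ but excluding $\mu_0$ and $w$, with $\ell^\infty$ bounds on the integrand coming from a Neumann expansion on the far arc (where $|\lambda| > \opnorminf{\stab_0}$) and a careful combination of the $\ell^2$ bound with the Perron--Frobenius structure of $F$ on the near arc at distance of order $\delta$ from the outer boundary of the annulus.

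For the perturbation $\zeta \neq \bar z$, write $\stab(z,\zeta) - \stab_0(z) = S\m(z)(\overline{\m(z)} - \m(\zeta))$; by the holomorphy of $\m$ off $\mathbb R$ together with \eqref{m_bound} and the row-sum bound on $S$ from \eqref{cond_A}, this difference has norm $O(|\zeta - \bar z|)$ in both the $\ell^2\to\ell^2$ and $\ell^\infty\to\ell^\infty$ operator norms. The second resolvent identity $(w - \stab)^{-1} = (w - \stab_0)^{-1}\bigl(I - (\stab - \stab_0)(w - \stab_0)^{-1}\bigr)^{-1}$ combined with a Neumann expansion then yields the claim, provided $\varepsilon_1$ is chosen small enough that $\|(\stab - \stab_0)(w - \stab_0)^{-1}\| \leq 1/2$ in both norms.

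The main technical obstacle is the $\ell^\infty$ bound on $Q_0(w)$ in Step 2: the non-self-adjointness of $\stab_0$ means that the individual non-orthogonal spectral projectors $P_j$ for $j\geq 2$ need not be bounded uniformly in $\ell^\infty$, so one cannot simply bound $Q_0(w)$ term-by-term in its spectral expansion. The contour integral representation is the key device that sidesteps this, using only the collective spectral gap of $F$ together with the explicit Perron--Frobenius structure of the destabilizing eigendirection.
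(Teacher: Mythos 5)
Your overall architecture coincides with the paper's: establish the resolvent bound for the unperturbed operator $\stab_0=\stab(z,\bar z)$ on the annulus, then transfer it to $\stab(z,\zeta)$ through the second resolvent identity and a Neumann series, after choosing $\varepsilon_1$ so small that $\norm{\stab-\stab_0}\lesssim|\zeta-\bar z|$ falls below half the reciprocal of the unperturbed bound. The $\ell^2$ estimate on $(w-\stab_0)^{-1}$ (via similarity to the symmetric $1-F$ and the separation \eqref{F separation}) and the perturbation step are correct and are exactly what the paper does.

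The gap sits precisely at the point you flag as the main obstacle, the $\ell^\infty$ bound on $(w-\stab_0)^{-1}$, and the device you propose does not close it. First, the splitting $(w-\stab_0)^{-1}=\Pi_0/(w-\mu_0)+Q_0(w)$ is not needed here: by \eqref{F separation}, every $w\in\annulus_{r,\delta}$ is at distance at least $\delta/4$ from the \emph{entire} spectrum $\{1-\eigF_j\}_{j}$ of $\stab_0$, including the small eigenvalue $\mu_0=1-\eigF_1$, which lies inside the inner disc; isolating the Perron--Frobenius direction is only required later, for $\stab^{-1}$ itself. Second, and more seriously, the contour representation of $Q_0(w)$ is circular: any contour that encloses the stable eigenvalues (contained in $\{|\lambda|>r+\delta\}$) while excluding $w$ (contained in $\{|\lambda|\le r+3\delta/4\}$) must pass through the region $\{r+3\delta/4<|\lambda|<r+\delta\}$, and on that near arc you need exactly the uniform $\ell^\infty$ bound on $(\lambda-\stab_0)^{-1}$ that you are trying to prove; the Perron--Frobenius structure of $F$ supplies no extra control there. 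The standard repair, which also renders the whole decomposition superfluous, is a smoothing step that reduces $\ell^\infty$ to $\ell^2$ using the flatness \eqref{cond_A}: from the identity
\begin{equation*}
	(w-\stab_0)^{-1}=\frac{1}{w-1}\Bigl(1-S|\m|^2\,(w-\stab_0)^{-1}\Bigr)
\end{equation*}
together with $\norm{S|\m|^2}_{\ell^2\to\ell^\infty}\lesssim N^{-1/2}$ and $\norm{M}_{\ell^\infty\to\ell^2}\le\sqrt{N}\,\opnormtwo{M}$, one obtains $\opnorminf{(w-\stab_0)^{-1}}\lesssim 1+\opnormtwo{(w-\stab_0)^{-1}}\lesssim\delta^{-1}$, using that $|w-1|\gtrsim 1$ on $\annulus_{r,\delta}$ for the admissible choice of $r,\delta$ (e.g.\ $r=\other\delta/2$, $\delta=\other\delta/4$ with $\other\delta$ the gap from \eqref{gapF}). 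With this replacement the remainder of your argument goes through and matches the paper's proof.
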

	\begin{proof}
		Observe that $\norm{(w-\stab)^{-1}} \le \norm{ \bigl(1 - (w-\stab_0)^{-1}(\stab-\stab_0) \bigr)^{-1}}\norm{(w-\stab_0)^{-1}}$.\\
		Since $(w-\stab_0)^{-1} = -|\m|^{-1}(1-w - F)^{-1}|\m|$ and $|\m| \sim 1$, \eqref{F separation} implies that
		\begin{equation} \label{A0 resolvent bound}
			\norm{(w-\stab_0)^{-1}} \le \frac{C}{\min\limits_j|\eigF_j - w|} \le \frac{4C}{\delta},\quad w \in \annulus_{r,\delta}.
		\end{equation}
		From the uniform bounds \eqref{m_bound}, \eqref{dm_bound} on $|\m|$ and $|\m'|$ we have $\norm{\stab-\stab_0} \lesssim |\zeta-\bar z|$, which implies that there exists $\varepsilon_1 >0$ such that 
		\begin{equation} \label{perturb_norm_bound}
			\forall \zeta : |\zeta - \bar z| \le \varepsilon_1, \norm{\stab-\stab_0} \le \frac{\delta}{8C},
		\end{equation}
		where $C$ is the constant in \eqref{A0 resolvent bound}.\\
		It follows immediately that $\norm{ \bigl(1 - (w-\stab_0)^{-1}(\stab-\stab_0) \bigr)^{-1}} \le 2$ and hence 
		\begin{equation} \label{A resolvent bound 8C delta}
			\norm{(w-\stab)^{-1}} \le \frac{8C}{\delta}. \qedhere
		\end{equation}
	\end{proof}
	
	Claim \ref{A resolvent bound} implies that for any sufficiently large fixed $N$ the integrand in \eqref{NX Omega} with $X := \stab$ is uniformly bounded in $\Omega := \annulus_{r,\delta}$ for all $\zeta$ such that $|\zeta - \bar z| \le \varepsilon_1$, hence by analyticity 
	\begin{equation} \label{no_eigs_in_annulus}
		N_{\stab(z,\zeta)}(\annulus_{r,\delta}) = 0, \quad |\zeta - \bar z| \le \varepsilon_1.
	\end{equation}
	Since the eigenvalues of $\stab(z,\zeta)$ are continuous in $\zeta$, \eqref{no_eigs_in_annulus} implies that no eigenvalue can move between the two connected components of $\mathbb{C}\backslash \annulus_{r,\delta}$, which together with \eqref{F separation} yields the following claim.
	\begin{claim} \label{A separation}
		For any sufficiently large $N$, the equalities
		\begin{equation}
			\begin{split}
				N_\stab(\{|w| < r-3\delta/4\}) &=  N_{\stab_0}(\{|w| < r-3\delta/4\}) = 1, \\
				N_\stab(\{|w| > r+3\delta/4\}) &= N_{\stab_0}(\{|w| > r+3\delta/4\}) = N-1,
			\end{split}
		\end{equation}
		hold for any $\zeta$ such that $\re\zeta\in\bulk_\kappa,\im\zeta<0$ and $|\zeta - \bar z| \le \varepsilon_1$.
	\end{claim}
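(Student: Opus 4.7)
The plan is to reduce the general case to $\zeta = \bar z$ via a continuity argument driven by the resolvent bound of Claim \ref{A resolvent bound}. At $\zeta = \bar z$ the stability operator becomes $\stab(z,\bar z) = \stab_0 = |\m|^{-1}(1-F)|\m|$, so its spectrum coincides with that of $1-F$, namely $\{1-\eigF_j\}_{j=1}^N$. By the spectral gap \eqref{F separation}, exactly one of these values, $1-\eigF_1$, lies in $\{|w| < r-\delta\} \subset \{|w| < r - 3\delta/4\}$, while the remaining $N-1$ values $1-\eigF_j$ with $j \ge 2$ lie in $\{|w| > r+\delta\} \subset \{|w| > r + 3\delta/4\}$. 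This establishes the claimed equalities at the base point $\zeta = \bar z$.

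To transport this to arbitrary $\zeta$ with $\re\zeta\in\bulk_\kappa$, $\im\zeta < 0$, and $|\zeta - \bar z| \le \varepsilon_1$, I would invoke the eigenvalue-counting formula \eqref{NX Omega} with $X = \stab(z,\zeta)$ and $\Omega = \{|w| < r - 3\delta/4\}$ (and analogously, for the exterior count, with $\Omega$ taken to be $\{|w| < R\} \setminus \{|w| \le r + 3\delta/4\}$ for sufficiently large fixed $R$, noting that the total number of eigenvalues is $N$). Both contours appearing on the boundaries of these regions are circles contained in the annulus $\annulus_{r,\delta}$ of \eqref{annulus_def}, where Claim \ref{A resolvent bound} supplies the uniform bound $\opnormtwo{(w-\stab(z,\zeta))^{-1}} \le \other{C}$ simultaneously in $w$ on the contour and in $\zeta$ throughout the closed ball of radius $\varepsilon_1$ around $\bar z$.

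Given this uniform bound, the integrand $\Tr(w - \stab(z,\zeta))^{-1}$ is jointly continuous in $(w,\zeta)$ on the compact set under consideration, so the map $\zeta \mapsto N_{\stab(z,\zeta)}(\Omega)$ is continuous. Since it is integer-valued on the connected ball $\{|\zeta - \bar z| \le \varepsilon_1\}$, it is constant there, and the base case at $\zeta = \bar z$ fixes the two values as $1$ and $N-1$, respectively. The only substantive ingredient is the uniform resolvent bound, which Claim \ref{A resolvent bound} already delivers; the remainder is a routine application of the argument principle and Jacobi's formula, and the hypothesis of sufficiently large $N$ enters only to ensure that the asymptotic inputs (the $F$-gap \eqref{gapF}, the bulk condition $\re z \in \bulk_\kappa$, and the perturbative estimate \eqref{perturb_norm_bound}) are in force.
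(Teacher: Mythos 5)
Your proposal is correct and follows essentially the same route as the paper: the base case at $\zeta=\bar z$ via the similarity $\stab_0=|\m|^{-1}(1-F)|\m|$ and the gap \eqref{F separation}, combined with a continuity argument powered by the uniform resolvent bound of Claim \ref{A resolvent bound} on contours inside $\annulus_{r,\delta}$. The only cosmetic difference is that you phrase the continuity step directly through the integer-valued counting integral \eqref{NX Omega}, whereas the paper first shows $N_{\stab}(\annulus_{r,\delta})=0$ and then appeals to continuity of the eigenvalues; these are equivalent.
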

	
	Claim \ref{A separation} now allows us to define the principal eigenprojector $\Pi$ of $\stab$ as a contour integral
%	\begin{equation} \label{Pi def}
%		\Pi \equiv \Pi(z,\zeta) := 1 - \frac{1}{2\pi i} \oint\limits_{|w| = 1/r} \bigl(w - \stab(z,\zeta)^{-1} \bigr)^{-1} \mathrm{d}w.
%	\end{equation}
%	A change of variable $\xi = -1/w$ and simple residue calculus show that 
	\begin{equation} \label{Pi}
		\Pi \equiv \Pi(z,\zeta) := \frac{1}{2\pi i} \oint\limits_{|\xi| = r}(\xi - \stab(z,\zeta))^{-1}\mathrm{d}\xi.
	\end{equation}
	Claim \ref{A separation} asserts that the contour $\{ |\xi| =r\}$ encircles exactly one eigenvalue of $\stab$ with multiplicity, hence $\Pi$ is a rank one eigenprojector.
	%The following bound ensures we can apply the formula in Claim \ref{self_eq_lemma}.
	
	We now prove that the restriction of $\stab^{-1}$ to the range of $(1-\Pi)$ is bounded by a constant.
	\begin{claim} \label{(1-Smm)(1-Pi)}
		For all $z,\zeta$ such that $\re z,\re\zeta \in \bulk_\kappa$, $\im z\im\zeta <0$ and $|\zeta - \bar z| \le \varepsilon_1$,
		\begin{equation} 
			\opnorminf{\stab^{-1}(1-\Pi)} \le \other{c},
		\end{equation}
		where $\other{c}$ depends only on the constants in conditions \eqref{cond_A}, \eqref{cond_B} and $\kappa$.
	\end{claim}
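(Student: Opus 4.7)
The plan is to represent $\stab^{-1}(1-\Pi)$ as a single Cauchy integral along $\{|\xi|=r\}$ and then feed the $\ell^\infty\to\ell^\infty$ resolvent bound from Claim \ref{A resolvent bound} directly into that integral. Everything needed for both ingredients has already been assembled in Section \ref{Stab_subsection}.

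\textbf{Step 1 (Contour identity).} I first establish
\begin{equation*}
	\stab^{-1}(1-\Pi) = \frac{1}{2\pi\I}\oint_{|\xi|=r}\frac{(\stab-\xi)^{-1}}{\xi}\,\mathrm{d}\xi.
\end{equation*}
Proposition \ref{Smm_prop} guarantees that $\stab$ is invertible, so $0$ lies outside the spectrum of $\stab$; by Claim \ref{A separation} the only spectral point of $\stab$ enclosed by $\{|\xi|=r\}$ is the principal eigenvalue $\lambda_*$ captured by $\Pi$ (with $|\lambda_*|<r-3\delta/4$), while all remaining eigenvalues lie in $\{|w|>r+3\delta/4\}$. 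Applying Cauchy's residue theorem to the matrix-valued integrand $(\xi-\stab)^{-1}/\xi$, its only interior singularities are $\xi=0$ and $\xi=\lambda_*$, with residues $-\stab^{-1}$ and $\lambda_*^{-1}\Pi=\stab^{-1}\Pi$, respectively. Summing gives $-\stab^{-1}(1-\Pi)$, from which the displayed identity follows by flipping the sign inside the resolvent.

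\textbf{Step 2 (Norm bound).} The contour $\{|\xi|=r\}$ is contained in $\annulus_{r,\delta}$, and the hypothesis $|\zeta-\bar z|\le\varepsilon_1$ is exactly the one required by Claim \ref{A resolvent bound}. Therefore the resolvent on the contour is uniformly controlled in $\ell^\infty\to\ell^\infty$, and
\begin{equation*}
	\opnorminf{\stab^{-1}(1-\Pi)} \le \frac{1}{2\pi}\oint_{|\xi|=r}\frac{\opnorminf{(\stab-\xi)^{-1}}}{|\xi|}\,|\mathrm{d}\xi| \le \frac{1}{2\pi}\cdot 2\pi r\cdot\frac{\other{C}}{r} = \other{C},
\end{equation*}
which is the claimed bound, with the constant depending only on the structural data $c_{inf},C_{sup},L$ and $\kappa$.

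\textbf{Expected main obstacle.} The conceptual content sits entirely in Step 1: one must check that the single circle $\{|\xi|=r\}$ genuinely separates the destabilizing direction from the rest of the spectrum, and that the residues at $\xi=0$ and $\xi=\lambda_*$ conspire to produce exactly $\stab^{-1}(1-\Pi)$ with no leftover contribution from $\Pi$. Once this representation is in place, the $\ell^\infty\to\ell^\infty$ bound is immediate; this is precisely why Claim \ref{A resolvent bound} was phrased uniformly on the entire annulus in the $\ell^\infty$ operator norm, rather than relying only on the spectral gap of the symmetric operator $F$, which would give just an $\ell^2$ estimate.
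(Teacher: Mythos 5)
Your proposal is correct and follows essentially the same route as the paper: both represent $\stab^{-1}(1-\Pi)$ as the contour integral $-\frac{1}{2\pi\I}\oint_{|\xi|=r}\xi^{-1}(\xi-\stab)^{-1}\mathrm{d}\xi$ and then bound it using the uniform $\ell^\infty\to\ell^\infty$ resolvent estimate of Claim \ref{A resolvent bound} on the annulus, the $r$ from $|\mathrm{d}\xi|$ cancelling against $|\xi|^{-1}=r^{-1}$. The only (cosmetic) difference is that you derive the identity via the residue theorem with explicit poles at $0$ and the principal eigenvalue, while the paper reads it off directly from the definition \eqref{Pi} of $\Pi$.
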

	\begin{proof}
		By expression \eqref{Pi} for $\Pi$ we have
		\begin{equation}
			\begin{split}
				\stab^{-1}(1-\Pi) &= - \frac{1}{2\pi i} \oint\limits_{|\xi| = r}\frac{1}{\xi}(\xi - \stab)^{-1}\mathrm{d}\xi
			\end{split}
		\end{equation}
		Hence the norm of $\stab^{-1}(1-\Pi)$ is bounded by
		\begin{equation}
			\opnorminf{\stab^{-1}(1-\Pi)} \le \frac{1}{2\pi}\int\limits_0^{2\pi} \opnorminf{ \left(r e^{\I\theta} - \stab \right)^{-1}}\mathrm{d}\theta \le \frac{8C}{\delta},
		\end{equation}
		using the bound in Claim \ref{A resolvent bound} on the circle $\{|\xi| = r\}$ which lies inside $\annulus_{r,\delta}$.
	\end{proof}
	Finally, we show that the vector of ones is sufficiently separated from the kernel of $\Pi$. This ensures a stable decomposition of the space into the direct sum of the range of $(1-\Pi)$ and the span of ${\vect{1}}$, so we can apply the local laws to each of the components separately.
	\begin{claim} \label{Pi 1 > c}
		There exists $\varepsilon >0$ independent of $N$ and $z$ such that for all $\zeta$ with $\re\zeta\in\bulk_\kappa,\im\zeta<0$ and $|\zeta- \bar z| \le \varepsilon$,
		\begin{equation}
			\frac{\norm{\Pi\vect{1}}_\infty}{\opnorminf{\Pi}} \ge c,
		\end{equation}
		where $c>0$ is a constant independent of $N$ and $z$.
	\end{claim}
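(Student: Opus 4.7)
The approach is to treat the diagonal case $\zeta = \bar z$ separately, where the projector $\Pi_0 := \Pi(z,\bar z)$ admits an explicit rank-one description in terms of the Perron eigenvector of $F$, and then to extend to nearby $\zeta$ by a perturbative argument inside the contour integral \eqref{Pi}. For the base case, the factorization $\stab_0 = |\m|^{-1}(1-F)|\m|$ from \eqref{F_A_A_0_notation} together with the identity $\m(\bar z) = \overline{\m(z)}$ (from the representation \eqref{nu_def}) collapses the contour integral: since by \eqref{F separation} the unique eigenvalue of $1-F$ inside $\{|\xi|<r\}$ is the simple $1-\eigF_1$, and $F$ is symmetric with $\ell^2$-unit principal eigenvector $\vv$, I obtain
\begin{equation*}
	\Pi_0 \,=\, |\m|^{-1}\vv\vv^{*}|\m| \,=\, (|\m|^{-1}\vv)(|\m|\vv)^{*}.
\end{equation*}
Both factors $a := |\m|^{-1}\vv$ and $b := |\m|\vv$ have strictly positive entries by $|\m|\sim 1$ from \eqref{m_bound} and $\sqrt{N}\,v_j\sim 1$ from \eqref{v_bound}. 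For any rank-one $ab^{*}$ one has $\opnorminf{ab^{*}} = \|a\|_\infty\|b\|_1$, and when $b$ is non-negative, $\|(ab^{*})\vect{1}\|_\infty = \|a\|_\infty\,b^{*}\vect{1} = \|a\|_\infty\|b\|_1$. Hence $\|\Pi_0\vect{1}\|_\infty/\opnorminf{\Pi_0}$ equals \emph{exactly} $1$, with both quantities of order one.

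Next, I extend to general $\zeta$ near $\bar z$ by perturbation. The second resolvent identity applied inside \eqref{Pi} gives
\begin{equation*}
	\Pi - \Pi_0 \,=\, \frac{1}{2\pi \I}\oint\limits_{|\xi|=r}(\xi-\stab)^{-1}(\stab-\stab_0)(\xi-\stab_0)^{-1}\,\mathrm{d}\xi.
\end{equation*}
Claim \ref{A resolvent bound} and estimate \eqref{A0 resolvent bound} in its proof furnish uniform $\ell^\infty\to\ell^\infty$ control of both resolvents along $\{|\xi|=r\}\subset\annulus_{r,\delta}$. The perturbation itself, $\stab-\stab_0 = -S\,\m\,(\m(\zeta)-\overline{\m(z)})$, satisfies $\opnorminf{\stab-\stab_0}\lesssim|\zeta-\bar z|$ by combining \eqref{cond_A}, \eqref{m_bound}, and the derivative bound \eqref{dm_bound}. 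Consequently $\opnorminf{\Pi-\Pi_0} \lesssim |\zeta-\bar z|$.

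Finally, choosing $\varepsilon\in (0,\varepsilon_1]$ small enough that $\opnorminf{\Pi-\Pi_0}\le \tfrac{1}{2}\opnorminf{\Pi_0}$ whenever $|\zeta-\bar z|\le\varepsilon$, the triangle inequality yields $\|\Pi\vect{1}\|_\infty\ge \|\Pi_0\vect{1}\|_\infty - \opnorminf{\Pi-\Pi_0}$ and $\opnorminf{\Pi}\le \opnorminf{\Pi_0}+\opnorminf{\Pi-\Pi_0}$, so the base-case identity $\|\Pi_0\vect{1}\|_\infty = \opnorminf{\Pi_0}$ propagates to a uniform lower bound on the ratio. The delicate point in this plan is the base case: one needs not merely a Perron--Frobenius-type order-one estimate on $\Pi_0$, but the \emph{exact} positivity of both factors in its rank-one decomposition, which aligns the $\ell^\infty$-maximum with the $\ell^1$-mass and pins the ratio to $1$. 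Once that observation is in place, the perturbation step is a routine reuse of the resolvent bounds already established for $\stab$ and $\stab_0$ in the preceding claims.
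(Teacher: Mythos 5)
Your proof is correct and follows essentially the same route as the paper's: an explicit rank-one description of $\Pi_0=\Pi(z,\bar z)$ in terms of the Perron eigenvector of $F$ conjugated by $|\m|$, followed by a contour-integral perturbation bound $\opnorminf{\Pi-\Pi_0}\lesssim|\zeta-\bar z|$ using the resolvent estimates of Claim \ref{A resolvent bound} and \eqref{A0 resolvent bound}. The only (inessential) refinement is your observation that the ratio equals exactly $1$ at $\zeta=\bar z$; the paper merely bounds $\norm{\Pi_0\vect{1}}_\infty$ below and $\opnorminf{\Pi_0}$ above by order-one constants, which is all the perturbation step requires.
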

	\begin{proof}
		Define the projector $\Pi_0$ corresponding to $\stab_0$ via \eqref{Pi}. Then $\Pi_0 = |\m|^{-1}\other{\Pi}_0|\m|$, where $\other{\Pi}_0$ is the orthoprojector corresponding to the principal eigenvalue of the Hermitian operator $F$.\\
		Since $|\m| \sim 1$ we have $\opnorminf{\Pi_0} \le C_0$. Moreover, by Proposition \ref{F_prop}, the $\ell^2$-normalized eigenvector $\vv$ corresponding to the principal eigenvalue of $F$ has entries $\vv_j \ge 0$ with $\vv_j \sim N^{-1/2}$, hence 
		\begin{equation}
			\norm{\Pi_0\vect{1}}_\infty = \norm{|\m|^{-1}\other{\Pi}_0|\m|\vect{1}}_\infty  = \norm{|\m|^{-1}\vv}_\infty\left\langle \vv,|\m|\right\rangle \ge c_0,
		\end{equation}
		where $c_0 > 0$ is a constant independent of $N$ and $z$.
		
		Similarly to the proof of \eqref{perturb_norm_bound}, for any $\gamma \in (0,1]$ there exists $\varepsilon_\gamma > 0$, such that the bound
		\begin{equation}
			\opnorminf{\stab-\stab_0} \le \gamma \frac{\delta}{8C}
		\end{equation}
		holds for all $\zeta\in\mathcal{D}_\kappa^-$ with $|\zeta - \bar z| \le \varepsilon_\gamma$. Here $\delta$ is defined in \eqref{F separation} and $C>0$ is the constant in \eqref{A0 resolvent bound}.
		We choose $\varepsilon_\gamma$ to be smaller than $\varepsilon_1$ of Claim \ref{A resolvent bound}, then for all $\zeta$ with $\re\zeta\in\bulk_\kappa$, $\im\zeta<0$ such that $|\zeta - \bar z| \le \varepsilon_\gamma$ we have
		\begin{equation} \label{Pi-Pi_0}
			\begin{split}
				\opnorminf{\Pi-\Pi_0} &\le \frac{r}{2\pi}\int\limits_0^{2\pi} \opnorminf{(r e^{\I\theta} -\stab)^{-1} - (r e^{\I\theta} - \stab_0)^{-1}}\mathrm{d}\theta\\
				&\le \frac{r}{2\pi}\int\limits_0^{2\pi} \opnorminf{(r e^{\I\theta} -\stab)^{-1}(\stab-\stab_0)(r e^{\I\theta} - \stab_0)^{-1}}\mathrm{d}\theta\\
				&\le r\cdot \frac{8C}{\delta} \cdot\gamma\frac{\delta}{8C} \cdot \frac{4C}{\delta} = \gamma \frac{4Cr}{\delta}.
			\end{split}
		\end{equation}
		Here we used inequalities \eqref{A0 resolvent bound} and \eqref{A resolvent bound 8C delta} in the second to last step. 
		We set the value of $\gamma$ to be $\gamma_0 := \min\left\{1, \frac{c_0\delta}{8Cr} \right\}$, which guarantees that 
		\begin{equation}
			\norm{\Pi\vect{1}}_\infty \ge \bigl| \norm{\Pi_0\vect{1}}_\infty - \opnorminf{\Pi-\Pi_0}\norm{\vect{1}}_\infty  \bigr| \ge c_0 - \gamma_0 \frac{4Cr}{\delta} \ge \frac{c_0}{2}.
		\end{equation}
		Finally, observe that 
		\begin{equation} \label{norm_Pi}
			\opnorminf{\Pi} \le \opnorminf{\Pi_0} + \opnorminf{\Pi-\Pi_0} \le C_0 + c_0/2.
		\end{equation}
		This proves the claim with $c := c_0 / (2C_0 + c_0)$.
	\end{proof}
	
\subsection{Finishing the Proof of Theorem \ref{th_T_local_law}}
\begin{proof}[Proof of Theorem \ref{th_T_local_law}]
	 Recall that the objective is to estimate the quantities defined in \eqref{twoGdef}. %assuming that either $\alpha \neq \beta$ or $\alpha = \beta = y$. 
	Instead of estimating $\sum_{a\neq y} w_aG_{\alpha a} \other{G}_{a\beta}$ directly, it is more convenient to work with objects of the type $\sum_{a\neq y} W_{ax} G_{\alpha a} \other{G}_{a\beta}$, since they generalize quantities appearing in both \eqref{twoG_law_1} and \eqref{twoG_law_2}. The redundant index $x$ can be eliminated by setting $W_{ax} := w_a$. 
	
	In the case $\im z \im\zeta > 0$, \eqref{twoG_law_1} and \eqref{twoG_law_2} follow immediately from \eqref{norm in same half plane} and Lemma \ref{self_eq_lemma} (see Remark \ref{same_half_plane_remark}). Therefore, we focus on the case $\im z\im\zeta < 0$.

	Since $\Pi$ has rank one and Claim \ref{Pi 1 > c} asserts that $\Pi\vect{1} \neq 0$, the kernel of $\Pi$ together with $\vect{1}$ span $\mathbb{C}^N$. Therefore we can decompose each column of the  matrix $W$  into a linear combination of $\vect{1}$ and an element of $\ker\Pi$, that is, there exists an $N\times N$ matrix $Y$ and a vector $\vect{s}\in\mathbb{C}^N$ such that
	\begin{equation} \label{decomp g}
		W =  \Y  + \vect{1}\s^*,  \quad \Pi\Y = 0.
	\end{equation}
	We multiply the first equality in \eqref{decomp g} by $\Pi$ from the left, apply both sides to the $a$-th standard basis vector $\vect{e}_a$ of $\mathbb{C}^N$ and take the $\ell^\infty$-norm to deduce
	\begin{equation} \label{abs_s_a}
		 \norm{\Pi W\vect{e}_a}_\infty  = |s_a|\norm{\Pi\vect{1}}_\infty,\quad a\in\{1,\dots,N\}.
	\end{equation}
	 By assumption, $\norm{W}_{\max} \lesssim N^{-1}$, hence  $\norm{W\vect{e}_a}_\infty \lesssim N^{-1}$.  Using Claim \ref{Pi 1 > c} we get  
	\begin{equation} \label{s^a bound}
			|s_a| \lesssim \frac{N^{-1}\opnorminf{\Pi}}{\norm{\Pi\vect{1}}_{\infty}} \lesssim N^{-1}, \quad a\in \{1,\dots, N\}.
	\end{equation} 
	We combine \eqref{decomp g} and the resolvent identity in the form $(z-\zeta)G\other{G} = G-\other{G}$ to obtain
	\begin{equation} \label{T expr2}
		 \sum_{a\neq y} W_{ax} G_{\alpha a} \other{G}_{a\beta} = \sum_{a\neq y} Y_{ax} G_{\alpha a} \other{G}_{a\beta}  + g_{\alpha\beta}^y \bar{s}_x,\quad %	\g:=\left(g_a\right)_{a=1}^N 
		\quad g_{\alpha\beta}^y := \frac{G_{\alpha\beta} - \other G_{\alpha\beta}}{z - \zeta} - G_{\alpha y}\other G_{y\beta}.
	\end{equation}
	Define the $N\times N$ matrix $\X:= \left(1 -S\m \other\m \right)^{-1}\Y$.
	It follows from \eqref{decomp g} that $\Y = (1-\Pi)\Y$, hence $X = (1-S\m\other\m)^{-1}(1-\Pi)\Y$. Furthermore,   estimates $\norm{W}_{\max} \lesssim N^{-1}$,  \eqref{decomp g}, and \eqref{s^a bound} imply that $|Y_{ab}|\lesssim N^{-1}$ for all $a$ and $b$. Since by Claim \ref{(1-Smm)(1-Pi)} $\opnorminf{(1-S\m\other\m)^{-1}(1-\Pi)} \lesssim 1$, we conclude that 
	\begin{equation} \label{X_aj_bound}
		\norm{X}_{\max}=\max\limits_{a,b}|\X_{ab}| \lesssim N^{-1}.
	\end{equation}
	First, using \eqref{X_aj_bound}, we can apply Lemma \ref{self_eq_lemma} to the first term in \eqref{T expr2} to obtain
	\begin{equation} \label{self consistent part}
		 \sum_{a\neq y} Y_{ax} G_{\alpha a} \other{G}_{a\beta} = %\sum_a [\bigl( 1 - S\m\other\m \bigr)\X]_{ax} G_{\alpha a} G_{a\beta}    = 
		\delta_{\alpha \beta} m_\alpha \other{m}_\alpha ([(1-S\m\other{\m})^{-1}\Y]_{\alpha x} -\delta_{\alpha y}\Y_{\alpha x})  + \Oprec \bigl(\Psi^{2}\other \Psi + \Psi\other\Psi^{2}\bigr).
	\end{equation}
	Using \eqref{decomp g}, we proceed by computing 
	\begin{equation} \label{xy_element_compute}
		\begin{split}
			m_\alpha \other{m}_\alpha[(1-S\m\other{\m})^{-1}\Y]_{\alpha x} =& \bigl[\m\other\m \bigl(1- S\m\other\m \bigr)^{-1}\left( W - \vect{1}\s^*\right)\bigr]_{\alpha x}
			%\\=&\Tr\left[\frac{\m'}{\m} \left(S\m\other\m \right)^2\bigl(1- S\m\other\m\bigr)^{-1}\right]
			%-\Tr\left[\vect{1}\s^*\m\other\m\left(\bigl(1- S\m\other\m\bigr)^{-1}-1\right)\right]
			\\=& \bigl[\m\other\m \bigl(1- S\m\other\m \bigr)^{-1}W \bigr]_{\alpha x} 
			-\bigl(\m\other{\m}\bigl(1- S\m\other\m \bigr)^{-1}\vect{1}\bigr)_\alpha\bar{s}_x.
		\end{split}
	\end{equation}
	 Finally, it follows from subtracting the vector Dyson equations \eqref{VD_eq} for $z$ and $\zeta$ that  
	\begin{equation} \label{m_tilde_m_S_identity}
		\m\other{\m}\bigl(1- S\m\other\m \bigr)^{-1}\vect{1} = \frac{\m-\other{\m}}{z-\zeta}.
	\end{equation}
	%We estimate the first term using Claim \ref{self_eq_lemma} and the second term using the local law.\\
	Next, we estimate the second term in \eqref{T expr2}. Applying the local law in the form \eqref{law1}, we obtain
	\begin{equation} \label{local law part}
		 g_{\alpha\beta}^y  = \delta_{\alpha\beta}\frac{m_\alpha -\other {m}_\alpha}{z-\zeta} - \delta_{\alpha\beta}\delta_{\alpha y}m_\alpha \other {m}_\alpha + \Oprec\bigl((|\eta|+|\other{\eta}|)^{-1}(\Psi+\other\Psi)\bigr), 
	\end{equation}
	where we used that $|z-\zeta| \ge |\eta|+|\other{\eta}|$, since $\eta\other{\eta} <0$.
	Combining \eqref{s^a bound}, \eqref{T expr2}, and \eqref{self consistent part}-\eqref{local law part} yields 
	\begin{equation} 
		\begin{split}
			\sum_{a \neq y} W_{ax} G_{\alpha a} \other{G}_{a\beta} =&\, \delta_{\alpha \beta}\bigl[\m\other\m \bigl(1- S\m\other\m \bigr)^{-1}W \bigr]_{\alpha x} - \delta_{\alpha \beta}\delta_{\alpha y}[\m\other{\m}W]_{\alpha x}\\
			&+ \Oprec \bigl((\Psi+\other\Psi) (\Psi\other \Psi + \min\{\Theta,\other{\Theta}\})\bigr), 
		\end{split}
	\end{equation}
	 which proves \eqref{twoG_law_1} by setting $W_{ax} := w_a$.
	
	To prove \eqref{twoG_law_2}, we observe that  by setting $x=y=\alpha=\beta = b$ in \eqref{T expr2} and summing over $b$ yields
	\begin{equation}
		\sum_{b}\sum_{a\neq b} W_{ab} G_{b a} \other{G}_{ab} = \sum_{b}\sum_{a\neq b} Y_{ab} G_{aa} \other{G}_{ab} + \langle\s, \g\rangle,\quad g_b := \frac{G_{bb} - \other G_{bb}}{z - \zeta} - G_{bb}\other G_{bb}, \,\, b \in \{1,\dots,N\}.
	\end{equation}
	
	To estimate $\langle\s,\g\rangle$, we use \eqref{s^a bound} and the averaged local law \eqref{law2} to obtain
	\begin{equation} \label{trace law part}
		\bigl\langle \s,\g\bigr\rangle = \biggl\langle \s,\frac{\m -\other \m}{z-\zeta} - \m \other \m\biggr\rangle + \Oprec\bigl((|\eta|+|\other{\eta}|)^{-1}(\Theta+\other{\Theta})\bigr),
	\end{equation}
	where we used that $|z-\zeta| \ge |\eta|+|\other{\eta}|$, since $\eta\other{\eta} <0$.
	
	Setting $x=y=\alpha=\beta = b$ in \eqref{self consistent part}, summing over $b$,  using the identities \eqref{xy_element_compute} and \eqref{m_tilde_m_S_identity}, and combining the result with \eqref{trace law part}, we deduce that 
	\begin{equation}
		\begin{split}
			\sum_{b}\sum_{a\neq b} W_{ab} G_{b a} \other{G}_{ab} =& \Tr\bigl[\m\other\m S\m\other\m\bigl(1- S\m\other\m \bigr)^{-1}W\bigr] + N\Oprec\bigl(\Psi\other{\Psi}(\Psi+\other{\Psi})+\Theta\other{\Theta}\bigr),
		\end{split}
	\end{equation} 
	where we used that $(|\eta|+|\other{\eta}|)^{-1}(\Theta+\other{\Theta}) = N\Theta\other{\Theta}$. This establishes \eqref{twoG_law_2} and concludes the proof of Theorem \ref{th_T_local_law}.
\end{proof}
	\begin{remark} \label{optimal_T_rem} 
		We outline the steps needed to achieve the optimal error estimate \eqref{optimal_avgT_error}. %TODO:: Laszlo's punkt 14) 
		First, \new one needs to adapt the proof of Theorem \ref{th_T_local_law}. More specifically, replace the decomposition \eqref{decomp g} \old with
		\begin{equation} \label{optimal_decomp}
			W = Y + \vect{1}\s^* + \vect{q}\vect{1}^*, \text{ such that  }\, \Pi(z,\zeta)Y = Y\Pi^t(\zeta,z) = 0,
		\end{equation} 
		where $\Pi(z,\zeta)$ is the destabilizing eigenprojector defined in \eqref{Pi}. The terms involving $\s$ and $\vect{q}$ are handled using  the averaged  local law \eqref{law2}, similarly to \eqref{trace law part}. 
		
		For the remaining term, $\mathcal{R} := \sum_y \Ff{yy}{yy}$, we \new adapt the mechanism of Lemma \ref{self_eq_lemma} by using the following iterative scheme. In the first step, we apply an expansion similar to \eqref{general_cumul_expan} to the partial derivative $\partial_{jk}\mathcal{R}$. This improves the error in the estimate on $\mathcal{R}$ by a factor of $(\Psi + \other{\Psi})^{1/2}$. If we expand $\partial_{lp}\partial_{jk}\mathcal{R}$ in a similar manner, we gain another $(\Psi + \other{\Psi})^{1/4}$.  Iterating this approach we can estimate $\mathcal{R}$ with an error stochastically dominated by $N\Psi\other{\Psi}(\Psi + \other{\Psi})^{2-2^{-d}}$ for any given integer $d$ (where $d$ is the maximal order of expanded partial derivatives). By Definition \ref{stochdom_def}, this \old is sufficient to establish \eqref{optimal_avgT_error}. Similar arguments in the context of random band matrices can be found in \cite{Erdos2013Fluctuations}.		
	\end{remark}  
\begin{proof}[Proof of Corollary \ref{cor_T_local_law}]
	 Estimate \eqref{T_xy_law} on $T_{xy}(\zeta,z)$ follows from \eqref{twoG_law_1} by setting $\alpha = \beta = y$ and $w_a := S_{xa}$. Estimate \eqref{T_local_law} on $\Tr[AT(z,\zeta)]$ follows from \eqref{twoG_law_2} by setting $W := SA^t$, which satisfies $|W_{ab}| \lesssim  N^{-1}\opnorminf{A}$. This concludes the proof of Corollary \ref{cor_T_local_law}. 
\end{proof}	

\begin{remark}
	Note that estimates \eqref{twoG_law_1} and \eqref{twoG_law_2} (also with the improved error term \eqref{optimal_avgT_error}) hold without omission of indices in the $a$ summation. Indeed, it follows from Theorems \ref{th_T_local_law} and \ref{local_law} that 	 
	\begin{equation}
		\begin{split}
			\sum_{a} w_{a} G_{\alpha a} \other{G}_{a\beta}
			=&\,\delta_{\alpha \beta}\bigl[\m\other{\m}\bigl(1- S\m\other{\m} \bigr)^{-1}\vect{w} \bigr]_{\alpha} + \Oprec \bigl((\Psi+\other{\Psi}) (\Psi\other{\Psi} + \mathds{1}_{\{\eta\other{\eta}<0\}}\min\{\Theta,\other{\Theta}\})\bigr),\\
			\sum_{a,b} W_{ab}G_{ba}\other{G}_{ab} =& \Tr\bigl[\m\other{\m}\bigl(1- S\m\other{\m} \bigr)^{-1}W \bigr]+ \Oprec \bigl(N(\Psi+\other{\Psi})\Psi\other{\Psi}+\mathds{1}_{\{\eta\other{\eta} <0\}}N\Theta\other{\Theta}\bigr).
		\end{split}
	\end{equation}
\end{remark} 
\section{Proof of Proposition \ref{main2}} \label{proof_of_main2}
In this section, we compute the variance $V(f)$ defined in \eqref{variance_V} for mesoscopic $C^2_c$ test functions $f$. 
In \cite{Landon2021Wignertype}, the limiting variance was computed for several types of $C^\infty$ test functions, including compactly supported ones; however, $V(f)$ is computed with an $\mathcal{O}(1)$ error (see, e.g., Lemma 6.7 in \cite{Landon2021Wignertype}), which is not negligible in the setting of the present paper. To obtain effective error bounds, we augment the  proof laid out in \cite{Landon2021Wignertype} by performing further integration by parts in the integral representation of $V(f)$, thus eliminating the $f'$ terms, improving the error by a factor of $\mathcal{O}(\eta_0)$. %To complete the calculation, we analyze the behavior of $\mathcal{K}(z,\zeta)$ near the real line using the perturbative estimates obtained in \cite{Landon2021Wignertype}. 

Throughout this section, we adhere to the notation $\m \equiv \m(z), \other\m \equiv \m(\zeta), \eta := \im z, \other{\eta} := \im\zeta$. 

The stability operator $(1- S\m\other{\m})$ can be expressed in terms of the self-saturated energy operator $F$, defined in \eqref{F_def}, via the following identity
\begin{equation} \label{Smm F identity}
	1-S\m\other\m = |\m\other\m|^{-1/2}\left(\mathrm{U}^* - F(z,\zeta)\right)|\m\other\m|^{1/2}\mathrm{U}, \quad \mathrm{U}:= \frac{\m\other\m}{|\m\other\m|}.
\end{equation}
 Furthermore, by \eqref{gapF}, the operator $F$ can be decomposed such that
\begin{equation} \label{F=vv+A}
	F(z,\zeta) = \eigF_1(z,\zeta)\, \vv(z,\zeta)\bigl(\vv(z,\zeta)\bigr)^* + A(z,\zeta), \quad A(z,\zeta)\vv(z,\zeta) = 0, \quad \opnormtwo{A(z,\zeta)} \le 1 - \other{\delta},
\end{equation}
where $\eigF_1,\vv$ is the principal eigenvalue-eigenvector pair of $F$, and $\other{\delta}$ is the constant in \eqref{gapF}. 

Let $R \equiv R(z,\zeta)$ denote $(\mathrm{U}^*(z,\zeta)-A(z,\zeta))^{-1}$. In the sequel, we drop the arguments and write $A\equiv A(z,\zeta)$. Lower bound \eqref{v_bound} and the inequality in \eqref{F=vv+A} imply that 
\begin{equation} \label{norm_R}
	\opnormtwo{R} + \opnorminf{R} \lesssim 1.
\end{equation}
In the following lemma, we collect the perturbative estimates on the saturated self-energy operator $F$ and related quantities established in \cite{Landon2021Wignertype}.
\begin{lemma} (Proposition 6.5, (6.52), (6.60), (6.71), and (6.67) in \cite{Landon2021Wignertype}) Let $w,\zeta_1,\zeta_2$ be spectral parameters in $\bulk_\kappa + \I[-1,1]$, and let $F$ be the operator defined in \eqref{F_def}, then the principal eigenvalue-eigenvector pair $\eigF_1,\vv$ of $F$ satisfies 
	\begin{equation} \label{v,psi_pert} 
		\opnormtwo{\vv(w,\zeta_1) - \vv(w,\zeta_2)} + |\eigF_1(w,\zeta_1) - \eigF_1(w,\zeta_2)| \lesssim |\zeta_1-\zeta_2|.
	\end{equation}	
	Furthermore, for operator $A$ defined in \eqref{F=vv+A}, we have the estimate
	\begin{equation} \label{F_pert} 
		\opnormtwo{F(w,\zeta_1) - F(w,\zeta_2)} + \opnormtwo{A(w,\zeta_1) - A(w,\zeta_2)} \lesssim |\zeta_1-\zeta_2|.
	\end{equation}
	Let $z := x+\I\eta, \zeta := y - \I\eta$, with $x,y \in \bulk_\kappa$, $0 \le \eta \le 1$, then
	\begin{equation} \label{<vRm'mURv>} 
		\eigF_1\bigl\langle \vv,R\frac{\m'}{\m}\mathrm{U}^* R\vv \bigr\rangle = \eigF_1(z,z)\bigl\langle \vv(z,z)\frac{\m'}{\m}\vv(z,z)\bigr\rangle + \mathcal{O}(|x-y|)
	\end{equation}
	Let $\omega \equiv \omega(z,\zeta) := 1-\eigF_1\langle \vv,R\vv\rangle$, then
	\begin{equation} \label{omega_asympt} 
		\omega(z,\zeta) = 1-\eigF_1(z,z) + \eigF_1(z,z)(x-y)\overline{\bigl\langle \vv(z,z)\frac{\m'}{\m}\vv(z,z)\bigr\rangle} + \mathcal{O}(|x-y|^2),
	\end{equation}
	Moreover, there exists $\varepsilon>0$ independent of $N$, such that for all $x,y \in \bulk_\kappa$ satisfying $|x-y| \le \varepsilon$, 
	\begin{equation} \label{omega_lower_bound}
		|\omega(z,\zeta)| \gtrsim \eta + |x-y|.
	\end{equation} 
	Finally, for $z:= x + \I\eta$ with $x\in\bulk_\kappa$, the following identity holds
	\begin{equation} \label{<>formula}
		\lim\limits_{\eta \to +0} \bigl\langle \vv(z,z)\frac{\m'}{\m}\vv(z,z)\bigr\rangle = \frac{\I\pi}{2}\rho(x)\norm{\frac{\im\m(x+\I0)}{|\m(x)|}}_2^{-2}
	\end{equation} 
\end{lemma}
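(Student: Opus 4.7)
My plan is to derive all six estimates from three ingredients: (i) the Lipschitz regularity of $\m(\zeta)$ provided by \eqref{dm_bound}, (ii) the uniform spectral gap \eqref{gapF} of $F$, and (iii) analytic perturbation theory around the symmetric point $\zeta = \bar z$, where $F(z,\bar z)$ is Hermitian and $\mathrm{U}(z,\bar z) = 1$.

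First I would prove \eqref{F_pert}. Differentiating the entrywise definition \eqref{F_def} and using the uniform bounds $|m_j|\sim 1$ from \eqref{m_bound} and $|m_j'|\lesssim 1$ from \eqref{dm_bound}, together with condition \eqref{cond_A}, the Schur test gives $\opnormtwo{F(w,\zeta_1) - F(w,\zeta_2)}\lesssim |\zeta_1 - \zeta_2|$. To transfer this to \eqref{v,psi_pert}, I use the contour-integral representation of the top eigenprojector, exactly as in Section \ref{Stab_subsection}: the spectral gap \eqref{gapF} controls $(\xi - F)^{-1}$ uniformly on a fixed contour enclosing $\psi_1$, and the resolvent identity converts the $\ell^2$-Lipschitz bound on $F$ into $\ell^2$-Lipschitz bounds on $\psi_1$ and $\vv\vv^*$; normalization gives it on $\vv$. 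The $A$-part of \eqref{F_pert} then follows from $A = F - \psi_1\vv\vv^*$ and the triangle inequality.

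Next I would handle \eqref{<vRm'mURv>}. At $\zeta = \bar z$, $\mathrm{U}(z,\bar z) = 1$ and $A(z,\bar z)$ is self-adjoint with $A(z,\bar z)\vv(z,z) = 0$, so $(\mathrm{U}^* - A)\vv = \vv$, i.e.\ $R(z,\bar z)\vv(z,z) = \vv(z,z)$. Combined with the Lipschitz bounds from \eqref{v,psi_pert}, \eqref{F_pert}, and with $\opnormtwo{\mathrm{U}(z,\zeta)-1}\lesssim |\zeta - \bar z|$ obtained from \eqref{dm_bound} and $|\m|\sim 1$, each ingredient on the left of \eqref{<vRm'mURv>} coincides with its value at $\zeta = \bar z$ up to $\mathcal{O}(|x - y|)$.

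The main obstacle is \eqref{omega_asympt}: I would Taylor expand $\omega(z,\zeta) = 1 - \psi_1(z,\zeta)\langle \vv(z,\zeta), R(z,\zeta)\vv(z,\zeta)\rangle$ to first order in $\zeta - \bar z$. Using Hellmann--Feynman, $\partial_\zeta \psi_1|_{\zeta = \bar z} = \langle \vv, \partial_\zeta F\vv\rangle$, and a direct computation from \eqref{F_def} identifies this inner product with a multiple of $\langle \vv(z,z)(\m'/\m)\vv(z,z)\rangle$. Crucially, the cross terms containing $\partial_\zeta \vv$ and $\partial_\zeta R$ vanish at $\zeta=\bar z$: the former because $R(z,\bar z)\vv = \vv$ and $\langle \vv,\partial_\zeta \vv\rangle = 0$ (from $\|\vv\|_2 = 1$), the latter via Neumann expansion of $R$ combined with the same orthogonality. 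The coefficient of $(x - y)$ then collapses to $\psi_1(z,z)\overline{\langle \vv(\m'/\m)\vv\rangle}$ after recognizing that the derivative of $\mathrm{U}^*$ at the symmetric point produces the conjugate factor. The quadratic remainder follows from Lipschitz bounds on the second derivatives, obtained by a further differentiation of the contour integrals.

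The lower bound \eqref{omega_lower_bound} I would deduce by combining \eqref{omega_asympt} with $1 - \psi_1(z,z) \sim \eta$, which follows from \eqref{F_norm} and \eqref{gapF}, and with the non-vanishing imaginary part of $\langle \vv(z,z)(\m'/\m)\vv(z,z)\rangle$ guaranteed by \eqref{<>formula}; since the leading pieces in $\omega$ contribute in orthogonal directions in $\mathbb{C}$, choosing $\varepsilon$ small rules out cancellation. Finally, for \eqref{<>formula} I differentiate \eqref{VD_eq} to obtain $\m' = (1 - \m^2 S)^{-1}\m^2$, so $\m'/\m = (1-\m^2 S)^{-1}\m$, and take the imaginary part of \eqref{VD_eq} itself: $(1 - |\m|^2 S)(\im\m/|\m|^2) = \eta/|\m|^2$, whose $\eta\to 0^+$ limit identifies $\im\m/|\m|$ with the Perron eigenvector of $F(x,x) = |\m|S|\m|$, normalized via $\rho(x) = \pi^{-1}\langle\im\m\rangle$. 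Substituting into $\langle \vv(\m'/\m)\vv\rangle$ and simplifying yields the claimed formula, the $i\pi/2$ factor arising from the imaginary part of $\m'$ at a real argument and the normalization of $\rho$.
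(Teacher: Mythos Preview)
The paper does not actually prove this lemma; it is stated as a compilation of results imported verbatim from \cite{Landon2021Wignertype} (Proposition 6.5 and equations (6.52), (6.60), (6.67), (6.71) there), so there is no proof in the present paper to compare against. Your proposal is an independent sketch of how to establish these facts, and the overall strategy --- Lipschitz control of $F$ via \eqref{dm_bound} and the Schur test, contour-integral perturbation theory using the gap \eqref{gapF}, and a first-order expansion at the symmetric point $\zeta=\bar z$ --- is the natural one and is essentially what is carried out in \cite{Landon2021Wignertype}.

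There is, however, a genuine error in your derivation of \eqref{omega_asympt}. You assert that the cross term containing $\partial_\zeta R$ vanishes at $\zeta=\bar z$ ``via Neumann expansion of $R$ combined with the same orthogonality.'' It does not. Writing $R=(\mathrm{U}^*-A)^{-1}$ and differentiating, at the symmetric point one finds $\langle \vv,(\partial_y R)\vv\rangle = -\langle \vv,(\partial_y\mathrm{U}^*)\vv\rangle + \langle \vv,(\partial_y A)\vv\rangle$. The $A$-piece indeed vanishes (Hellmann--Feynman cancels it), but the $\mathrm{U}^*$-piece equals $-i\,\mathrm{Im}\langle \vv(z,z),(\m'/\m)\vv(z,z)\rangle$, which is nonzero. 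This imaginary contribution is precisely what combines with $\partial_y\psi_1|_{y=x}=\psi_1(z,z)\,\mathrm{Re}\langle \vv,(\m'/\m)\vv\rangle$ to assemble the complex conjugate $\psi_1(z,z)\overline{\langle \vv,(\m'/\m)\vv\rangle}$ in \eqref{omega_asympt}. Your sentence ``after recognizing that the derivative of $\mathrm{U}^*$ at the symmetric point produces the conjugate factor'' suggests you sensed this, but it is inconsistent with the preceding claim that the $\partial_\zeta R$ term vanishes: $\mathrm{U}^*$ enters $\omega$ only through $R$. A smaller issue: for the $\partial_\zeta\vv$ cross terms, the relation $\|\vv\|_2=1$ alone gives only $\mathrm{Re}\langle\vv,\partial_y\vv\rangle=0$; you need in addition that $\vv$ has real entries (Perron--Frobenius) and that $R(z,\bar z)$ is self-adjoint to conclude $\langle \vv,R\partial_y\vv\rangle=\langle\vv,\partial_y\vv\rangle=0$.
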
 
By our choice of $\kappa$, $E_0$ is in the interior of the bulk interval $\bulk_\kappa$,  defined in \eqref{D_bulk} , hence if we define $\eps:= \min\{\varepsilon/4, \dist(E_0, \mathbb{R}\backslash\bulk_\kappa)\}$, then $\eps \sim 1$. Furthermore, since the function $g$ is compactly supported, we assume that $\supp{f} \subset [E_0-\eps,E_0+\eps]$ for large $N$.	
\begin{lemma} \label{V_good_formula} Let $\eta_* \equiv \eta_*(N)$ satisfy $0<\eta_* \le N^{-100}$, then  $V(f)$, defined in \eqref{variance_V}, admits the estimate 
	\begin{equation} \label{V_tilde_K}
		V(f) 	=\frac{1}{4\pi^2}\iint\limits_{[E_0-\eps,E_0+\eps]^2}(f(y)-f(x))^2\other{\mathcal{K}}(x+\I\eta_*,y-\I\eta_*) \mathrm{d}x\mathrm{d}y +\bigO{\eta_0+N^{-\twoalp}},
	\end{equation}
	where
	\begin{equation} \label{tilde_K}
		\other{\mathcal{K}}(z,\zeta):= -2\re\Tr\left[\frac{\m'}{\m}(1-S\m\other\m)^{-1}S\m\other\m'(1-S\m\other\m)^{-1}\right].
	\end{equation}
\end{lemma}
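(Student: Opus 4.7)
The plan is to convert the four-dimensional integral defining $V(f)$ into a two-dimensional integral on horizontal contours just above and below the real axis, using Stokes' theorem and the holomorphy of the kernel in each half-plane, then to symmetrize in $x,y$ to obtain the $(f(y)-f(x))^2$ form. I decompose $\mathcal{K}$ from \eqref{kernel_K} into its three constituent pieces and treat them separately.

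First, for the subdominant pieces (the $\bigl(1-\tfrac{2}{\beta}\bigr)\Tr[S\m'(z)\m'(\zeta)]$ term and the fourth-cumulant term), I perform a Helffer--Sj\"ostrand-type reduction in each variable separately. Both pieces factor (or essentially factor) into products of functions of $z$ alone and $\zeta$ alone, so the 4D integral decouples into products of 1D integrals of the form $\int f'(x)\rho_j(x)\,\mathrm{d}x$ against smooth densities. For the scaled $f$ of \eqref{scaled_f}, the change of variables $x = E_0+\eta_0 u$ shows that each such integral is $\bigO{\eta_0}$ (it would vanish identically if $\rho_j$ were constant on $\supp{f}$), so the subdominant pieces contribute $\bigO{\eta_0^2}$ to $V(f)$ in total.

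For the main piece $\tfrac{2}{\beta}\partial_\zeta H(z,\zeta)$ with $H := \Tr\bigl[\tfrac{\m'(z)}{\m(z)}(1-S\m\other\m)^{-1}\bigr]$, the $\partial_\zeta$ derivative dovetails naturally with a Stokes reduction, since $\partial_\zeta H$ is holomorphic in both $z$ and $\zeta$ in each half-plane. Applying Stokes in $\zeta$ and then in $z$ converts the area integral over $\dom\times\dom'$ into a sum of four line integrals indexed by $(\sigma_z,\sigma_\zeta)\in\{+,-\}^2$, with $(\im z,\im\zeta) = (\sigma_z,\sigma_\zeta)\eta_*$. Boundary contributions at $|\im z|,|\im\zeta|\approx 1$ vanish because $\chi$ vanishes there; lateral boundaries vanish by compact support of $f$. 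For $\sigma_z=\sigma_\zeta$, $(1-S\m\other\m)^{-1}$ extends analytically to a full half-plane by \eqref{norm in same half plane}, so deforming the contour to infinity eliminates these sectors. The remaining sectors $(+,-)$ and $(-,+)$ are complex conjugates of each other and together produce the factor $-2\re$ appearing in \eqref{tilde_K}.

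On the surviving contours, $\tilde f(x\pm\I\eta_*) = f(x)\pm\I\eta_* f'(x)$ since $\chi(\eta_*)=1$, and the $\I\eta_* f'$ corrections contribute at most $\bigO{\eta_*\eta_0^{-1}} = \bigO{N^{-99}}$ after one further integration by parts on the contour. I am thus left with $\tfrac{1}{2\pi^2}\iint f(x)f(y)\other{\mathcal{K}}(x+\I\eta_*,y-\I\eta_*)\,\mathrm{d}x\,\mathrm{d}y$ on $[E_0-\eps,E_0+\eps]^2$. The symmetrization $2f(x)f(y) = f(x)^2+f(y)^2-(f(x)-f(y))^2$ combined with the identity \eqref{m_tilde_m_S_identity}, which implies after contour deformation that $\int\other{\mathcal{K}}(x+\I\eta_*,y-\I\eta_*)\,\mathrm{d}y = \bigO{N^{-\twoalp}}$ on $\supp{f}$, absorbs the $f(x)^2$ and $f(y)^2$ contributions into the error and yields the claimed form. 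The main obstacle is the near-diagonal behavior of $\other{\mathcal{K}}$: by \eqref{omega_lower_bound}, the destabilizing eigenvalue of $(1-S\m\other\m)$ has size $\bigO{\eta_*+|\re z-\re\zeta|}$, producing an inverse-square singularity in $\other{\mathcal{K}}$ at $x=y$ that is tamed only by the $(f(y)-f(x))^2$ weight but not by $f(x)^2$ alone, making the marginal cancellations via \eqref{m_tilde_m_S_identity} essential rather than merely cosmetic.
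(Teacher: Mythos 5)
Your overall architecture (reduce the four-dimensional integral to horizontal contours at height $\pm\eta_*$, then symmetrize $2f(x)f(y)=f(x)^2+f(y)^2-(f(x)-f(y))^2$) is workable in principle, and your treatment of the two subdominant pieces of $\mathcal{K}$ is fine. But three steps in the main-term analysis are either missing or wrong. First, the integral \eqref{variance_V} is taken over $\dom\times\dom'$, whose boundary sits at $|\im z|\sim N^{-\alp}\eta_0$, not at $\eta_*\le N^{-100}$; before any Stokes reduction lands you on the lines $\im z=\pm\eta_*$ you must show that the intervening strip $\eta_*<|\im z|<N^{-\alp}\eta_0$ contributes negligibly. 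This is exactly where the kernel is most singular, and the estimate requires the cancellation $\partial_{\bar z}\other f=\tfrac{\I\im z}{2}f''(\re z)$ (cf.\ \eqref{V-V_*}); you do not address it. Second, the same-half-plane sectors cannot be ``eliminated by deforming the contour to infinity'': the boundary integrand contains $\other f(x\pm\I\eta_*)$, and the quasi-analytic extension is not holomorphic, so no further deformation is available. These sectors do not vanish --- they are merely negligible, $\bigO{\eta_0^2}$, and showing this requires the nontrivial $\bigO{1}$ bound on the same-half-plane kernel from Claim \ref{K_L_claim}, which you never invoke.

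The third and most serious gap is the marginal cancellation, which in your route carries the entire weight of the argument (as you yourself note, the $f(x)^2$ and $f(y)^2$ terms see the full $\omega^{-2}$ singularity). You assert $\int\other{\mathcal{K}}(x+\I\eta_*,y-\I\eta_*)\,\mathrm{d}y=\bigO{N^{-\twoalp}}$ ``by \eqref{m_tilde_m_S_identity} after contour deformation,'' but \eqref{m_tilde_m_S_identity} concerns $(1-S\m\other\m)^{-1}\vect{1}$ and does not produce this bound. The correct mechanism is that $\other{\mathcal{K}}=-2\re\,\partial_\zeta\Tr[\tfrac{\m'}{\m}(1-S\m\other\m)^{-1}]$, so the $y$-integral evaluates the antiderivative at the endpoints $E_0\pm\eps$; one must then show that the $\bigO{N}$ part of this trace is $\zeta$-independent and that the remainder is $\bigO{1}$ when $|\re z-\re\zeta|\sim 1$ (this is the content of \eqref{boundary_L'_bound} and the second bound in \eqref{K_L'_bounds}). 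The resulting marginal is $\bigO{1}$, not $\bigO{N^{-\twoalp}}$, which still yields an $\bigO{\eta_0}$ contribution after multiplying by $\|f\|_2^2$ --- so the conclusion is salvageable, but as written the step is both misjustified and misquantified. For comparison, the paper sidesteps all three issues by working with the antiderivative $\mathcal{L}=-2\log\det(1-S\m\other\m)+\mathcal{L}_1$: integrating by parts first in $\eta,\other\eta$ produces boundary terms carrying the tiny factor $\partial_{\bar z}\other f|_{\im z=\eta_*}=\bigO{\eta_*}$, and the $(f(y)-f(x))^2$ weight then emerges from a real-variable integration by parts $f'(y)=\partial_y(f(y)-f(x))$ whose boundary terms sit at $E_0\pm\eps$, far from $\supp{f}$, so the singular kernel is never paired with $f(x)f(y)$ at any intermediate stage.
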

In preparation for the proof of Lemma \ref{V_good_formula} we define an auxiliary function $\mathcal{L}(z,\zeta)$  
\begin{equation} \label{kernel L}
	\begin{split}
		\mathcal{L}(z,\zeta) &:= \mathcal{L}_{\log}(z,\zeta) +\mathcal{L}_1(z,\zeta),\\
		\mathcal{L}_{\log}(z,\zeta) &:= -2\log\det\left\{1-S\m\other{\m}\right\},\quad
		\mathcal{L}_1(z,\zeta) := -\Tr\left[S\m\other{\m}\right] + \frac{1}{2}\left\langle\overline{\m\other{\m}}, \Cmlnt^{(4)}\m\other{\m} \right\rangle,
	\end{split}
\end{equation} 
where $\log$ is the principal branch of the complex logarithm, and $\Cmlnt^{(4)}$ is the matrix of the fourth cumulants of $H$. By Jacobi's formula for the derivative of the determinant, it follows from the definitions of $\mathcal{L}$ and $\mathcal{K}$,  that for all $z,\zeta \in \mathbb{C}\backslash\mathbb{R}$
\begin{equation} \label{L''=K}
	\frac{\partial^2}{\partial\zeta\partial z} \mathcal{L}(z,\zeta) = \mathcal{K}(z,\zeta).
\end{equation}
Furthermore, by condition \eqref{cond_A} and the upper bound \eqref{m_bound}, it follows that
\begin{equation} \label{L_bound}
	\begin{split}
		|\mathcal{L}_{\log}(z,\zeta)| \le& \pi+\log\left|\det\left\{1-S\m\other\m\right\}\right|
		\lesssim 1+\Tr\left[\left(1-S\m\other\m\right)^*\left(1-S\m\other\m\right) - I\right] \lesssim 1,
	\end{split}
\end{equation}
where in the last line we used $\left[\left(1-S\m\other\m\right)^*\left(1-S\m\other\m\right) - I\right]_{jj}\lesssim N^{-1}$.

 The partial derivatives of $\mathcal{L}_1$ contribute only sub-leading terms to $\mathcal{L}$. Indeed, we have the estimates
\begin{equation} \label{K_small_part}
	\mathcal{L}_1(z,\zeta) \lesssim 1, \quad \frac{\partial}{\partial z}\mathcal{L}_1(z,\zeta) \lesssim 1, \quad \frac{\partial^2}{\partial\zeta\partial z}\mathcal{L}_1(z,\zeta) \lesssim 1,
\end{equation}
where we used the moment condition \eqref{moment_condition} to bound $S_{jk}$ and $\Cmlnt^{(4)}_{jk}$, \eqref{m_bound} to get the upper bound $\m,\other{\m}\lesssim 1$, and \eqref{dm_bound} to obtain $\m',\other{\m}'\lesssim 1$, since $[E_0+\eps,E_0-\eps] \subset \bulk_\kappa$.

The following claim collects the bounds on $\mathcal{K}$ and $\partial_z\mathcal{L}$ that together with \eqref{L_bound} enable integration by parts in the definition \eqref{variance_V} of the variance $V(f)$, which is the essence of Lemma \ref{V_good_formula}.
\begin{claim}\label{K_L_claim} (Proposition 6.2 and Proposition 6.6 in \cite{Landon2021Wignertype}) Let $\mathcal{K}(z,\zeta)$ and $\mathcal{L}(z,\zeta)$ be as defined in \eqref{kernel_K} (with $\beta = 1$) and \eqref{kernel L} respectively, then for all $z,\zeta \in \mathbb{C}\backslash\mathbb{R}$ with $\re z,\re\zeta \in [E_0-\eps,E_0+\eps]$ and $|\im z|, |\im\zeta| \le 1$ we have 
	\begin{equation} \label{K_L'_bounds}
		\mathcal{K}(z,\zeta) \lesssim 1 + \mathds{1}_{\{\eta\other{\eta} <0\}}(|\eta| + |\other{\eta}|)^{-2}, \quad \frac{\partial}{\partial z}\mathcal{L}(z,\zeta) \lesssim 1 + (|\re z - \re\zeta|+|\eta| + |\other{\eta}|)^{-1},
	\end{equation}
	where	 $\eta:=\im z, \other{\eta}:= \im\zeta$.
\end{claim}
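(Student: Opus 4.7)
The plan is to estimate $\mathcal{K}$ and $\partial_z \mathcal{L}$ by isolating the destabilizing direction of the stability operator $1-S\m\other{\m}$, using exactly the decomposition machinery developed in Section \ref{Stab_subsection} and the perturbative identities \eqref{Smm F identity}--\eqref{omega_lower_bound}. The treatment splits cleanly into two regimes: when $\eta\other{\eta}>0$, Proposition \ref{Smm_prop} gives $\opnorminf{(1-S\m\other{\m})^{-1}}\lesssim 1$ and every estimate reduces to a routine use of \eqref{m_bound}, \eqref{dm_bound}, condition \eqref{cond_A}, and the moment bound \eqref{moment_condition}, producing only $\mathcal{O}(1)$ contributions. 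The nontrivial case is $\eta\other{\eta}<0$ with $|\re z-\re\zeta|\le\eps$.

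For $\mathcal{K}(z,\zeta)$ I would handle the three summands of \eqref{kernel_K} separately. The $\Tr[S\m'(z)\m'(\zeta)]$ term and the $\Cmlnt^{(4)}$-term are $\mathcal{O}(1)$ directly by \eqref{cond_A}, \eqref{m_bound}, \eqref{dm_bound}, and the moment bound. The delicate term is $\partial_\zeta\Tr[(\m'/\m)(1-S\m\other{\m})^{-1}]$. Differentiating using Jacobi-type identities gives
\begin{equation*}
\partial_\zeta\Tr\!\left[\frac{\m'}{\m}(1-S\m\other{\m})^{-1}\right]=\Tr\!\left[\frac{\m'}{\m}(1-S\m\other{\m})^{-1}S\m\other{\m}'(1-S\m\other{\m})^{-1}\right].
\end{equation*}
Since $S_{jk}\lesssim N^{-1}$, $|\m|,|\m'|,|\other{\m}'|\lesssim 1$, the standard estimate $|\Tr[XY]|\lesssim \opnorminf{X}\cdot N\cdot N^{-1}\opnorminf{Y}$ paired with Proposition \ref{Smm_prop} yields $\opnorminf{(1-S\m\other{\m})^{-1}}^{2}\lesssim(|\eta|+|\other{\eta}|)^{-2}$, exactly the claimed bound on $\mathcal{K}$.

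For $\partial_z\mathcal{L}(z,\zeta)$, the $\mathcal{L}_1$ piece is $\mathcal{O}(1)$ by \eqref{K_small_part}, so the entire content is in $\partial_z\mathcal{L}_{\log}$. Jacobi's formula gives
\begin{equation*}
\partial_z\mathcal{L}_{\log}(z,\zeta)=2\,\Tr\!\left[(1-S\m\other{\m})^{-1}S\m'\other{\m}\right],
\end{equation*}
and here the naive bound would produce a full $(|\eta|+|\other{\eta}|)^{-1}$; I need to squeeze in the $|\re z-\re\zeta|$ in the denominator. This is the place where I would invoke the decomposition \eqref{Smm F identity}--\eqref{F=vv+A}, writing
\begin{equation*}
(1-S\m\other{\m})^{-1}=\frac{1}{\omega(z,\zeta)}\,|\m\other{\m}|^{-1/2}\mathrm{U}^{-1}\vv\vv^{*}|\m\other{\m}|^{1/2}\,+\,|\m\other{\m}|^{-1/2}\mathrm{U}^{-1}R(1-\vv\vv^{*})|\m\other{\m}|^{1/2},
\end{equation*}
with the residual part bounded in $\opnorminf{\cdot}$ by \eqref{norm_R}. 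The rank one piece carries the destabilizing direction, and the asymptotic \eqref{omega_asympt} combined with the lower bound \eqref{omega_lower_bound} gives $|\omega(z,\zeta)|\gtrsim|\eta|+|\other{\eta}|+|\re z-\re\zeta|$. Substituting the decomposition into the trace and estimating via $S_{jk}\lesssim N^{-1}$, $|\m'|,|\other{\m}|\lesssim 1$ together with \eqref{v_bound} for $\vv$, the rank one piece contributes $\mathcal{O}(|\omega|^{-1})$ and the residual $\mathcal{O}(1)$, yielding the claim.

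The main obstacle is the single factor of $|\re z-\re\zeta|$ in the denominator for $\partial_z\mathcal{L}$: only the projection onto the destabilizing eigendirection of $F$ sees this finer scale, via the identity \eqref{omega_asympt}, which in turn relies on \eqref{<vRm'mURv>} and on the non-degeneracy \eqref{<>formula} ensuring that the $\mathcal{O}(|x-y|)$ term in $\omega$ has a non-vanishing coefficient. All other ingredients---the perturbative estimates \eqref{v,psi_pert}, \eqref{F_pert} needed to control $\omega$ to the relevant order, and the uniform $\ell^\infty$-bound \eqref{norm_R}---are already in hand, so the argument proceeds by carefully tracking which quantity picks up one versus two copies of $(1-S\m\other{\m})^{-1}$.
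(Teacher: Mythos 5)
The paper does not actually prove this claim --- it is imported verbatim from Propositions 6.2 and 6.6 of \cite{Landon2021Wignertype} --- so yours is a from-scratch argument. The overall architecture is right (trivial $\mathcal{O}(1)$ bounds when $\eta\other{\eta}>0$; two powers of $(1-S\m\other{\m})^{-1}$ for $\mathcal{K}$ versus one for $\partial_z\mathcal{L}_{\log}$, with the rank-one destabilizing direction needed only for the latter to gain the $|\re z-\re\zeta|$ in the denominator), but two steps as written are not correct. First, the ``standard estimate'' $|\Tr[XY]|\lesssim \opnorminf{X}\cdot N\cdot N^{-1}\opnorminf{Y}$ is false as a general trace inequality (take $X=Y=I$: the left side is $N$, the right side is $1$). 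What actually saves the bound on $\partial_\zeta\Tr[(\m'/\m)(1-S\m\other{\m})^{-1}]$ is the explicit factor $S$ with $\max_{jk}|S_{jk}|\lesssim N^{-1}$: after cycling the trace one uses $|\Tr[MS]|\le\sum_{j,k}|M_{jk}||S_{kj}|\le N\max_{jk}|S_{jk}|\,\opnorminf{M}\lesssim\opnorminf{M}$, and then Proposition \ref{Smm_prop} gives $(|\eta|+|\other{\eta}|)^{-2}$. You should state the inequality in this form; your version only works by accident of the $S$ being there.

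Second, your explicit decomposition of $(1-S\m\other{\m})^{-1}$ is algebraically wrong: $\frac{1}{\omega}\vv\vv^*+R(1-\vv\vv^*)$ does not invert $\mathrm{U}^*-F=R^{-1}-\eigF_1\vv\vv^*$ (multiply out and the cross terms $-\eigF_1\vv\vv^*R(1-\vv\vv^*)$ and $\frac{1}{\omega}R^{-1}\vv\vv^*$ do not cancel). The correct identity is the Sherman--Morrison form already used in \eqref{K_rep},
\begin{equation*}
(\mathrm{U}^*-F)^{-1}=R+\frac{\eigF_1}{\omega}\,R\vv\vv^*R,\qquad \omega=1-\eigF_1\langle\vv,R\vv\rangle,
\end{equation*}
conjugated by the diagonal matrices $|\m\other{\m}|^{\pm1/2}$ and $\mathrm{U}$ via \eqref{Smm F identity}. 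With that substitution the rest of your argument goes through: the $R$ part of $\Tr[(1-S\m\other{\m})^{-1}S\m'\other{\m}]$ is $\mathcal{O}(1)$ by \eqref{norm_R} and the $\Tr[MS]$ bound, the rank-one part is a scalar of size $\mathcal{O}(|\omega|^{-1})$, and \eqref{omega_asympt}--\eqref{omega_lower_bound} give $|\omega|\gtrsim|\eta|+|\other{\eta}|+|\re z-\re\zeta|$. One residual point to address: \eqref{omega_asympt} and \eqref{omega_lower_bound} are stated for the symmetric configuration $z=x+\I\eta$, $\zeta=y-\I\eta$, whereas the claim allows $|\eta|\neq|\other{\eta}|$; you need a short perturbation in the imaginary parts (using \eqref{v,psi_pert}, \eqref{F_pert}) to cover the general case.
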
 
\begin{proof}[Proof of Lemma \ref{V_good_formula}]
	Define $\Omega_* := \{z\in \mathbb{C}: 1>|\im z| > \eta_*\}$. Recall the definition of $V(f)$ from \eqref{variance_V}. First, we prove that 
	\begin{equation} \label{V(f)_ultra_local}
		V(f) = \frac{1}{\pi^2}\int\limits_{\Omega_*}\int\limits_{\Omega_*} \frac{\partial \other f(\zeta)}{\partial \bar \zeta}\frac{\partial \other f(z)}{\partial \bar z} \mathcal{K}(z,\zeta)\mathrm{d}\bar\zeta\mathrm{d}\zeta \mathrm{d}\bar z\mathrm{d}z + \bigO{N^{-\twoalp}}.
	\end{equation}
	It follows from \eqref{QA_f} that
	\begin{equation} \label{quasi_analytic_deriv}
		\frac{\partial\other{f}}{\partial\bar{z}} = \frac{1}{2}\biggl(- \eta \chi'(\eta)  f'(x)  + i \bigl( \eta \chi(\eta)f''(x) + \chi'(\eta) f(x)\bigr) \biggr).
	\end{equation}
	Moreover, for all $z$ with $|\im z|< 1/2$, \eqref{quasi_analytic_deriv} and the properties of $\chi$ in \eqref{QA_f} imply
	\begin{equation}\label{quasi_f_partial_small_eta}
		\frac{\partial \other f}{\partial \bar z} = \frac{i\im z}{2}f''(\re z) .
	\end{equation}
	
	Let $V_*(f)$ denote the integral on right hand side of \eqref{V(f)_ultra_local}, and define $\eta_1 := N^{-\alp}\eta_0$. It follows from the first inequality in \eqref{K_L'_bounds}, and \eqref{quasi_f_partial_small_eta} that  
	\begin{equation} \label{V-V_*}
		|V(f)-V_*(f)|\lesssim \iint\limits_{\mathbb{R}^2}\left|f''(x)f''(y)\right|\mathrm{d}x\mathrm{d}y\int\limits_{\eta_*}^{\eta_1}\int\limits_{\eta_*}^{2\eta_1} \frac{\eta\other{\eta}}{(\eta +\other{\eta})^2}\mathrm{d}\other{\eta}\mathrm{d}\eta.
	\end{equation}
	Note that $\eta\other{\eta} \le (\eta + \other{\eta})^2/4$, hence the integral over $\mathrm{d}\other{\eta}\mathrm{d}\eta$ is bounded by $\eta_1^2/2$, and since $\norm{f''}_1 \sim \eta_0^{-1}$, \eqref{V(f)_ultra_local} is established.
	
	We write $z:= x+\I\eta, \zeta := y+\I\other{\eta}$ and plug \eqref{L''=K} into the expression \eqref{V(f)_ultra_local} for $V(f)$. Using the fact that $\partial_z u = -i\partial_\eta u$ for any holomorphic function $u(z)$, and integrating by parts in $\eta$, we obtain
	\begin{equation} \label{by_parts_in_eta}
		\begin{split}
			V(f) =& \frac{\I}{\pi^2}\iint\limits_{\mathbb{R}^2}\mathrm{d}x\mathrm{d}y\int\limits_{|\other{\eta}|>\eta_*} \frac{\partial \other f(\zeta)}{\partial \bar \zeta}\int\limits_{|\eta|>\eta_*} \frac{\partial^2 \other f(z)}{\partial \eta\partial \bar z} \frac{\partial}{\partial\zeta} \mathcal{L}(z,\zeta) \mathrm{d}\other{\eta}\mathrm{d}\eta\\
			&-\frac{\I}{\pi^2}\iint\limits_{\mathbb{R}^2}\mathrm{d}x\mathrm{d}y\int\limits_{|\other{\eta}|>\eta_*} \frac{\partial \other f(\zeta)}{\partial \bar \zeta}\sum\limits_{\eta=\pm\eta_*} \frac{\partial \other f}{\partial \bar z}(x+\I\eta) \frac{\partial}{\partial\zeta} \mathcal{L}(z,\zeta)\mathrm{d}\other{\eta}+ \bigO{N^{-\twoalp}}.
		\end{split}
	\end{equation}
	The second estimate in \eqref{K_L'_bounds}, expression \eqref{quasi_analytic_deriv} and the estimates $\norm{f''}_1 \sim \eta_0^{-1}, \norm{f'}_1\sim 1, \norm{f}_1\sim\eta_0$ imply that the boundary term in \eqref{by_parts_in_eta} is dominated by $\Oprec(\eta_*\eta_0^{-2})$, which is smaller than $\bigO{N^{-\twoalp}}$.
	
	Similarly, integrating the first term on the right hand side of \eqref{by_parts_in_eta} by parts in $\other{\eta}$ we get
	\begin{equation} \label{by_parts_in_eta'}
		\begin{split}
			V(f) =& -\frac{1}{\pi^2}\int\limits_{\Omega_*}\int\limits_{\Omega_*}\frac{\partial^2 \other f(z)}{\partial \bar z\partial \eta} \frac{\partial^2 \other f(\zeta)}{\partial \bar \zeta\partial\other{\eta}} \mathcal{L}(z,\zeta) \mathrm{d}\bar\zeta\mathrm{d}\zeta \mathrm{d}\bar z\mathrm{d}z\\
			&+\frac{1}{\pi^2}\iint\limits_{\mathbb{R}^2}\mathrm{d}x\mathrm{d}y\int\limits_{|\eta|>\eta_*}  \frac{\partial^2 \other f(z)}{\partial \eta\partial \bar z}\sum\limits_{\other\eta=\pm\eta_*} \frac{\partial \other f}{\partial \bar \zeta}(y+\I\other{\eta}) \mathcal{L}(z,y+\I\other{\eta})\mathrm{d}\eta+ \bigO{N^{-\twoalp}}.
		\end{split}
	\end{equation}
	It follows from \eqref{L_bound} and the expression \eqref{quasi_analytic_deriv} that the boundary term (the second line of \eqref{by_parts_in_eta'}) is again dominated by $\Oprec(N^{-\twoalp})$.
	
	We apply Stokes' theorem to \eqref{by_parts_in_eta'} twice: once in $z$ and once in $\zeta$. Considering that $\partial_\eta \other{f}(z)$ vanishes on the boundary of $\Omega_*$ except for the lines $\{\im z = \pm\eta_*\}$, this results in
	\begin{equation} \label{V(f)_f'f'_form}
		\begin{split}
			V(f) =& \frac{1}{4\pi^2}\iint\limits_{\mathbb{R}^2}\sum\limits_{\eta,\other{\eta} = \pm\eta_*}\sign\left(\eta\other{\eta}\right)\frac{\partial \other f(x+\I\eta)}{\partial \eta} \frac{\partial \other f(y+\I\other{\eta})}{\partial\other{\eta}} \mathcal{L}(x+\I\eta,y+\I\other{\eta}) \mathrm{d}x\mathrm{d}y + \bigO{N^{-\twoalp}}\\
			=&-\frac{1}{2\pi^2}\iint\limits_{\mathbb{R}^2}f'(x)f'(y)\other{\mathcal{L}}(x,y) \mathrm{d}x\mathrm{d}y + \bigO{N^{-\twoalp}},
		\end{split}
	\end{equation}  
	where 
	\begin{equation}
		\other{\mathcal{L}}(x,y):=\re\left[\mathcal{L}(x+\I\eta_*,y+\I\eta_*)-\mathcal{L}(x+\I\eta_*,y-\I\eta_*)\right]
	\end{equation}
	 We restrict the integrations in \eqref{V(f)_f'f'_form} to $[E_0-\eps, E_0+\eps]$, since this interval contains the support of $f$. Furthermore, for all $y\in\supp{f}$, $y-E_0\lesssim \eta_0$, hence $|y-E_0\pm\eps|\sim 1$. By symmetry of $\mathcal{L}(z,\zeta)$, and the second estimate in \eqref{K_L'_bounds} it follows that  
	\begin{equation} \label{boundary_L'_bound}
		\frac{\partial}{\partial y}\other{\mathcal{L}}(E_0\pm\eps,y) \lesssim 1, \quad y \in \supp{f}.
	\end{equation} 
	We write $f'(y) = \partial_y\left(f(y)-f(x)\right)$, perform integration by parts in $y$ and integrate the boundary term by parts in $x$ to obtain
	\begin{equation} \label{by_parts_in_y}
		\begin{split}
			V(f)
			%&-\frac{1}{2\pi^2}\int\limits_\mathbb{R} f'(x)f(x) \left(\other{\mathcal{L}}(x,E_0+\eps) -\other{\mathcal{L}}(x,E_0-\eps) \right) \mathrm{d}x + \bigO{N^{-\twoalp}}\\
			=&\frac{1}{2\pi^2}\int\limits_{E_0-\eps}^{E_0+\eps}\int\limits_{E_0-\eps}^{E_0+\eps}f'(x)\left(f(y)-f(x)\right)\frac{\partial}{\partial y} 
			\other{\mathcal{L}}(x,y)
			\mathrm{d}x\mathrm{d}y\\
			&+\frac{1}{4\pi^2}\int\limits_{E_0-\eps}^{E_0+\eps} (f(x))^2 \frac{\partial}{\partial x}\left(\other{\mathcal{L}}(x,E_0+\eps) -\other{\mathcal{L}}(x,E_0-\eps) \right) \mathrm{d}x + \bigO{N^{-\twoalp}}.
		\end{split}
	\end{equation}
	Since $\norm{f}_2^2 \lesssim \eta_0$,  it follows from \eqref{boundary_L'_bound}  that the second integral in \eqref{by_parts_in_y} is $\bigO{\eta_0}$. 
	Similarly, integrating \eqref{by_parts_in_y} by parts in $x$ and using \eqref{by_parts_in_y} to substitute one of the emerging itegrals for $-V(f) + \bigO{N^{-\twoalp}+\eta_0}$, we get
	\begin{equation} \label{by_parts_in_x}
		\begin{split}
			2V(f)
			=&\frac{1}{2\pi^2}\int\limits_{E_0-\eps}^{E_0+\eps}\int\limits_{E_0-\eps}^{E_0+\eps}\left(f(y)-f(x)\right)^2\frac{\partial^2}{\partial x\partial y} 
			\other{\mathcal{L}}(x,y)
			\mathrm{d}x\mathrm{d}y
			+ \bigO{\eta_0+N^{-\twoalp }},
		\end{split}
	\end{equation}
	where we again used \eqref{boundary_L'_bound} to estimate the boundary term. 
	For any holomorphic function $u(z)$ of $z = x + i\eta$, we have $\partial_x u = \re[\partial_z u]$, hence $\partial_x\partial_y\other{\mathcal{L}}(x,y) = \re\left[\mathcal{K}(x+\I\eta_*,y+\I\eta_*)-\mathcal{K}(x+\I\eta_*,y-\I\eta_*)\right]$.
	
	Finally, in view of in view of the first estimate in \eqref{K_L'_bounds}, $\partial_z\partial_\zeta \mathcal{L}_{log}(x+\I\eta_*,y+\I\eta_*) \lesssim 1$, so its contribution is also bounded by $\Oprec(\eta_0\norm{g}^2_2 + \eta_0^2\norm{g}_1^2 )$.  Moreover, it follows from the last estimate in \eqref{K_small_part} that we can replace $\mathcal{K}(x+\I\eta_*,y-\I\eta_*)$ by $\partial_z\partial_\zeta \mathcal{L}_{log}(x+\I\eta_*,y-\I\eta_*)$, since the contribution of the remaining terms is bounded by $\Oprec(\eta_0\norm{g}^2_2 + \eta_0^2\norm{g}_1^2 )$. This concludes the proof of Lemma \ref{V_good_formula}.
\end{proof}

Once Lemma \ref{V_good_formula} is established, we can follow the method of Lemma 6.7 in \cite{Landon2021Wignertype} to finish the proof of Proposition \ref{main2}.

Fix $x,y \in [E_0-\eps, E_0+\eps]$ and write $z:= x + \I \eta_*$, $\zeta:= y - \I\eta_*$, as in \eqref{V_tilde_K}.
It follows from \eqref{Smm F identity} and \eqref{F=vv+A} that the kernel $\other{\mathcal{K}}(z,\zeta)$ can be written as
\begin{equation} \label{K_rep}
	\other{\mathcal{K}}(z,\zeta) = -2\re\Tr\biggl[\frac{\m'}{\m}\mathrm{U}^*\bigl(R + \frac{\eigF_1}{\omega} R\vv\vv^*R\bigr)F\frac{\other{\m}'}{\other{\m}}\bigl(R + \frac{\eigF_1}{\omega} R\vv\vv^*R\bigr) \biggr],
\end{equation}
where $\omega$ is defined in \eqref{omega_asympt}.
Expanding the brackets in \eqref{K_rep}, collecting like terms according to the powers of $\omega^{-1}$, and using the cyclic property of trace yields
\begin{equation} \label{K_leading}
	\other{\mathcal{K}}(z,\zeta) = -2\re\biggl[\frac{\eigF_1^2}{\omega^2} \bigl\langle \vv,R\frac{\m'}{\m}\mathrm{U}^* R\vv \bigr\rangle\bigl\langle\vv,RF\frac{\other{\m}'}{\other{\m}} R\vv\bigr\rangle\biggr] + \bigO{1+\omega^{-1}},
\end{equation}
since $\Tr\bigl[\frac{\m'}{\m}\mathrm{U}^*RF\frac{\other{\m}'}{\other{\m}}R\bigr]$, $\Tr\bigl[\frac{\m'}{\m}\mathrm{U}^*RF\frac{\other{\m}'}{\other{\m}}R\vv\vv^*R\bigr]$, and $\Tr\bigl[\frac{\m'}{\m}\mathrm{U}^*R\vv\vv^*RF\frac{\other{\m}'}{\other{\m}}R\bigr]$ are all $\mathcal{O}(1)$. The first scalar product in \eqref{K_leading} can be estimated using \eqref{<vRm'mURv>}.

We compute the second scalar product in \eqref{K_leading}. It follows from uniform bounds \eqref{m_bound} and \eqref{dm_bound} that $\lVert \m(z)-\m(\bar \zeta)\rVert_\infty \lesssim |x-y|$, and hence $\opnormtwo{\mathrm{U}(z,\zeta) -1} \lesssim |x-y|$. Together with estimates \eqref{F_pert} and \eqref{v,psi_pert}, this yields
\begin{equation} \label{second<>}
	\eigF_1\bigl\langle\vv,RF\frac{\other{\m}'}{\other{\m}}R\vv\bigr\rangle = \langle\vv(\zeta,\zeta),F(\zeta,\zeta)\frac{\other{\m}'}{\other{\m}}\vv(\zeta,\zeta)\bigr\rangle + \mathcal{O}(|x-y|),
\end{equation}
where we used the identity $R(\bar\zeta,\zeta)\vv(\zeta,\zeta) = (1-A(\zeta,\zeta))^{-1}\vv(\zeta,\zeta) = \vv(\zeta,\zeta)$.

It follows from the estimate on $\vv$ in \eqref{v,psi_pert} that $\norm{\vv(\zeta,\zeta) - \vv(y,y)}_2 \lesssim \eta_*$. Vector $\vv(y,y)$ is the $\ell^2$-normalization of $|\m(y)|^{-1}\im\m(y+\I 0)$, hence it satisfies $F(y,y) \vv(y,y) = \vv(y,y)$ by \eqref{VD_eq}. Therefore using \eqref{dm_bound} and the lower bound in \eqref{F_pert}, we obtain 
\begin{equation} \label{v_approx}
	\norm{F(\zeta,\zeta)\vv(\zeta,\zeta) - \vv(\zeta,\zeta)}_2 \lesssim \eta_*.
\end{equation}
Substituting \eqref{v_approx} into \eqref{second<>} yields
\begin{equation} \label{second<>est}
	\eigF_1\bigl\langle\vv,RF\frac{\other{\m}'}{\other{\m}}R\vv\bigr\rangle = \langle \vv(\zeta,\zeta),\frac{\other{\m}'}{\other{\m}}\vv(\zeta,\zeta)\bigr\rangle + \mathcal{O}(|x-y| + \eta_*),
\end{equation}
Combining \eqref{K_rep} with estimates \eqref{v,psi_pert}, \eqref{<vRm'mURv>}, \eqref{omega_lower_bound} and \eqref{second<>est} yield
\begin{equation} \label{K_estimate}
	\other{\mathcal{K}}(z,\zeta) = -2\re \left[\frac{\eigF_1(z,z)\eigF_1(\zeta,\zeta)}{\omega^2}\bigl\langle \vv(z,z)\frac{\m'}{\m}\vv(z,z)\bigr\rangle\langle \vv(\zeta,\zeta),\frac{\other{\m}'}{\other{\m}}\vv(\zeta,\zeta)\bigr\rangle\right] +  \mathcal{O}(1+\omega^{-1}).
\end{equation}
It follows by \eqref{<>formula} and \eqref{omega_asympt} that
\begin{equation} \label{K_limit}
	\lim\limits_{\eta_*\to +0} \other{\mathcal{K}}(x+\I\eta_*,y-\I\eta_*) = 2|x-y|^{-2} + \mathcal{O}(|x-y|^{-1}).
\end{equation}
Since $f\in C^2_c(\mathbb{R})$, \eqref{K_estimate} implies that the integrand in \eqref{V_tilde_K} is uniformly bounded in $\eta_* \in [0, N^{-100}]$. Therefore, we can take the limit $\eta_* \to 0$ in \eqref{V_tilde_K}, and apply the boundary estimate \eqref{K_limit} to obtain.
\begin{equation}
	V(f) = \frac{1}{2\pi^2}\iint\limits_{[E_0-\eps,E_0+\eps]^2}\frac{(f(x)-f(y))^2}{(x-y)^2}\mathrm{d}x\mathrm{d}y +\bigO{\eta_0\log N + N^{-\twoalp}},
\end{equation}
because the contribution of $\mathcal{O}(|x-y|^{-1})$ to the integral \eqref{V_tilde_K} is bounded by $\mathcal{O}(\eta_0\log N)$. 

Finally, the contribution of the regime $(x,y) \notin [E_0-\eps,E_0+\eps]^2$ to the integral
\begin{equation} \label{H1/2_norm_f_norm_g_int}
	\iint\limits_{\mathbb{R}^2}\frac{(f(x)-f(y))^2}{(x-y)^2}\mathrm{d}x\mathrm{d}y = \norm{f}_{\dot{H}^{1/2}}^2 = \norm{g}_{\dot{H}^{1/2}}^2,
\end{equation} is bounded by $\Oprec(\eta_0)$, therefore
\begin{equation}
	V(f) = \frac{1}{2\pi^2}\norm{g}_{\dot{H}^{1/2}}^2 +\bigO{\eta_0\log N+N^{-\twoalp}}.
\end{equation}
This concludes the proof of Proposition \ref{main2}. 

\appendix
\gdef\thesection{Appendix \Alph{section}}
\section{Proof of Lemma \ref{standard_estimate_lemma}} \label{App1}
\gdef\thesection{\Alph{section}}
We use the Helffer–Sjöstrand representation to express the linear eigenvalue statistics in terms of the resolvent of $H$ (see Section 4.2 in \cite{Landon2020applCLT} for references),
   \begin{equation} \label{1-expvTrf}
	\{1 -\Expv\}\left[\Tr f(H)\right] = \frac{1}{2\pi}\int\limits_{\mathbb{C}} \frac{\partial\other{f}}{\partial\bar{z}} \{1-\Expv\}\left[\Tr G(z)\right]\mathrm{d}\bar z \mathrm{d}z.
\end{equation}
The characteristic function $\phi$ then admits the form
\begin{equation} \label{phi_and_e}
	\phi(\lambda) = \E{e(\lambda)}, \quad e(\lambda) := \exp\biggl\{\I\lambda\frac{1}{2\pi} \int\limits_{\mathbb{C}}\frac{\partial \other{f}}{\partial \bar{z}}\{1-\Expv\}\left[\Tr G(z)\right] \mathrm{d}\bar z \mathrm{d}z \biggr\},\quad \lambda \in \mathbb{R},
\end{equation}
and its derivative $\phi'$ is given by
\begin{equation}  \label{phi' expr}
	\phi'(\lambda) = \Expv\biggl[e(\lambda) \frac{\I}{2\pi}\int\limits_{\mathbb{C}} \frac{\partial\other{f}}{\partial\bar{z}}
	\left\{1-\Expv\right\}\left[ \Tr G(z)\right] \mathrm{d}\bar z \mathrm{d}z\biggr],\quad \lambda \in \mathbb{R}.
\end{equation}
As observed in \cite{Landon2020applCLT}, the regime $|\im z| \le N^{-\alp}\eta_0$, referred to as the \textit{ultra-local scales}, does not contribute to the integrals in \eqref{phi_and_e} and \eqref{phi' expr}. This yields the estimates \eqref{phi'_Omega_int} (see equations (4.21) and (4.22) in \cite{Landon2020applCLT} for further detail).

It remains to show that \eqref{trace_fluctuation} holds. Applying the cumulant expansion formula \eqref{cumulant_formula} to the quantity $\E{\other e(\lambda)\left\{1-\Expv\right\}\left[G_{jj}(z)\right]}$ yields the following lemma.
\begin{lemma} (Lemma 5.7 in \cite{Landon2021Wignertype}) \label{lemma 1-E Gjj}
	For all $z \in \mathcal{D}$ defined in \eqref{D_def} and $j\in \{1,\dots,N\}$ we have
	\begin{equation} \label{1/mj(z) proposition}
		\begin{split}
			\frac{-1}{m_j(z)}\E{\other e(\lambda)\left\{1-\Expv\right\}\left[ G_{jj}(z)\right]}
			=& -m_j(z) \sum\limits_{k=1}^N S_{jk}\E{\other e(\lambda)\left\{1-\Expv\right\}\left[G_{kk}(z)\right]}\\
			&-\E{\other e(\lambda)\left\{1-\Expv\right\}\left[ T_{jj}(z,z)\right]}\\
			&+\Expv\biggl[\sum\limits_{k=1}^N S_{jk}G_{kj}(z) \frac{\partial\other e(\lambda)}{\partial H_{jk}}\biggr]\\
			&-\frac{1}{2}\sum\limits_{k=1}^N \Cmlnt^{(4)}_{jk}m_j(z)m_k(z)\Expv\biggl[\frac{\partial^2\other e(\lambda)}{\partial H_{jk}^2}\biggr]\\
			&+\Oprec\left((1+|\lambda|^4)\left(\Psi(z)\Theta(z) + N^{-1} \Psi(z)\eta_0^{-1/2}\right)\right),
		\end{split}
	\end{equation}
	where $\eta_0$ is from \eqref{scaled_f}, and for $a,b \in \{1,\dots, N\}$, $z,\zeta \in \mathbb{C}\backslash\mathbb{R}$, $T_{xy}(z,\zeta)$ is defined in \eqref{Tfunction}.
	%        \begin{equation} \label{Tfunction}
		%            T_{xy}(z,\zeta) := \sum_{j\neq y} S_{xj} G_{jy}(z)G_{yj}(\zeta).
		%        \end{equation}
\end{lemma}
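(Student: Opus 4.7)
The proof proceeds by combining the diagonal resolvent identity with a cumulant expansion in the spirit of \cite{Khorunzhy1999}. Since the vector Dyson equation \eqref{VD_eq} gives $z = -m_j^{-1} - (S\m)_j$, the $jj$-entry of $(H-z)G = I$ rearranges as
\begin{equation*}
-\frac{1}{m_j}G_{jj} = -1 + (S\m)_j G_{jj} + \sum_k H_{jk}G_{kj}.
\end{equation*}
After multiplying by $\other{e}(\lambda)$, applying $\{1-\Expv\}$, and taking expectation, the constant disappears and the work reduces to evaluating $\E{\other{e}\{1-\Expv\}[\sum_k H_{jk}G_{kj}]}$ via the cumulant formula \eqref{cumulant_formula} of Lemma \ref{cumulant_lemma} applied with $h=H_{jk}$ and $f(H_{jk}) = \other{e}(\lambda)G_{kj}(z)$, truncated at order $\ell=3$.

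The second cumulant, $c^{(2)}(H_{jk}) = S_{jk}$, produces $S_{jk}\E{\other{e}\{1-\Expv\}[\partial_{jk}G_{kj}]} + S_{jk}\E{\partial_{jk}\other{e}\cdot G_{kj}}$ after splitting by the Leibniz rule. The latter, summed in $k$, matches the third term of \eqref{1/mj(z) proposition} exactly. For the former, the real-symmetric identity $\partial_{jk}G_{kj} = -G_{kj}^2 - G_{kk}G_{jj}$ (valid for $j\neq k$, with the $j=k$ contribution of size $N^{-1}$ absorbed into the error by \eqref{cond_A}) splits $\sum_k S_{jk}\partial_{jk}G_{kj}$ into two pieces. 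The first is $-\sum_k S_{jk}G_{kj}^2 = -T_{jj}(z,z)$ by definition \eqref{Tfunction} (up to the negligible diagonal term). For the second, $-G_{jj}\sum_k S_{jk}G_{kk}$, I would expand both factors around their deterministic values using \eqref{law1}: the constant $m_j(S\m)_j$ vanishes under $\{1-\Expv\}$, the two linear-in-fluctuation cross terms produce exactly $-m_j\sum_k S_{jk}\{1-\Expv\}[G_{kk}]$ (the leading advertised contribution) and $-(S\m)_j\{1-\Expv\}[G_{jj}]$, and the residual bilinear fluctuation is $\Oprec(\Psi(z)\Theta(z))$ by pairing the isotropic bound on $G_{jj}-m_j$ with the averaged bound \eqref{law2} on $\sum_k S_{jk}(G_{kk}-m_k)$. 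The crucial algebraic observation is that the $-(S\m)_j\{1-\Expv\}[G_{jj}]$ piece exactly cancels the $+(S\m)_j\{1-\Expv\}[G_{jj}]$ term inherited from the initial Dyson-equation manipulation, yielding precisely the two leading terms on the right-hand side of \eqref{1/mj(z) proposition}.

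The fourth cumulant gives $\tfrac{1}{6}\Cmlnt^{(4)}_{jk}\E{\partial_{jk}^3(\other{e}G_{kj})}$. Expanding the triple derivative, the only summand that is not a priori subleading is the one in which all three derivatives land on $G_{kj}$; substituting each resolvent factor by its deterministic counterpart via \eqref{law1} and carrying out the combinatorics yields the advertised $-\tfrac{1}{2}\Cmlnt^{(4)}_{jk}m_jm_k\E{\partial_{jk}^2\other{e}}$ together with an $\Oprec(\Psi(z)\Theta(z))$ correction, while the mixed summands either carry an extra off-diagonal $G_{jk}$ factor (supplying an additional $\Psi$ gain) or load derivatives onto $\other{e}$ in combinations controlled by $|\lambda|^p N^{-3/2}$. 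The third cumulant contribution and the remainder $R_4$ from \eqref{cumulant_error} are bounded using the moment assumption \eqref{moment_condition}, the trivial resolvent bound $|G_{ab}|\le |\im z|^{-1}$, and the fact that each factor of $\partial_{jk}\other{e}$ contributes $\Oprec(|\lambda|N^{-1/2}\eta_0^{-1/2})$ in $L^\infty$, as follows from a direct bound on \eqref{tilde e} via the isotropic law \eqref{law2} together with the $\eta_0$-scaling of $\supp{\other{f}}$.

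The main obstacle I expect in executing this plan cleanly is the precise tracking of $|\lambda|$-powers against the $\Psi$/$\Theta$ improvements required to reach the stated error $\Oprec((1+|\lambda|^4)(\Psi(z)\Theta(z) + N^{-1}\Psi(z)\eta_0^{-1/2}))$: each differentiation of $\other{e}(\lambda)$ injects an additional factor of $|\lambda|$ that must be compensated by a local-law gain coming from the extra resolvent entry this differentiation brings in, and this balancing is what pins down the specific exponent $4$ on $|\lambda|$ (at most four applications of the chain rule through $\other{e}(\lambda)$ survive in the leading bookkeeping). Once this combinatorics is organized carefully, every discarded term falls into one of the two error channels appearing on the right-hand side of \eqref{1/mj(z) proposition}.
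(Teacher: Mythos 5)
The paper does not actually prove this lemma: it is imported verbatim as Lemma 5.7 of \cite{Landon2021Wignertype}, so there is no in-paper argument to compare against. Your outline follows the same route as the cited source (Dyson-equation rearrangement of $-G_{jj}/m_j$ plus a cumulant expansion of $\sum_k H_{jk}G_{kj}$), and the heart of it — the second-cumulant analysis producing $-T_{jj}(z,z)$, the linearization of $-G_{jj}\sum_kS_{jk}G_{kk}$ yielding $-m_j\sum_kS_{jk}\{1-\Expv\}[G_{kk}]$, the cancellation of the two $(S\m)_j\{1-\Expv\}[G_{jj}]$ terms, and the $\Oprec(\Psi\Theta)$ bound on the bilinear fluctuation via the isotropic and averaged laws \eqref{law2} — is correct.

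Two points do not survive scrutiny as written. First, your fourth-cumulant bookkeeping is misattributed: the summand of $\tfrac{1}{3!}\Cmlnt^{(4)}_{jk}\partial_{jk}^3(\other{e}\,G_{kj})$ that produces $-\tfrac12\Cmlnt^{(4)}_{jk}m_jm_k\,\partial^2_{jk}\other{e}$ is the one with \emph{two} derivatives on $\other{e}$ and one on $G_{kj}$ (the Leibniz factor $\binom{3}{2}=3$ against $1/3!$ gives $1/2$, and $\partial_{jk}G_{kj}=-(G_{kj}^2+G_{kk}G_{jj})\approx -m_jm_k$). The summand with all three derivatives on $G_{kj}$ contains no $\partial^2\other{e}$ at all; its deterministic leading part $-6m_j^2m_k^2$ is killed by $\{1-\Expv\}$ and the rest is subleading. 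Second, the third-cumulant terms and the remainder cannot be closed with the trivial bound $|G_{ab}|\le|\im z|^{-1}$: on the domain $\mathcal{D}$ this costs up to $N^{1-\tau}$ per resolvent entry and destroys the estimate for small $|\im z|$. One must use the local laws ($G_{aa}=\Oprec(1)$, $G_{ab}=\Oprec(\Psi)$ off-diagonal, stable under the sup over $|H_{jk}|\le M$ in \eqref{cumulant_error}) together with the smallness \eqref{tilde_e_partial_jk_estimate} of $\partial_{jk}\other{e}$. Even so, the third-cumulant summand with both derivatives on $G_{kj}$ contains $\tfrac12\sum_k c^{(3)}_{jk}\,\E{\other{e}\{1-\Expv\}[G_{kk}G_{jj}G_{kj}]}$-type contributions whose naive size is $N\cdot N^{-3/2}\cdot\Psi=N^{-1/2}\Psi$, which exceeds the stated error $\Psi\Theta+N^{-1}\Psi\eta_0^{-1/2}$ once $|\im z|\gg N^{-1/2}$; beating this requires exploiting further cancellation (a secondary expansion of $\E{\other{e}\{1-\Expv\}[G_{kj}]}$, or the covariance structure of $\other{e}$ with a single off-diagonal entry), which your proposal does not supply.
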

Let $\flucG_j := \E{\other e(\lambda)\left\{1-\Expv\right\}\left[ G_{jj}(z)\right]}$ and let $\flucRHS_j$ denote the right-hand side of \eqref{1/mj(z) proposition} without the first term, then \eqref{1/mj(z) proposition} reads $\left[\left(1-S\m^2(z)\right)\flucG\right]_j = -m_j(z)\flucRHS_j$. The operator $\left(1-S\m^2(z)\right)$ can be inverted to deduce that $\flucG_j = -\left[\left(1-S\m^2(z)\right)^{-1}\m(z)\flucRHS\right]_j$, where $\m(z)$ is interpreted as a multiplication operator acting on the vector $\flucRHS$. Summing over $j$, we obtain 
\begin{equation} \label{fluct_tr_G}
	\E{\other e(\lambda)\left\{1-\Expv\right\}\left[ \Tr G(z)\right]} = \sum\limits_{j=1}^N\flucG_j = -\sum\limits_{j,k=1}^N\left[\left(1-S\m^2(z)\right)^{-1}\right]_{jk}m_k(z)\flucRHS_{\,k} = -\sum\limits_{j=1}^N \frac{m_j'(z)}{m_j(z)}\flucRHS_j,
\end{equation}
where in the last step we applied the identity $\m'(z)/\m^2(z) = (1-S\m^2(z))^{-1}\vect{1}$. The second term on the right-hand side of \eqref{1/mj(z) proposition} contributes the first term to the right hand side of \eqref{trace_fluctuation}, which, as we show in Section \ref{Curly T section}, is negligible. Therefore, it suffices to estimate the contribution of the third and fourth terms on the right-hand side. The necessary estimates on the partial derivatives of $\other{e}(\lambda)$ are collected in the following lemma.
\begin{lemma} (Lemma 5.6 in \cite{Landon2021Wignertype}) \label{tilde_e_partial_lemma}
	For all $j,k \in \{1,\dots,N\}$ we have
	\begin{equation} \label{tilde_e_partial_formula}
		\frac{\partial\other e(\lambda)}{\partial H_{jk}} = -\frac{\I\lambda}{\pi}\frac{2}{1+\delta_{jk}}\other e(\lambda) \int\limits_{\dom'}\frac{\partial\other{f}}{\partial\bar\zeta} \frac{\partial G_{kj}(\zeta)}{\partial \zeta} \mathrm{d}\bar \zeta \mathrm{d}\zeta.
	\end{equation}
	Moreover, for all $p \in \mathbb{N}$, the following bound holds
	\begin{equation} \label{tilde_e_partial_universal_bound}
		\biggl|\frac{\partial^p\other{e}(\lambda)}{\partial H_{jk}^p} \biggr| = \Oprec\bigl((1+|\lambda|)^p\bigr),
	\end{equation}
	and for $k\neq j$
	\begin{equation} \label{tilde_e_partial_jk_estimate}
		\left|\frac{\partial\other e(\lambda)}{\partial H_{jk}}\right| = \Oprec\bigl(N^{-1/2}(1+|\lambda|)\eta_0^{-1/2}\bigr).
	\end{equation}
	Second derivatives with $k\neq j$ are given by
	\begin{equation} \label{tilde_e_partial2_formula}
		\frac{\partial^2\other e(\lambda)}{\partial H_{jk}^2} = \frac{2\I\lambda}{\pi}\other e(\lambda) \int\limits_{\dom'}\frac{\partial\other{f}}{\partial\bar\zeta} \frac{\partial\left\{ m_j(\zeta)m_k(\zeta)\right\}}{\partial \zeta} \mathrm{d}\bar \zeta \mathrm{d}\zeta + \Oprec\bigl(N^{-1/2}(1+|\lambda|)^2\eta_0^{-1/2}\bigr).
	\end{equation}
\end{lemma}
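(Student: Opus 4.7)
The formula \eqref{tilde_e_partial_formula} follows by direct chain-rule differentiation of the exponential in \eqref{tilde e}. Since $\Expv[\Tr G(z)]$ is deterministic, its partial derivative vanishes, so $\partial_{H_{jk}}\{1-\Expv\}[\Tr G(z)] = \partial_{H_{jk}}\Tr G(z)$. Applying the resolvent identity $\partial_{H_{jk}} G = -G(\partial_{H_{jk}}H)G$ together with the symmetry constraint on $H$ (coupling $H_{jk}$ and $H_{kj}$ for $j\neq k$) gives $\partial_{H_{jk}}\Tr G(z) = -\frac{2}{1+\delta_{jk}}(G^2)_{jk}(z) = -\frac{2}{1+\delta_{jk}}\partial_z G_{kj}(z)$, using $G^T = G$ and $\partial_z G = G^2$. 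Substitution yields \eqref{tilde_e_partial_formula}.

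For the universal bound \eqref{tilde_e_partial_universal_bound}, first note that $|\other{e}(\lambda)|\equiv 1$: by the Helffer-Sj\"ostrand representation \eqref{1-expvTrf} its exponent equals $\I\lambda(\Tr f(H) - \Expv\Tr f(H))$, which is purely imaginary. Higher derivatives are controlled by induction on $p$: each additional $\partial_{H_{jk}}$ either produces a new factor of $\frac{\I\lambda}{\pi}\int_{\dom'}\partial_{\bar\zeta}\other{f}\cdot\partial_\zeta^{(s)}G_{\cdot\cdot}(\zeta)d\bar\zeta d\zeta$ (from hitting $\other{e}$) or adds an extra spectral derivative to an existing resolvent factor. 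Each such integral is $\Oprec(1)$ by combining the entrywise local law \eqref{law1} with the Cauchy estimate $|\partial_\zeta^{(s)}G_{\cdot\cdot}|\prec |\im\zeta|^{-s+1}$ and the explicit bounds on $|\partial_{\bar\zeta}\other{f}|$ from \eqref{QA_f}, integrated over the regime $|\eta|\ge N^{-\alp}\eta_0$. The factor $(1+|\lambda|)^p$ accumulates from the $p$ powers of $\lambda$ produced by the Leibniz rule.

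The improved off-diagonal bound \eqref{tilde_e_partial_jk_estimate} is the main obstacle, as it requires more than pointwise control on $G_{kj}$. Since $|\other{e}|\equiv 1$, by \eqref{tilde_e_partial_formula} it suffices to show $\bigl|\int_{\dom'}\partial_{\bar\zeta}\other{f}\cdot\partial_\zeta G_{kj}\,d\bar\zeta d\zeta\bigr|\prec N^{-1/2}\eta_0^{-1/2}$ for $k\neq j$. The plan is to interpret this integral, modulo ultra-local boundary terms on $\{|\im\zeta|\le N^{-\alp}\eta_0\}$, as a Helffer-Sj\"ostrand representation of $(f'(H))_{kj}$. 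Formally, the spectral decomposition $\partial_\zeta G_{kj}(\zeta) = \sum_\alpha u_\alpha(k)u_\alpha(j)(\lambda_\alpha-\zeta)^{-2}$ combined with the derivative Helffer-Sj\"ostrand identity $\frac{1}{\pi}\int_{\mathbb{C}}\partial_{\bar\zeta}\other{f}(\zeta)(\lambda-\zeta)^{-2}d\bar\zeta d\zeta = 2\I f'(\lambda)$ recovers $(f'(H))_{kj} = \sum_\alpha f'(\lambda_\alpha)u_\alpha(k)u_\alpha(j)$ up to a universal constant. The required estimate on $(f'(H))_{kj}$ for $k\neq j$ reflects square-root cancellation: roughly $N\eta_0$ eigenvalues lie in $\supp f'$, each contributing a term of size $N^{-1}\eta_0^{-1}$ with effectively random signs from delocalized eigenvector components, yielding a sum of order $\sqrt{N\eta_0}\cdot N^{-1}\eta_0^{-1} = N^{-1/2}\eta_0^{-1/2}$. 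Rigorously, this is proved by estimating the $2p$-th moment of the integral via the cumulant expansion formula of Lemma \ref{cumulant_lemma}, where the off-diagonal structure ($k\neq j$) produces the crucial cancellations through the entrywise local law \eqref{law1}. The ultra-local boundary contribution is separately controlled using $|\partial_\zeta G_{kj}|\prec |\im\zeta|^{-1}\Psi(\zeta)$ (Cauchy's formula applied to the off-diagonal local law) integrated against $|\partial_{\bar\zeta}\other{f}|\lesssim |\eta|\eta_0^{-1}$ over $\{|\eta|\le N^{-\alp}\eta_0\}$.

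For the second-derivative formula \eqref{tilde_e_partial2_formula} with $k\neq j$, apply $\partial_{H_{jk}}$ to \eqref{tilde_e_partial_formula}. Two contributions arise. First, $\partial_{H_{jk}}$ acting on $\other{e}(\lambda)$ produces $(\partial_{H_{jk}}\other{e})\cdot(\text{integral in \eqref{tilde_e_partial_formula}})$; by two applications of \eqref{tilde_e_partial_jk_estimate} this is $\Oprec(N^{-1}(1+|\lambda|)^2\eta_0^{-1})$, absorbed in the claimed error since $N^{-1}\eta_0^{-1}\le N^{-1/2}\eta_0^{-1/2}$ under $\eta_0\ge N^{-1+\varepsilon_0}$. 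Second, $\partial_{H_{jk}}$ acting on $\partial_\zeta G_{kj}$ inside the integral: the resolvent identity for $k\neq j$ gives $\partial_{H_{jk}}G_{kj} = -G_{kj}^2 - G_{kk}G_{jj}$, hence $\partial_{H_{jk}}\partial_\zeta G_{kj}(\zeta) = -\partial_\zeta\bigl[G_{kj}^2(\zeta) + G_{kk}(\zeta)G_{jj}(\zeta)\bigr]$. The entrywise local law \eqref{law1} replaces $G_{kk}G_{jj}$ by $m_k(\zeta)m_j(\zeta)$ with fluctuation error $\prec\Psi(\zeta)$, and $G_{kj}^2\prec\Psi^2(\zeta)$; substitution yields the main term $\frac{2\I\lambda}{\pi}\other{e}(\lambda)\int_{\dom'}\partial_{\bar\zeta}\other{f}\cdot\partial_\zeta\{m_j(\zeta)m_k(\zeta)\}d\bar\zeta d\zeta$ displayed in \eqref{tilde_e_partial2_formula}, while the fluctuation errors, applied to the integrals via Lemma \ref{int_lemma}, produce the claimed $\Oprec(N^{-1/2}(1+|\lambda|)^2\eta_0^{-1/2})$ remainder.
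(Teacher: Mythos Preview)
The paper does not prove this lemma itself, deferring to Lemma~5.6 of \cite{Landon2021Wignertype}; so there is no in-paper proof to compare against directly. That said, two points in your argument deserve comment.

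First, a minor correction: $|\other{e}(\lambda)|$ is not identically $1$. The exponent in \eqref{tilde e} is an integral over $\dom'$, not over all of $\mathbb{C}$, so it differs from the Helffer--Sj\"ostrand representation of $\Tr f(H)-\Expv\Tr f(H)$ by the ultra-local contribution. What is true, and sufficient for everything you do, is $|\other{e}(\lambda)|\prec 1$; the paper makes exactly this point when it bounds $|\other{e}(\lambda)|$ via the first line of \eqref{phi'_Omega_int}.

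Second, and more substantively, your argument for \eqref{tilde_e_partial_jk_estimate} takes a detour that is both harder and less rigorous than the route the paper's own machinery suggests. You propose to reinterpret the integral as $(f'(H))_{kj}$ modulo ultra-local terms, then appeal to ``square-root cancellation'' from delocalized eigenvectors, to be made rigorous by an unspecified $2p$-th moment cumulant expansion. None of this is needed. For $k\neq j$ the entrywise local law \eqref{law1} already gives $|G_{kj}(\zeta)|\prec\Psi(\zeta)$; Lemma~\ref{derivative_lemma} then yields $|\partial_\zeta G_{kj}(\zeta)|\prec|\im\zeta|^{-1}\Psi(\zeta)\lesssim N^{-1/2}|\im\zeta|^{-3/2}$ in the bulk, and Lemma~\ref{int_lemma} with $s=3/2$ immediately gives
\[
\int_{\dom'}\Bigl|\frac{\partial\other{f}}{\partial\bar\zeta}\Bigr|\,|\partial_\zeta G_{kj}(\zeta)|\,\mathrm{d}\bar\zeta\,\mathrm{d}\zeta \prec N^{-1/2}\eta_0^{-1/2}.
\]
This is precisely the mechanism the paper employs one lemma later, in the proof of Lemma~\ref{tilde_e_partial_1} (see the derivation of \eqref{tilde_e_partial_jj_estimate}), and it closes \eqref{tilde_e_partial_jk_estimate} in two lines. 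Your remaining arguments for \eqref{tilde_e_partial_formula}, \eqref{tilde_e_partial_universal_bound}, and \eqref{tilde_e_partial2_formula} are essentially correct and follow the expected pattern.
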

The form in which we write the error terms in Lemmas \ref{lemma 1-E Gjj} and \ref{tilde_e_partial_lemma} slightly differs from their original form in \cite{Landon2021Wignertype} because we have already applied the estimate $\norm{f''}_1\sim \eta_0^{-1}$. The leading term in \eqref{tilde_e_partial2_formula} results in the third line of \eqref{trace_fluctuation}.

Using Lemmas \ref{tilde_e_partial_lemma} and \ref{int_lemma} we proceed to estimate the third term on the right hand side of \eqref{1/mj(z) proposition}.
\begin{lemma} (c.f. Equation (5.65) of Lemma 5.8 in \cite{Landon2021Wignertype}) \label{tilde_e_partial_1}
	For all $z \in \mathcal{D}$ defined in \eqref{D_def} and all $j\in \{1,\dots,N\}$ we have
	\begin{equation} \label{tilde_e_partial_1_eq}
		\begin{split}
			\Expv\biggl[\sum\limits_{k=1}^N S_{jk}G_{kj}(z) \frac{\partial\other e(\lambda)}{\partial H_{jk}}\biggr] =& -\frac{2\I\lambda}{\pi} \Expv\biggl[\other e(\lambda) \int\limits_{\dom'} \frac{\partial\other{f}}{\partial\bar\zeta} \frac{\partial T_{jj}(z,\zeta)}{\partial\zeta} \mathrm{d}\bar \zeta \mathrm{d}\zeta\biggr] \\
			&-\frac{\I\lambda}{\pi} S_{jj} \E{\other e(\lambda)} \int\limits_{\dom'} \frac{\partial\other{f}}{\partial\bar\zeta} m_j'(\zeta) m_j(z) \mathrm{d}\bar \zeta \mathrm{d}\zeta
			+\Oprec\biggl(\frac{\Psi(z)(1+|\lambda|)}{N\eta_0^{1/2}}\biggr).
		\end{split}
	\end{equation}
\end{lemma}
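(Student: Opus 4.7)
\textbf{Proof plan for Lemma \ref{tilde_e_partial_1}.} The natural starting point is to substitute formula \eqref{tilde_e_partial_formula} for $\partial\other e(\lambda)/\partial H_{jk}$ into the left-hand side. Since $\other e(\lambda)$ does not depend on $k$, I can pull the $k$-summation inside the $\dom'$ integration to get
\begin{equation*}
	\sum_{k=1}^N S_{jk}G_{kj}(z)\frac{\partial\other e(\lambda)}{\partial H_{jk}} = -\frac{2\I\lambda}{\pi}\other e(\lambda)\int\limits_{\dom'}\frac{\partial\other f}{\partial\bar\zeta}\frac{\partial}{\partial\zeta}\biggl(\sum_{k=1}^N\frac{S_{jk}}{1+\delta_{jk}}G_{kj}(z)G_{kj}(\zeta)\biggr)\mathrm{d}\bar\zeta\mathrm{d}\zeta.
\end{equation*}
For real symmetric $H$ the resolvent entries satisfy $G_{kj}=G_{jk}$, so the definition \eqref{Tfunction} of $T(z,\zeta)$ identifies the inner sum as $T_{jj}(z,\zeta)+\tfrac{1}{2}S_{jj}G_{jj}(z)G_{jj}(\zeta)$. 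Taking expectations, the $T_{jj}$ piece yields the first term on the right-hand side of \eqref{tilde_e_partial_1_eq}, and the residual diagonal contribution becomes
\begin{equation*}
	-\frac{\I\lambda}{\pi}S_{jj}\Expv\biggl[\other e(\lambda)\int\limits_{\dom'}\frac{\partial\other f}{\partial\bar\zeta}G_{jj}(z)\partial_\zeta G_{jj}(\zeta)\mathrm{d}\bar\zeta\mathrm{d}\zeta\biggr],
\end{equation*}
which I must show equals $-\frac{\I\lambda}{\pi}S_{jj}\E{\other e(\lambda)}\int_{\dom'}(\partial_{\bar\zeta}\other f)\,m_j(z)m_j'(\zeta)\mathrm{d}\bar\zeta\mathrm{d}\zeta$ up to the claimed error.

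To achieve this replacement I would decompose
\begin{equation*}
	G_{jj}(z)\partial_\zeta G_{jj}(\zeta)-m_j(z)m_j'(\zeta) = \bigl(G_{jj}(z)-m_j(z)\bigr)\partial_\zeta G_{jj}(\zeta)+m_j(z)\bigl(\partial_\zeta G_{jj}(\zeta)-m_j'(\zeta)\bigr).
\end{equation*}
The first factor of the first summand is $\Oprec(\Psi(z))$ by the isotropic local law \eqref{law1}, while $\partial_\zeta G_{jj}(\zeta)=\mathcal{O}(|\im\zeta|^{-1})$ from the crude bound $\|G(\zeta)\|\le|\im\zeta|^{-1}$. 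For the second summand I apply Lemma \ref{derivative_lemma} to the holomorphic function $G_{jj}-m_j$, promoted to stochastic domination on a disk of radius $|\im\zeta|/2$ by the standard net-and-union-bound argument, obtaining $\partial_\zeta G_{jj}(\zeta)-m_j'(\zeta)\prec\Psi(\zeta)/|\im\zeta|$. Using $|\other e(\lambda)|\le 1$ and $S_{jj}\lesssim N^{-1}$, the total error is dominated by
\begin{equation*}
	\frac{|\lambda|}{N}\int\limits_{\dom'}\bigl|\partial_{\bar\zeta}\other f\bigr|\Bigl(\Psi(z)|\im\zeta|^{-1}+\Psi(\zeta)|\im\zeta|^{-1}\Bigr)\mathrm{d}\bar\zeta\mathrm{d}\zeta.
\end{equation*}
Now Lemma \ref{int_lemma} applies with $s=1$ to the $\Psi(z)$-term, yielding $|\lambda|N^{-1}\Psi(z)\log N$, and with $s=3/2$ to the $\Psi(\zeta)$-term (using the bulk bound $\Psi(\zeta)\lesssim(N|\im\zeta|)^{-1/2}$), yielding $|\lambda|N^{-3/2}\eta_0^{-1/2}\log N$. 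Both contributions are absorbed into $\Oprec(\Psi(z)(1+|\lambda|)/(N\eta_0^{1/2}))$ using $\Psi(z)\gtrsim N^{-1/2}$ together with the fact that $\log N\le N^\varepsilon$ for arbitrarily small $\varepsilon>0$ in the stochastic-domination sense.

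The principal technical point is the uniform propagation of the pointwise local law \eqref{law1} to the $\zeta$-derivative of $G_{jj}(\zeta)$ via Lemma \ref{derivative_lemma}, which must hold simultaneously for all $\zeta\in\dom'$ so that the subsequent integration against $\partial_{\bar\zeta}\other f$ delivers the sharp $\eta_0^{-1/2}$-gain. Everything else amounts to an algebraic rearrangement followed by the integration-by-parts estimates already established in Lemma \ref{int_lemma}.
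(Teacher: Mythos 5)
Your proposal is correct and follows essentially the same route as the paper: isolate the off-diagonal part of the $k$-sum as $T_{jj}(z,\zeta)$, replace $\partial_\zeta G_{jj}(\zeta)$ by $m_j'(\zeta)$ via Lemma \ref{derivative_lemma} and $G_{jj}(z)$ by $m_j(z)$ via the local law \eqref{law1}, and control the resulting errors with Lemma \ref{int_lemma} (the paper uses $s=3/2$ for the $\Psi(\zeta)$ piece, exactly as you do). One minor caveat: the bound $\partial_\zeta G_{jj}(\zeta)=\Oprec(|\im\zeta|^{-1})$ does not follow from the crude operator-norm bound $\opnormtwo{G(\zeta)}\le|\im\zeta|^{-1}$ alone (that only gives $|\im\zeta|^{-2}$, which is insufficient for the claimed error); it requires the local law, e.g.\ the same application of Lemma \ref{derivative_lemma} you invoke for the second summand together with $\norm{\m'}_\infty\lesssim 1$ from \eqref{dm_bound}.
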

\begin{proof}[Proof of Lemma \ref{tilde_e_partial_1}]
	In view of \eqref{Tfunction}, multiplying \eqref{tilde_e_partial_formula} by $S_{jk}G_{kj}(z)$, summing over $k \neq j$ and taking expectations gives the first term on the right hand side of \eqref{1/mj(z) proposition}.
	For the remaining $k=j$ term, observe that the function $K(\zeta) := G_{jj}(\zeta) - m_j(\zeta)$ is analytic in $\mathbb{C}\backslash\mathbb{R}$ and is stochastically dominated by $\Psi(\zeta)$ in $\mathcal{D}$.
	Applying Lemma \ref{derivative_lemma} with $p=1$ to $K(\zeta)$, we obtain
	\begin{equation} \label{partial_G_jj_local_law}
		\frac{\partial G_{jj}(\zeta)}{\partial\zeta} = m_j'(\zeta) + \Oprec\bigl(|\im\zeta|^{-1}\Psi(\zeta)\bigr).
	\end{equation}
	Plugging \eqref{partial_G_jj_local_law} into \eqref{tilde_e_partial_formula} with $k=j$ and applying Lemma \ref{int_lemma} with $K(\zeta) := \partial_\zeta G_{jj}(\zeta) - m_j'(\zeta)$ with $s=3/2$, we get
	\begin{equation} \label{tilde_e_partial_jj_estimate}
		\frac{\partial\other e(\lambda)}{\partial H_{jj}} = -\frac{\I\lambda}{\pi}\other e(\lambda) \int\limits_{\dom'}\frac{\partial\other{f}}{\partial\bar\zeta} m_j'(\zeta) \mathrm{d}\bar \zeta \mathrm{d}\zeta + \Oprec\bigl(1+|\lambda|)N^{-1/2}\eta_0^{-1/2}\bigr).
	\end{equation}
	where we used the the fact that $|e(\lambda)| = 1$ and the first line of \eqref{phi'_Omega_int} to bound $|\other{e}(\lambda)|$ by $\Oprec(1)$.
	Multiplying \eqref{tilde_e_partial_jj_estimate} by $S_{jj}G_{jj}(z)$ and using the local law \eqref{law1} to estimate $G_{jj}(z)$ gives the second term on the right hand side of \eqref{1/mj(z) proposition}. Application of the local law \eqref{law1} is justified by \eqref{tilde_e_partial_universal_bound} with $p=1$. This concludes the proof of Lemma \ref{tilde_e_partial_1}.
\end{proof}  
Summing up the leading terms in \eqref{tilde_e_partial_1_eq} results in the second and third terms on the right-hand side of \eqref{trace_fluctuation}.
Collecting all the error terms, the estimate in \eqref{trace_fluctuation} now follows from  \eqref{m2S_bound}, \eqref{fluct_tr_G}, \eqref{tilde_e_partial_universal_bound} \eqref{tilde_e_partial2_formula} and Lemma \ref{tilde_e_partial_1}. This concludes the proof of Lemma \ref{standard_estimate_lemma}.

% Unfinished Appendix B
%\input{AppendixB.tex}

\printbibliography

@article{Li2021genWigner,
	author = {Yiting Li and Yuanyuan Xu},
	title = {{On fluctuations of global and mesoscopic linear statistics of generalized Wigner matrices}},
	volume = {27},
	journal = {Bernoulli},
	publisher = {Bernoulli Society for Mathematical Statistics and Probability},
	pages = {1057--1076},
	keywords = {central limit theorem, generalized Wigner matrix, Linear eigenvalue statistics},
	year = {2021}
}

@article{Landon2020applCLT,
	author = {Benjamin Landon and Philippe Sosoe},
	title = {{Applications of mesoscopic CLTs in random matrix theory}},
	volume = {30},
	journal = {The Annals of Applied Probability},
	publisher = {Institute of Mathematical Statistics},
	pages = {2769--2795},
	keywords = {mesoscopic linear statistics, Random matrix theory, Universality},
	year = {2020}
}

@incollection {Erdoes2019dyson,
	AUTHOR = {Erd\H{o}s, L\'{a}szl\'{o}},
	TITLE = {The matrix {D}yson equation and its applications for random
	matrices},
	BOOKTITLE = {Random matrices},
	SERIES = {IAS/Park City Math. Ser.},
	VOLUME = {26},
	PAGES = {75--158},
	YEAR = {2019},
}

@misc{Landon2021Wignertype,
      title={Single eigenvalue fluctuations of general {W}igner-type matrices}, 
      author={Benjamin Landon and Patrick Lopatto and Philippe Sosoe},
      year={2021},
      eprint={arXiv:2105.01178}
}

@article{Lytova2009indCLT,
	year = {2009},
	publisher = {Institute of Mathematical Statistics},
	volume = {37},
	pages = {1778-1840},
	author = {Lytova, Anna and Pastur, Leonid},
	title = {Central limit theorem for linear eigenvalue statistics of random matrices with independent entries},
	journal = {The Annals of Probability}
}

@article{Ajanki2015Wignertype,
	title={Universality for general {W}igner-type matrices},
	author={Oskari Ajanki and L{\'a}szl{\'o} Erdős and Torben Kr{\"u}ger},
	journal={Probability Theory and Related Fields},
	year={2015},
	volume={169},
	pages={667--727}
}

@article{He2017WignerCLT,
	author = {He, Yukun  and Knowles, Antti},
	title = {{Mesoscopic eigenvalue statistics of Wigner matrices}},
	volume = {27},
	journal = {The Annals of Applied Probability},
	publisher = {Institute of Mathematical Statistics},
	pages = {1510--1550},
	keywords = {linear statistics, mesoscopic eigenvalue distribution, Universality, Wigner matrices},
	year = {2017}
}

@article {Khorunzhy1999,
	AUTHOR = {{Boutet de Monvel}, Anne Marie and Khorunzhy, Alexei},
	TITLE = {Asymptotic distribution of smoothed eigenvalue density. {I}.
	{G}aussian random matrices},
	JOURNAL = {Random Oper. Stochastic Equations},
	FJOURNAL = {Random Operators and Stochastic Equations},
	VOLUME = {7},
	YEAR = {1999},
	PAGES = {1--22}
}

@article{Boutet1999,
	title = {{Asymptotic distribution of smoothed eigenvalue density. II. Wigner random matrices}},
	AUTHOR = {{Boutet de Monvel}, Anne Marie and Khorunzhy, Alexei},
	pages = {149--168},
	volume = {7},
	JOURNAL = {Random Oper. Stochastic Equations},
	FJOURNAL = {Random Operators and Stochastic Equations},
	year = {1999}
}

@article{Khorunzhy1996,
	year = {1996},
	publisher = {{AIP} Publishing},
	volume = {37},
	pages = {5033--5060},
	author = { Khorunzhy, Alexei and Khoruzhenko, Boris and Pastur, Leonid},
	title = {Asymptotic properties of large random matrices with independent entries},
	journal = {Journal of Mathematical Physics}
}

@article{Ajanki2019QVE,
	year = 2019,
	publisher = {American Mathematical Society ({AMS})},
	volume = {261},
	number = {1261},
	author = {Oskari Ajanki and L{\'{a}}szl{\'{o}} Erd{\H{o}}s and Torben Krüger},
	title = {Quadratic Vector Equations On Complex Upper
	Half-Plane},
	journal = {Memoirs of the American Mathematical
	Society}
}

@article{Wigner1957,
	author = {Wigner,Eugene},
	journal = {Annals of Mathematics},
	number = {2},
	pages = {203--207},
	publisher = {Annals of Mathematics},
	title = {{Characteristics Vectors of Bordered Matrices with Infinite Dimensions II}},
	volume = {65},
	year = {1957}
}

@article{Bao2016,
	author = {Bao, Zhigang and Xie, Junshan},
	title = {{CLT for Linear Spectral Statistics of Hermitian Wigner Matrices with General Moment Conditions}},
	journal = {Theory of Probability \& Its Applications},
	volume = {60},
	number = {2},
	pages = {187--206},
	year = {2016}
}

@article{Shcherbina2011,
	author = {Shcherbina, Mariya},
	title = {Central Limit Theorem for linear eigenvalue statistics of the Wigner and sample covariance random matrices},
	journal = {Zh. Mat. Fiz. Anal. Geom.},
	volume = {7},
	pages = {176--192},
	year = {2011}
}

@article{Sosoe2013,
	title = {{Regularity conditions in the CLT for linear eigenvalue statistics of Wigner matrices}},
	journal = {Advances in Mathematics},
	volume = {249},
	pages = {37--87},
	year = {2013},
	author = { Sosoe, Philippe and  Wong, Percy}
}

@misc{Landon2022,
	author = {{Landon}, Benjamin and {Sosoe}, Philippe},
	title = "{Almost-optimal bulk regularity conditions in the CLT for Wigner matrices}",
	keywords = {Mathematics - Probability},
	year = {2022},
	eprint = {arXiv:2204.03419}
}

@article{Johansson1998,
	author = {Johansson, Kurt},
	title = {{On fluctuations of eigenvalues of random Hermitian matrices}},
	volume = {91},
	journal = {Duke Mathematical Journal},
	publisher = {Duke University Press},
	pages = {151--204},
	year = {1998}
}

@article{Bao2020freesum,
	author = {Bao, Zhigang and Schnelli, Kevin and Xu, Yuanyuan},
	title = "{{Central Limit Theorem for Mesoscopic Eigenvalue Statistics of the Free Sum of Matrices}}",
	journal = {International Mathematics Research Notices},
	volume = {2022},
	number = {7},
	pages = {5320--5382},
	year = {2020}
}

@article{Li2021deformed,
	author = {Li, Yiting and Schnelli, Kevin and Xu, Yuanyuan},
	title = {{Central limit theorem for mesoscopic eigenvalue statistics of deformed Wigner matrices and sample covariance matrices}},
	volume = {57},
	journal = {Annales de l'Institut Henri Poincaré, Probabilités et Statistiques},
	publisher = {Institut Henri Poincaré},
	pages = {506--546},
	year = {2021}
}

@article{Ajanki2017SingularitiesQVE,
	author = {Ajanki, Oskari and Krüger, Torben and Erd\H{o}s, L\'{a}szl\'{o}},
	title = {{Singularities of Solutions to Quadratic Vector Equations on the Complex Upper Half-Plane}},
	journal = {Communications on Pure and Applied Mathematics},
	volume = {70},
	pages = {1672--1705},
	year = {2017}
}

@article {He2019Diffusion,
	AUTHOR = {He, Yukun and Marcozzi, Matteo},
	TITLE = {Diffusion profile for random band matrices: a short proof},
	JOURNAL = {J. Stat. Phys.},
	FJOURNAL = {Journal of Statistical Physics},
	VOLUME = {177},
	YEAR = {2019},
	PAGES = {666--716}
}

@article{Bai2005,
	author = {Zhidong Bai and Yao, Jian-Feng},
	title = {{On the convergence of the spectral empirical process of Wigner matrices}},
	volume = {11},
	journal = {Bernoulli},
	publisher = {Bernoulli Society for Mathematical Statistics and Probability},
	pages = {1059--1092},
	keywords = {central limit theorem, Linear spectral statistics, Random matrix, Spectral distribution, Wigner matrices},
	year = {2005}
}

@article{Erdos2013LocalLaw,
	author = {Erd\H{o}s, L{\'a}szl{\'o} and Knowles, Antti  and Yau, Horng-Tzer and Yin, Jun},
	title = {{The local semicircle law for a general class of random matrices}},
	volume = {18},
	journal = {Electronic Journal of Probability},
	number = {none},
	publisher = {Institute of Mathematical Statistics and Bernoulli Society},
	pages = {1--58},
	year = {2013}
}

@article{Erdos2013DiffProfile,
	author = {Erd\H{o}s, L{\'a}szl{\'o} and Knowles, Antti and Yau, Horng-Tzer and Yin, Jun},
	year = {2013},
	pages = {367-–416},
	title = {{Delocalization and Diffusion Profile for Random Band Matrices}},
	volume = {323},
	journal = {Communications in Mathematical Physics}
}

@article{Erdos2013Fluctuations,
	author = {Erd\H{o}s, L{\'a}szl{\'o} and Knowles, Antti and Yau, Horng-Tzer},
	year = {2013},
	pages = {1837--1926},
	title = {Averaging Fluctuations in Resolvents of Random Band Matrices},
	volume = {14},
	journal = {Annales Henri Poincaré}
}

\end{document}